\newtheorem{cor}{Corollary}[section]
\newtheorem{defi}[cor]{Definition}
\newtheorem{prop}[cor]{Proposition}
\newtheorem{teo}[cor]{Theorem}
\newtheorem{rem}[cor]{Remark}
\newtheorem{prob}[cor]{Problem}
\newtheorem{ass}[cor]{Assumption}
\def\R{\mathbb{R}}
\def\mbN{\mathbb{N}}
\def\pd{\partial}
\def\eoc{{\rm EOC}}
\def\eocs{{\rm EOCs}}
\def\triod{\mathcal{T}}
\def\domain{\Omega}
\def\interpol{I_{h}}
\def\dim{{\rm dim}}
\def\cond{{\rm cond}}
\def\diag{{\rm diag}}
\numberwithin{equation}{section}
\title{A converging finite element scheme for motion by curvature of a network with a triple junction}
\author{Paola Pozzi 
\thanks{Fakult\"at f\"ur Mathematik, Universit\"at Duisburg-Essen, 
Thea-Leymann-Stra\ss e 9, 
45127 Essen, Germany, 
\url{paola.pozzi@uni-due.de}} 
\and 
and Bj\"orn Stinner 
\thanks{Mathematics Institute, 
Zeeman Building, 
University of Warwick, 
Coventry CV4 7AL, 
United Kingdom, 
\url{bjorn.stinner@warwick.ac.uk}} 
}
\begin{document}

\maketitle

\begin{abstract}
 A new semi-discrete finite element scheme for the evolution of three parametrised curves by curvature flow that are connected by a triple junction is presented and analysed. In this triple junction, conditions are imposed on the angles at which the curves meet. One of the key problems in analysing motion of networks by curvature law is the choice of a tangential velocity that allows for motion of the triple junction, does not lead to mesh degeneration, and is amenable to an error analysis. Our approach consists in considering a perturbation of a classical smooth formulation. The problem we propose admits a natural variational formulation that can be discretized with finite elements. The  perturbation can be made arbitrarily small when a regularisation parameter shrinks to zero. Convergence of the new scheme including optimal error estimates are proved. These results are supported by some numerical tests. Finally, the influence of the small regularisation parameter on the properties of scheme and the accuracy of the results is numerically investigated. 
\end{abstract}

\bigskip

\noindent \textbf{MSC(2010):} 65M12, 65M15, 65M60 

\bigskip

\noindent \textbf{Keywords:} 
curve shortening flow, 
network,
triod, 
Herring's condition, 
Young's law, 
semi-discrete scheme
 
\section{Introduction}

We numerically study the planar evolution of networks formed by curves that move by curvature flow and that meet in triple junctions at prescribed angles. Such or related problems occur in applications in materials science (evolution of grain boundaries between crystalline phases, for instance, see \cite{TayCahHan1992}) or in fluids (equilibria in multi-phase flow, for instance, see \cite{Cah1977}). The focus here is on evolving \emph{triods} formed by three curves, each with one fixed end point and connected to a mobile triple junction with the other end point. Curvature flow refers to the law that the normal velocity $V^{(i)}$ of each curve in its (unit) normal direction $\nu^{(i)}$ coincides with its curvature $H^{(i)}$ with respect to the orientation defined by the unit normal, 
\begin{equation} \label{eq:curvflow}
 V^{(i)} = H^{(i)}, \quad i=1,2,3. 
\end{equation}
In the triple junction, the condition
\begin{equation} \label{eq:forcebal}
 \sum_{i=1}^{3} \tau^{(i)} = 0
\end{equation}
is imposed, where $\tau^{(i)}$ is the unit tangent vector of curve $i$ pointing into the curve. This condition can be interpreted as a force balance and is known as \emph{Herring's condition} in materials science \cite{Her1951,PriYu1994} and as \emph{Young's law} in fluids \cite{You1805,BroGarSto1998}. Here, it implies that the curves form angles of $120^\circ$ at the triple junction. 

Curvature flow is driven by the length functional. Denoting a regular parametrisation of a curve by $\tilde{u} : [0,1] \to \R^2$, this functional reads 
\[
 \tilde{E}(\tilde{u}) = \int_{0}^{1} | \tilde{u}_{x} | dx. 
\]
The curve may now be deformed in any direction $\tilde{\phi} : [0,1] \to \R^2$. The variation of the functional in this direction is 
\begin{multline} \label{eq:var_functional}
 \langle \tilde{E}'(\tilde{u}), \tilde{\phi} \rangle = \int_{0}^{1} \frac{\tilde{u}_{x}}{| \tilde{u}_{x} |} \cdot \tilde{\phi}_{x} dx \\
 = \frac{\tilde{u}_{x}}{| \tilde{u}_{x} |} \cdot \tilde{\phi} \Big{|}_{0}^{1} - \int_{0}^{1} \frac{1}{| \tilde{u}_{x} |} \Big{(} \frac{\tilde{u}_{x}}{| \tilde{u}_{x} |} \Big{)}_{x} \cdot \tilde{\phi} \,| \tilde{u}_{x} | dx = \tilde{\tau} \cdot \tilde{\phi} \Big{|}_{0}^{1} - \int_{0}^{1} \tilde{\kappa} \cdot \tilde{\phi} \,| \tilde{u}_{x} | dx, 
\end{multline}
where $\tilde{\tau} = \tilde{u}_{x} / | \tilde{u}_{x} |$ is a unit tangent field and $\tilde{\kappa} = (\tilde{u}_{x} / | \tilde{u}_{x} |)_{x} / | \tilde{u}_{x} |= \tilde{H} \tilde{\nu}$ is the curvature vector. Curvature flow \eqref{eq:curvflow} can be formulated as the gradient flow with respect to the $L^2$~inner product on the curve, which here is the $L^2$ inner product on the reference domain with weighting $|\tilde{u}_{x}|$. For a closed curve, \eqref{eq:var_functional} yields the variational formulation 
\begin{equation} \label{eq:gradflow_Dzi}
 \int_{0}^{1} \tilde{u}_{t} \cdot \tilde{\phi} | \tilde{u}_{x} | + \frac{\tilde{u}_{x}}{| \tilde{u}_{x} |} \cdot \tilde{\phi}_{x} dx = 0. 
\end{equation}
The velocity of the parametrisation then satisfies 
\begin{equation} \label{eq:csf}
 \tilde{u}_{t} = \frac{1}{| \tilde{u}_{x} |} \Big{(} \frac{\tilde{u}_{x}}{| \tilde{u}_{x} |} \Big{)}_{x} = \tilde{\kappa} 
\end{equation}
and is in purely in normal direction, i.e., it realises the geometric evolution \eqref{eq:curvflow} without any tangential velocity contributions. 

At first view, the variation \eqref{eq:var_functional} also looks attractive for the triod case. Summing up the boundary terms for three curves yields the angle condition \eqref{eq:forcebal}, which then naturally is satisfied in a variational formulation obtained by summing up \eqref{eq:gradflow_Dzi}. However, the purely normal velocity implies that the triple junction then is immobile. In fact, if moved in the normal direction with respect to one of the curves then, due to the angle condition \eqref{eq:forcebal}, the movement would involve tangential components with respect to the other two curves, but which is incompatible with \eqref{eq:csf}. 

Analytical studies of networks thus resort to parametrisations that realise \eqref{eq:curvflow} but also allow for tangential velocity components. A popular choice is \cite{ManNovPluSch2016}
\begin{equation} \label{eq:csf2}
 \tilde{u}_{t} | \tilde{u}_{x} |^2 = \tilde{u}_{xx}. 
\end{equation}
This is a gradient flow of the Dirichlet energy $\int | \tilde{u}_{x} |^2 / 2 dx$ with respect to an $L^2$ inner product with weighting $| \tilde{u}_{x} |^2$. It can be interpreted as a reparametrisation of the curves by solving a harmonic map flow for the tangential movement, see \cite{EllFri2016} for a presentation and in-depth analysis of the procedure. 
The analytical study of networks moving according to \eqref{eq:csf2} is treated for instance in the survey \cite{ManNovPluSch2016}, where questions such as existence, uniqueness, singularity formation and behaviour of the flow are discussed in detail. 
It turns out that this idea is also beneficial for numerical simulations. 

But let us first get back to \eqref{eq:csf}. Based on the variational formulation \eqref{eq:gradflow_Dzi}, a linear finite element scheme was proposed in \cite{Dzi1991} (and, thanks to an intrinsic formulation on evolving triangulations, even for closed surfaces). Convergence was proved for the semi-discrete scheme for curves in \cite{Dzi1994} where the key challenge was to control the length element $| \tilde{u}_{x} |$. The scheme mimics the geometric evolution in that also the vertices, i.e., the images of the mesh nodes on $[0,1]$ under the piecewise linear finite element solution, move approximately in normal direction. In the long term, in general, the length element will thus evolve strong discrepancies. Vertices will accumulate in some places while, elsewhere, segments between vertices may be stretched. Whilst this might be acceptable to some extent for simulations of closed curves, redistribution of the vertices in tangential direction is mandatory in the case of triods for the same reasons as in the continuum case, namely, to compensate for tangential movements of the triple junction. 

The idea of using \eqref{eq:csf2} instead to simulate curves forming networks was picked up in \cite{BroWet1993}. Finite difference techniques were used for the PDE and the triple junction condition \eqref{eq:forcebal}. Whilst the schemes behaved well in practice, convergence was investigated numerically only. In \cite{DecDzi1994} a finite element method based on \eqref{eq:csf2} for closed curves was presented. Convergence of the semi-discrete scheme was proved using a fixed point argument. But using \eqref{eq:csf2} to develop a finite element scheme for a triod is not straightforward because of the angle condition \eqref{eq:forcebal}. In fact, if three curves $\tilde{u}^{(i)}$, $i=1,2,3$, move by \eqref{eq:csf2} whilst forming a triple junction then a natural boundary condition in that triple junction reads $\sum_{i} \tilde{u}^{(i)}_{x} = 0$ rather than \eqref{eq:forcebal}, which can be written as $\sum_{i} \tilde{u}^{(i)}_{x} / |\tilde{u}^{(i)}_{x}| = 0$.

The idea of our approach is to use \eqref{eq:csf} for the movement in normal direction and to realise the triple junction condition, and then to combine it with \eqref{eq:csf2} scaled with a small parameter $\epsilon>0$ for some tangential movement, where the scaling serves to ensure that the impact on the geometric evolution and the triple junction condition is small. More precisely, instead of $\tilde{u}_{t}$ we consider $(\tilde{u}_{t} \cdot \tilde{\nu}) \tilde{\nu}$ in \eqref{eq:gradflow_Dzi} and $(\tilde{u}_{t} \cdot \tilde{\tau}) \tilde{\tau}$ in \eqref{eq:csf2}. Formulating the latter weakly and accounting for the scaling with $\epsilon>0$, the weak formulation for a single curve then reads
 \begin{equation} \label{eq:var_single}
 \int_{\domain} \big{(} (\tilde{u}_{t} \cdot \tilde{\nu}) (\tilde{\nu} \cdot \tilde{\varphi}) |\tilde{u}_{x}| + \tilde{\tau} \cdot \tilde{\varphi}_{x} \big{)} dx + \epsilon \int_{\domain} \big{(} (\tilde{u}_{t} \cdot \tilde{\tau}) (\tilde{\tau} \cdot \tilde{\varphi}) |\tilde{u}_{x}|^{2} + \tilde{u}_{x} \cdot \tilde{\varphi}_{x} \big{)} dx = 0. 
 \end{equation}
 This can now be extended to three curves forming a triod. See Problem \ref{prob_cont} for a complete formulation of the variational problem including initial and boundary conditions, which is at the centre of our numerical approach. Regarding the corresponding strong problem we refer to \eqref{PDEu} for the evolution law of the curves and to \eqref{JPC} for the triple junction condition. Observe that the curves satisfy \eqref{eq:curvflow} and \eqref{eq:forcebal} up to terms scaling with $\epsilon$.

Variational problems of a form similar to \eqref{eq:var_single} are amenable to a discretisation with piecewise linear conforming finite elements as $\tilde{\tau}$ and $\tilde{\nu}$ involve first spatial derivatives of $\tilde{u}$ only. Our \emph{main result} is a convergence proof of the thus obtained semi-discrete finite element scheme. In Theorem \ref{th_conv} we show linear and, thus, optimal convergence of the error in the $L^\infty(L^2)$ norm of the first spatial derivative and in the $L^2(L^2)$ norm of the velocity.

For the proof the procedure in \cite{DecDzi1994} was followed, where convergence of a semi-discrete finite element scheme for \eqref{eq:csf2} is shown in the case of a single closed curve. A fixed point map is constructed and analysed that satisfies a desired error estimate. It benefits from the linearity of the second-order spatial differential operator (diffusion term) in \eqref{eq:csf2}. The non-linearity of the diffusion term in \eqref{eq:gradflow_Dzi} and \eqref{eq:var_single} required significant adaptations from our part. Further extensions of the arguments were due to the splitting of the velocity into a normal and a tangential part. Our error estimates depend in an unfavourable way on $\epsilon$, the generic constants scale with $\epsilon^{-1}$. We found that, in practice, the method works well for small values on coarse meshes. The impact of the $\epsilon$ was quantitatively assessed in numerical simulations. We report on numerical convergence results as $\epsilon \to 0$ and on the conditioning of the system matrix.

Harmonic maps to ensure a good distribution of vertices also underpin the ideas in \cite{BarGarNue2007,BarGarNue2011}. Their fully discrete schemes generally have good stability properties and variationally satisfy the triple junction condition, whilst convergence hasn't been proved yet. For other, more recent computational approaches and ideas centred around goal-oriented r-adaptivity for geometric evolution problems of single curves or surfaces we refer to \cite{BalMik2011,MikRemSarSev2014,MacNolRowIns2019}. But we are not aware of any work that addresses convergence of schemes (in a parametric setting) for evolving networks with triple junctions subject to \eqref{eq:curvflow} and \eqref{eq:forcebal}, our result for a semi-discrete scheme seems the first. For completeness, let us also briefly mention interface capturing approaches that avoid the need of look after the mesh quality. Such approaches comprise phase field models \cite{BroRei1993,BreMas2017} and level set methods \cite{MerBenOsh1994,SmiSolCho2002}, for an overview we refer to \cite{DecDziEll2005}. 

In the following section we precisely define evolving triods and formulate the continuum problem that we intend to approximate, and we also clarify the requirements on the solution. After, we present the finite-element scheme. Section \ref{sec:convergence} contains the convergence analysis and the main result, Theorem \ref{th_conv}. In the last section we discretise in time and report on several numerical tests that corroborate our theoretical findings. We also report on the influence of the small parameter $\epsilon$ and display the effectiveness of the scheme for challenging initial data. 

\bigskip
\noindent \textbf{Acknowledgements:} This project was supported by the Deutsche Forschungsgemeinschaft (DFG, German Research Foundation), Projektnummer 404870139, and by the Engineering and Physical Sciences Research Council (EPSRC, United Kingdom), grant no EP/K032208/1. The second author would like to thank the Isaac Newton Institute for Mathematical Sciences, Cambridge, for support and hospitality during the programme \emph{Geometry, compatibility and structure preservation in computational differential equations}, where work on this paper was undertaken.

\section{Continuum problem, triod evolution}

In the formulation of the problem we omit any in-depth discussion of the appropriate function spaces. Typically, one would show short-time existence by applying Solonnikov theory and a fixed point argument in parabolic H\"older spaces, see for instance \cite{BroRei1993}, \cite{ManNovPluSch2016}. Since this is outside of the scope of this paper we henceforth assume the existence of a sufficiently smooth solution on some time interval. In Assumption~\ref{ass_cont_sol} below we list the regularity assumptions that we need for the error analysis. 

\begin{defi} \label{def:triod}
Given three fixed points $P_{i} \in \R^{2}$, $i=1,2,3$, a triod is the union of three curves $u^{(i)}: \domain \to \R^{2}$, $\domain = [0,1]$ connecting a joint starting point with the points $P_{i}$. More precisely, we denote this set of triods by 
\begin{align*}
 \triod_{P} := \{ \, \Gamma = (u^{(1)},u^{(2)},u^{(3)}) \, | \, & u^{(i)} \in W^{1,2}(\domain,\R^{2}) \text{ regular almost everywhere,} \\
 & u^{(i)}(1) = P_{i}, \quad i=1,2,3, \\
 & u^{(1)}(0) = u^{(2)}(0) = u^{(3)}(0) \, \}.
\end{align*}
For some small positive $\epsilon \leq \tfrac{1}{2}$, the energy associated with a triod $\Gamma \in \triod_{P}$ is given by 
\begin{align*}
 E(\Gamma) = \sum_{i=1}^{3} E_{\epsilon}(u^{(i)}), \quad \mbox{where } E_{\epsilon}(u^{(i)}) = \int_{\domain} \Big{(} |u^{(i)}_{x}| + \frac{\epsilon}{2} |u^{(i)}_{x}|^{2} \Big{)} dx.
\end{align*}
Given three triods $\Gamma = \{ u^{(1)},u^{(2)},u^{(3)} \}$, $\Upsilon = \{ v^{(1)},v^{(2)},v^{(3)} \}$, and $\Sigma = \{ w^{(1)},w^{(2)},w^{(3)} \} \in \triod_{P}$ we define 
\begin{align*}
 \langle \Upsilon, \Sigma \rangle_{\Gamma} := \sum_{i=1}^{3} \langle v^{(i)}, w^{(i)} \rangle_{u^{(i)}}
\end{align*}
where 
\begin{align}\label{innerprod}
 \langle v^{(i)}, w^{(i)} \rangle_{u^{(i)}} := \int_{\domain} \big{(} (v^{(i)} \cdot \nu^{(i)}) (w^{(i)} \cdot \nu^{(i)}) |u^{(i)}_{x}| + \epsilon (v^{(i)} \cdot \tau^{(i)}) (w^{(i)} \cdot \tau^{(i)}) |u^{(i)}_{x}|^{2} \big{)} dx
\end{align}
is a weighted $L^{2}$ inner product, and where we used the notation
\[
 \tau^{(i)} = \frac{u^{(i)}_{x}}{|u^{(i)}_{x}|} \qquad \mbox{ and } \qquad \nu^{(i)} = (\tau^{(i)})^\perp = \frac{(u^{(i)}_{x})^{\perp}}{|u^{(i)}_{x}|}.
\]
\end{defi}

\begin{figure}
 \begin{center}
 \includegraphics[width=7cm]{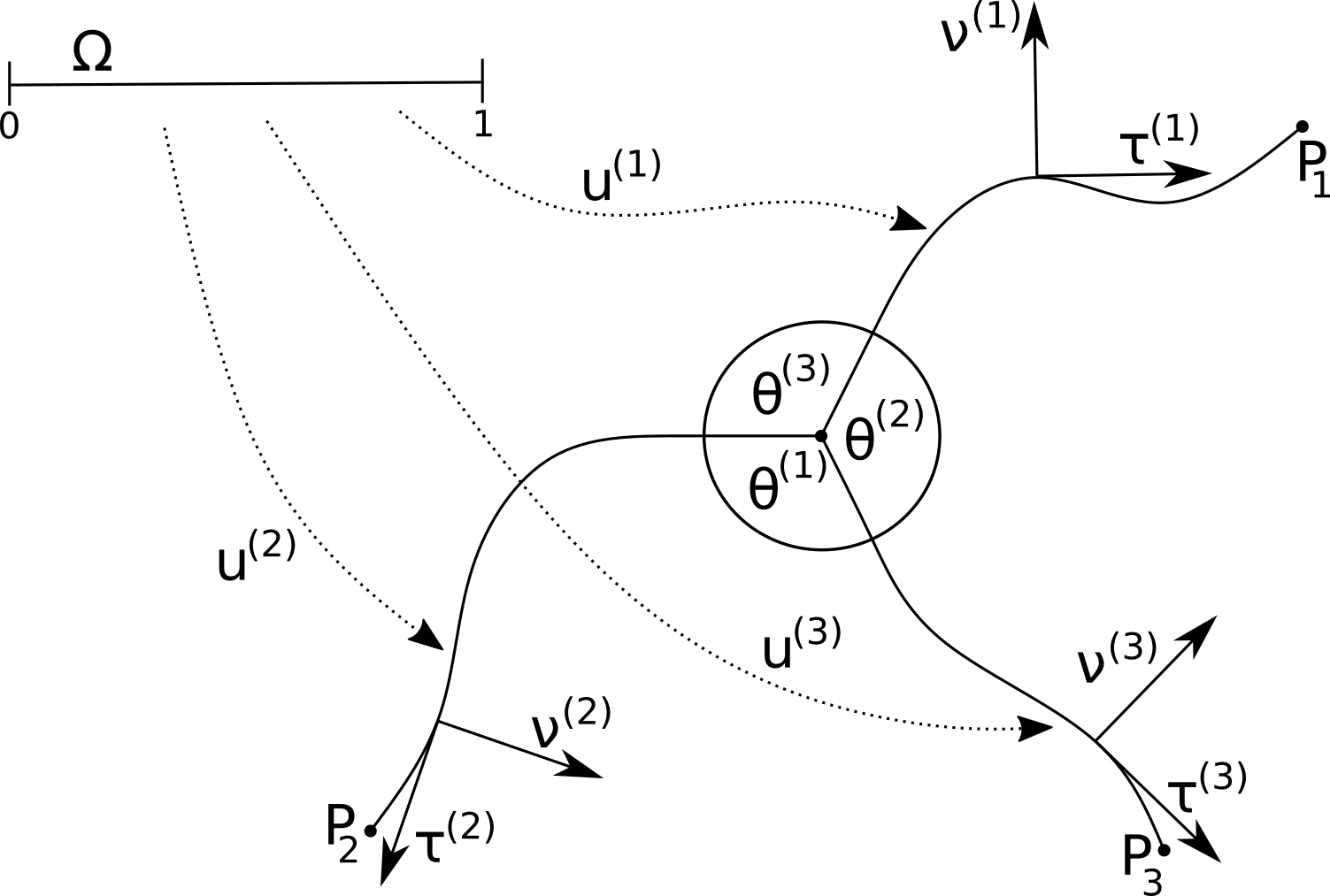} 
 \end{center}
 \caption{Illustration of a triod, see Definition \ref{def:triod} for the notation.}
 \label{fig:triod}
\end{figure}

See Figure \ref{fig:triod} for an illustration of a triod. Note that if $P_{1}=P_{2}=P_{3}$ then the triod is actually a so-called theta-network with a fixed point. When $P_{1}=P_{2}=P_{3}=0$ we write simply $\triod_{0}$. 

On the set of (sufficiently smooth) triods we consider the gradient flow dynamics 
\[
 \langle \pd_{t} \Gamma(t), \Phi \rangle_{\Gamma(t)} = - \langle E'(\Gamma(t)), \Phi \rangle \quad \forall \Phi \in \triod_{0}.
\]
Here, $E'(\Gamma(t))$ is the variation of the energy, i.e., writing $\Gamma(t) = (u^{(1)}(t),u^{(2)}(t),u^{(3)}(t)) \in \triod_{P}$, for any $\Phi = (\varphi^{(1)},\varphi^{(2)},\varphi^{(3)}) \in \triod_{0}$, 
\[
 \langle E'(\Gamma), \Phi \rangle = \sum_{i=1}^{3} \epsilon \int_{\domain} u_{x}^{(i)} \cdot \varphi_{x}^{(i)} dx + \int_{\domain} \frac{u_{x}^{(i)}}{|u_{x}^{(i)}|} \cdot \varphi_{x}^{(i)} dx.
\]
Note also that
\[
 \langle \pd_{t} \Gamma, \Phi \rangle_{\Gamma} = \sum_{i=1}^{3} \int_{\domain} (u_{t}^{(i)} \cdot \frac{(u_{x}^{(i)})^{\perp}}{|u_{x}^{(i)}|}) (\varphi^{(i)} \cdot \frac{(u_{x}^{(i)})^{\perp}}{|u_{x}^{(i)}|}) |u_{x}^{(i)}| + \epsilon (u_{t}^{(i)} \cdot \frac{u_{x}^{(i)}}{|u_{x}^{(i)}|}) (\varphi^{(i)} \cdot \frac{u_{x}^{(i)}}{|u_{x}^{(i)}|}) |u_{x}^{(i)}|^{2} dx.
\]

\begin{prob} \label{prob_cont}
 Given $\epsilon \in (0,\tfrac{1}{2}]$ and an initial triod $\Gamma_{0} = (u^{(1)}_{0},u^{(2)}_{0},u^{(3)}_{0}) \in \triod_{P}$ with points $P_{i} \in \R^{2}$, $i=1,2,3$, find a time interval $[0,T]$, $T \in (0,\infty)$, and a family of triods $\Gamma(t) = (u^{(1)}(t),u^{(2)}(t),u^{(3)}(t)) \in \triod_{P}$, $t \in [0,T]$, such that $\Gamma(0) = \Gamma_{0}$ and such that for almost every $t \in (0,T)$ and all $\Phi = (\varphi^{(1)},\varphi^{(2)},\varphi^{(3)}) \in \triod_{0}$ 
 \begin{multline} \label{eq:prob_weak}
 \sum_{i=1}^{3} \Bigg{(} \int_{\domain} (u_{t}^{(i)} \cdot \frac{(u_{x}^{(i)})^{\perp}}{|u_{x}^{(i)}|}) (\varphi^{(i)} \cdot \frac{(u_{x}^{(i)})^{\perp}}{|u_{x}^{(i)}|}) |u_{x}^{(i)}| dx + \epsilon \int_{\domain} (u_{t}^{(i)} \cdot \frac{u_{x}^{(i)}}{|u_{x}^{(i)}|}) (\varphi^{(i)} \cdot \frac{u_{x}^{(i)}}{|u_{x}^{(i)}|}) |u_{x}^{(i)}|^{2} dx \Bigg{)} \\
 = -\sum_{i=1}^{3} \Bigg{(} \epsilon \int_{\domain} u_{x}^{(i)} \cdot \varphi_{x}^{(i)} dx + \int_{\domain} \frac{u_{x}^{(i)}}{|u_{x}^{(i)}|} \cdot \varphi_{x}^{(i)} dx \Bigg{)}.
 \end{multline}
\end{prob}

The above gradient flow gives rise to an initial-boundary value problem for a system of PDEs. Let us denote the curvature vectors by $\kappa^{(i)} := \tau^{(i)}_{x} / |u_{x}^{(i)}|$, $i=1,2,3$. Observe that 
\begin{align*}
 (|u_{x}^{(i)}|)_{x} &= \frac{u_{xx}^{(i)} \cdot u_{x}^{(i)}}{|u_{x}^{(i)}|} = u_{xx}^{(i)} \cdot \tau^{(i)},\\
 \tau_{x}^{(i)} &= \Big{(} \frac{u_{x}^{(i)}}{|u_{x}^{(i)}|} \Big{)}_{x} = \frac{u_{xx}^{(i)}}{|u_{x}^{(i)}|} - \frac{u_{x}^{(i)} (u_{xx}^{(i)} \cdot \tau^{(i)})}{|u_{x}^{(i)}|^2} \\
 &= \frac{1}{|u_{x}^{(i)}|} \big{(} u_{xx}^{(i)} - (u_{xx}^{(i)} \cdot \tau^{(i)}) \tau^{(i)} \big{)} = \frac{1}{|u_{x}^{(i)}|} \big{(} u_{xx}^{(i)} \cdot \nu^{(i)} \big{)} \nu^{(i)}. 
\end{align*}
Partial integration on the right-hand-side of \eqref{eq:prob_weak} yields that 
\begin{align*}
 \sum_{i=1}^{3} & \int_{\domain} (u_{t}^{(i)} \cdot \nu^{(i)}) (\nu^{(i)} \cdot \varphi^{(i)}) |u_{x}^{(i)}| + \epsilon (u_{t}^{(i)} \cdot \tau^{(i)}) (\tau^{(i)} \cdot \varphi^{(i)}) |u_{x}^{(i)}|^{2} dx \\
 &= -\sum_{i=1}^{3} \int_{\domain} \epsilon u_{x}^{(i)} \cdot \varphi_{x}^{(i)} + \tau^{(i)} \cdot \varphi_{x}^{(i)} dx \displaybreak[0] \\
 &= -\sum_{i=1}^{3} \big{[} ( \epsilon u_{x}^{(i)} + \tau^{(i)} ) \varphi^{(i)} \big{]}_{0}^{1}
 + \sum_{i=1}^{3} \int_{\domain} \big{(} \epsilon (\tau^{(i)} |u_{x}^{(i)}|)_{x} + \tau_{x}^{(i)} \big{)} \cdot \varphi^{(i)} dx
 \displaybreak[0] \\
 &= \sum_{i=1}^{3} \big{(} \tau^{(i)}(0) + \epsilon u_{x}^{(i)}(0) \big{)} \varphi^{(i)}(0) - \big{(} \tau^{(i)}(1) + \epsilon u_{x}^{(i)}(1) \big{)} \varphi^{(i)}(1) \\
 & \quad + \sum_{i=1}^{3} \int_{\domain} (\epsilon |u_{x}^{(i)}| + 1) (\tau_{x}^{(i)} \cdot \nu^{(i)}) (\nu^{(i)} \cdot \varphi^{(i)}) + \epsilon (u_{xx}^{(i)} \cdot \tau^{(i)}) (\tau^{(i)} \cdot \varphi^{(i)}) dx. 
\end{align*}
Separating the normal from the tangential terms yields the following strong equations: 
\begin{align*}
 (u_{t}^{(i)} \cdot \nu^{(i)}) \nu^{(i)} |u_{x}^{(i)}| &= (1 + \epsilon |u_{x}^{(i)}|) \tau_{x}^{(i)} = \frac{1}{|u_{x}^{(i)}|} (\nu^{(i)} \cdot u_{xx}^{(i)}) \nu^{(i)} + \epsilon |u_{x}^{(i)}|^2 \kappa^{(i)},\\
 (u_{t}^{(i)} \cdot \tau^{(i)}) \tau^{(i)} |u_{x}^{(i)}|^{2} &= (\tau^{(i)} \cdot u_{xx}) \tau^{(i)}. 
\end{align*}
Using that $\varphi^{(i)}(1) = 0$ and that $\varphi^{(1)}(0) = \varphi^{(2)}(0) = \varphi^{(3)}(0)$ we furthermore deduce that 
\[
 \sum_{i=1}^{3} \big{(} \tau^{(i)}(0) + \epsilon u_{x}^{(i)}(0) \big{)} = 0.
\]
Thus, in its classical form the PDE problem is given by 

\begin{align}
u_{t}^{(i)} &= \frac{u_{xx}^{(i)}}{|u_{x}^{(i)}|^{2}} + \epsilon |u_{x}^{(i)}| \kappa^{(i)} & \qquad & \forall (t,x) \in (0,T) \times (0,1), \quad i=1,2,3, \label{PDEu} \\
u^{(i)}(t,1) &= P_{i} & \qquad & \forall t \in [0,T], \quad i=1,2,3, \nonumber \\
u^{(1)}(t,0) &= u^{(2)}(t,0)=u^{(3)}(t,0) & \qquad & \forall t \in [0,T], \nonumber \\ 
0 &= \sum_{i=1}^{3} \tau^{(i)}(t,0) + \epsilon u^{(i)}_{x}(t,0) & \qquad & \forall t\in [0,T], \label{JPC} \\
u^{(i)}(0,x) &= u^{(i)}_{0}(x) & \qquad & \forall x \in \domain, \quad i=1,2,3, \nonumber 
\end{align}

Observe that each curve moves according to a non-geometrical, i.e., parametrisation dependent perturbation of the so called special curvature flow \eqref{eq:csf2}. As shown above, integration by parts makes it possible to ``isolate'' the $\epsilon$-contribution to the normal component of the flow (see \eqref{PDEu}). Dealing with the weak form, as we do later on for the FEM-analysis, this ``decoupling'' seems no longer possible. Consequently, the parameter $\epsilon$ appears in all bounding constants of the error estimates for the numerical scheme, typically in an unfavourable way such that we can not provide estimates that hold true uniformly in $\epsilon$. 

We will be interested in approximating the solution on a finite time interval and make the following assumptions:

\begin{ass} \label{ass_cont_sol}
We assume the existence of a unique solution $\Gamma=(u^{(1)},u^{(2)},u^{(3)})$ to Problem~\ref{prob_cont} on some intervall $[0,T]$ such that, for each curve $i =1,2,3$, we have
\begin{align*}
&u^{(i)} \in L^{2}((0,T), W^{2,2}(\domain)),\\
&u^{(i)}_{t} \in L^{\infty}((0,T), W^{1,2}(\domain)) \cap L^{2}((0,T), W^{2,2}(\domain)),\\
&u^{(i)}_{0} \in W^{2,2}(\domain).
\end{align*}
Moreover, we assume that there is a small constant $c_{0} \in (0,\frac{1}{2}]$ such that for all $i=1,2,3$
\begin{align} \label{bc0}
 0 < c_{0} \leq |u_{x}^{(i)}(t,x)| \leq \frac{1}{c_{0}} \text{ on } [0,T] \times \domain.
\end{align}
\end{ass}

For any $b \in \R^{2}$ we have that $|b|^2 = (b \cdot \nu^{(i)})^2 + (b \cdot \tau^{(i)})^2$. Recalling that $\epsilon, c_{0} \leq \tfrac{1}{2}$, for any triod $\Upsilon = \{ v^{(1)},v^{(2)},v^{(3)} \} \in \triod_{P}$ we therefore obtain that 
\begin{align*}
 \langle v^{(i)}, v^{(i)} \rangle_{u^{(i)}} &= \int_{\domain} ( v^{(i)} \cdot \frac{(u_{x}^{(i)})^{\perp}}{|u_{x}^{(i)}|}) (v^{(i)} \cdot \frac{(u_{x}^{(i)})^{\perp}}{|u_{x}^{(i)}|}) |u_{x}^{(i)}| dx + \epsilon \int_{\domain} (v^{(i)} \cdot \frac{u_{x}^{(i)}}{|u_{x}^{(i)}|}) ( v^{(i)} \cdot \frac{u_{x}^{(i)}}{|u_{x}^{(i)}|})|u_{x}^{(i)}|^{2} dx \\
 &\geq c_{0} \int_{\domain} ( v^{(i)} \cdot \frac{(u_{x}^{(i)})^{\perp}}{|u_{x}^{(i)}|})^{2} dx + \epsilon c_{0}^{2} \int_{\domain} (v^{(i)} \cdot \frac{u_{x}^{(i)}}{|u_{x}^{(i)}|})^{2} dx \\
 &= c_{0} \underbrace{(1 - \epsilon c_{0})}_{\geq 3/4} \int_{\domain} ( v^{(i)} \cdot \frac{(u_{x}^{(i)})^{\perp}}{|u_{x}^{(i)}|})^{2} dx + \epsilon c_{0}^{2} \int_{\domain} | v^{(i)} |^2 dx.
\end{align*}
Moreover 
\[
 \langle v^{(i)}, v^{(i)} \rangle_{u^{(i)}} \leq \Big{(} \frac{1}{c_{0}} + \frac{\epsilon}{c_{0}^{2}} \Big{)} \int_{\domain} | v^{(i)} |^{2} dx
\]
and therefore for all $i=1,2,3$ and at all times $t \in [0,T]$ 
\begin{align} \label{eqnorms}
 \epsilon c_{0}^{2} \| v^{(i)} \|_{L^{2}(\domain)}^{2} \leq \langle v^{(i)}, v^{(i)} \rangle_{u^{(i)}(t)} \leq \frac{1}{c_{0}^{2}} \| v^{(i)} \|_{L^{2}(\domain)}^{2}.
\end{align}

\section{Finite elements and semi-discrete problem}

For the finite element approximation consider the uniform mesh with vertices $x_{j} = hj \in \domain$ for $j=0, \ldots, J$ with $h = 1/J$ for some $J \in \mbN$, and let $\domain_{j} = [x_{j-1},x_{j}]$, $j=1, \ldots, J$. We denote the space of continuous and piecewise linear functions on $\domain$ by 
\[
 S_{h} := \big{\{} v_{h} \in C^0(\domain,\R) \, \big{|} \, v_{h}|_{\domain_{j}} \mbox{ is linear} \big{\}}.
\]
The basis functions $\phi_{j} \in S_{h}$ are defined as usual through $\phi_{j}(x_{i})=\delta_{ij}$ for $i,j=0, \ldots, J$. 

Let $\interpol u$ denote the linear Lagrange interpolant. We shall use the standard interpolation estimates (both for scalar and vector valued functions) :
\begin{align}
\label{eq:interpolL2}
\| v - \interpol v \|_{L^{2}(\domain)} &\leq C_{p} h^k \|v \|_{W^{k,2}(\domain)} &\text{ for $k=1,2$}, \\
\label{eq:interpolH1}
\| (v - \interpol v)_x \|_{L^{2} (\domain)} & \leq C_{p} h \| v \|_{W^{2,2} (\domain)}, \\
\nonumber 
 \| (\interpol v)_{x} \|_{L^{2} (\domain)} & \leq C_{p} \| v_{x} \|_{ L^{2}(\domain)}, \\
\nonumber 
\| v - \interpol v \| _{L^{\infty}(\domain)} &\leq C_{p} h^{1/2} \| v_{x}\|_{L^{2}(\domain)}, \\
\nonumber 
\| (v - \interpol v)_{x} \| _{L^{\infty}(\domain)} &\leq C_{p} h^{1/2} \| v_{xx}\|_{L^{2}(\domain)}. 
\end{align}
Recall also the inverse estimates for any $w_{h} \in S_{h}$:
\begin{align}
\| w_{hx} \|_{L^{2}(\domain_{j})} & \leq \frac{C_{p}}{h} \| w_{h} \|_{L^{2} (\domain_{j})} & \quad \Longrightarrow \quad \|w_{hx} \|_{L^{2}(\domain)} & \leq \frac{C_{p}}{h} \| w_{h} \|_{L^{2} (\domain)}, \label{eq:inverse1} \\
\| w_{h} \|_{L^{\infty} (\domain_{j})} & \leq \frac{C_{p}}{\sqrt{h}} \| w_{h} \|_{L^{2}(\domain_{j}) } & \quad \Longrightarrow \quad \| w_{h} \|_{L^{\infty} (\domain)} & \leq \frac{C_{p}}{\sqrt{h}} \| w_{h} \|_{L^{2}(\domain)} \label{eq:inverse2}.
\end{align}

Similarly to the continuous setting we define discrete triods by
\begin{align*}
 \triod_{P,h} := \{ \, \Gamma_{h} = (u^{(1)}_{h},u^{(2)}_{h},u^{(3)}_{h}) \, | \, & u_{h}^{(i)} \in S^{2}_{h} \text{ regular almost everywhere}, \\
 & u_{h}^{(i)}(1) = P_{i}, \quad i=1,2,3, \\
 & u_{h}^{(1)}(0) = u_{h}^{(2)}(0) = u_{h}^{(3)}(0) \, \},
\end{align*}
and also introduce the notation 
\[
 \tau^{(i)}_{h} = \frac{u^{(i)}_{hx}}{|u^{(i)}_{hx}|} \quad \mbox{ and } \quad \nu^{(i)}_{h} = (\tau_{h}^{(i)})^\perp = \frac{(u^{(i)}_{hx})^{\perp}}{|u^{(i)}_{hx}|}.
\]
In case that $P_{1} = P_{2} = P_{3} = 0$ we write $\triod_{0,h}$ and note that this is a space of dimension 
\begin{equation} \label{eq:defdim}
 d_{0,h} := \dim(\triod_{0,h}) = 6J-4. 
\end{equation}
Note that \eqref{innerprod} is also well-defined for discrete triods, and even for functions $v,w \in W^{1,2}(\domain,\R^2)$ we can write 
\begin{align} \label{innerprodh}
 \langle v, w \rangle_{u^{(i)}_{h}} := \int_{\domain} (v \cdot \nu^{(i)}_{h}) (w \cdot \nu^{(i)}_{h}) |u^{(i)}_{hx}| + \epsilon (v \cdot \tau^{(i)}_{h}) (w \cdot \tau^{(i)}_{h}) |u^{(i)}_{hx}|^{2} dx, 
\end{align}
for $\Gamma_{h} = (u^{(1)}_{h},u^{(2)}_{h},u^{(3)}_{h}) \in \triod_{P,h}$ with uniformly bounded length elements. The semi-discrete problem that will be analysed for convergence reads: 

\begin{prob} \label{prob_semidis}
 Let $\Gamma_{0} = (u^{(1)}_{0},u^{(2)}_{0},u^{(3)}_{0}) \in \triod_{P}$ denote an initial triod with points $P_{i} \in \R^{2}$, $i=1,2,3$, such that Problem \ref{prob_cont} is well-posed on time interval $[0,T]$ as specified in Assumption~\ref{ass_cont_sol}. \\
 Find a family of discrete triods $\Gamma_{h}(t) = (u^{(1)}_{h}(t),u^{(2)}_{h}(t),u^{(3)}_{h}(t)) \in \triod_{P,h}$, $t \in [0,T]$, such that
 $u_{h}^{(i)}(0) = \interpol u^{(i)}_{0}$, $i=1,2,3$, and such that for all $t \in (0,T)$ and all $(\varphi^{(1)}_{h}, \varphi^{(2)}_{h}, \varphi^{(3)}_{h}) \in \triod_{0,h}$
 \begin{multline*}
 \sum_{i=1}^{3} \Bigg{(} \int_{\domain} (u_{ht}^{(i)} \cdot \frac{(u_{hx}^{(i)})^{\perp}}{|u_{hx}^{(i)}|}) (\varphi_{h}^{(i)} \cdot \frac{(u_{hx}^{(i)})^{\perp}}{|u_{hx}^{(i)}|}) |u_{hx}^{(i)}| dx + \epsilon \int_{\domain} (u_{ht}^{(i)} \cdot \frac{u_{hx}^{(i)}}{|u_{hx}^{(i)}|}) (\varphi_{h}^{(i)} \cdot \frac{u_{hx}^{(i)}}{|u_{hx}^{(i)}|}) |u_{hx}^{(i)}|^{2} dx \Bigg{)} \\
 = -\sum_{i=1}^{3} \Bigg{(} \epsilon \int_{\domain} u_{hx}^{(i)} \cdot \varphi_{hx}^{(i)} dx + \int_{\domain} \frac{u_{hx}^{(i)}}{|u_{hx}^{(i)}|} \cdot \varphi_{hx}^{(i)} dx \Bigg{)}.
 \end{multline*}
\end{prob}

\section{Convergence analysis}
\label{sec:convergence}

We now show that solutions to Problem \ref{prob_semidis} exist for $h$ small enough, and that they converge to the solution of Problem \ref{prob_cont}. The precise statement is below in Theorem \ref{th_conv}. It is proved using a fixed point argument. In the following, a generic constant $C$ may change from line to line. 

Let $\mathcal{Z}_{h} := C^{0}([0,T], S_{h}^{2})$ and $\mathcal{X}_{h} := \mathcal{Z}_{h}^{3}$ denote the Banach spaces of time continuous functions with values in $S_{h}^{2}$ and $(S_{h}^{2})^{3}$, respectively, endowed with the norms 
\[
 \| u_{h} \|_{\mathcal{Z}_{h}} := \sup_{ t \in [0,T]} \| u_{h}(t) \|_{L^{2}(\domain)}, \quad \|(u^{(1)}_{h}, u^{(2)}_{h}, u^{(3)}_{h}) \|_{\mathcal{X}_{h}}:= \max_{i=1,2,3} \| u^{(i)}_{h}\|_{\mathcal{Z}_{h}}.
\]
For some constants $K>1$, $M>0$ (to be specified later on) consider the set
\begin{align*}
\mathcal{B}_{h} := \big{\{} \, \Gamma_{h} = (u^{(1)}_{h}, u^{(2)}_{h}, u^{(3)}_{h}) \, \big{|} \, & u^{(i)}_{h} \in \mathcal{Z}_{h}, & \quad & i=1,2,3, \\
& \Gamma_{h}(t) \in \mathcal{T}_{P,h} & \quad & \forall t \in [0,T], \\
& u^{(i)}_{h}(0,\cdot) = (\interpol u^{(i)}_{0})(\cdot), & \quad & i=1,2,3, \\
& \sup_{t \in [0,T]} e^{-Mt} \| (u^{(i)}_{x} - u^{(i)}_{hx})(t) \|^{2}_{L^{2}(\domain)} \leq K^{2} h^{2}, & \quad & i=1,2,3 \, \big{\}}.
\end{align*}

In view of the application of the Schauder fixed point theory later on, let us briefly collect the relevant properties of the set $\mathcal{B}_{h} \subset \mathcal{X}_{h}$. 
\begin{enumerate}
 \item 
 $\mathcal{B}_{h}$ is non-empty if $K$ is big enough (which we assume henceforth): \\
 Consider the linear interpolation $(\interpol u^{(1)}, \interpol u^{(2)}, \interpol u^{(3)})$ of the given smooth solution $\Gamma$. Recalling Assumption~\ref{ass_cont_sol} we see that $\interpol u^{(i)} \in \mathcal{Z}_{h}$. Moreover, $\sup_{t \in [0,T]} \| u^{(i)}(t) \|_{W^{2,2}(\domain)}$ is finite as  $u^{(i)} \in W^{1,2}((0,T),W^{2,2}(\domain))$ by Assumption~\ref{ass_cont_sol}. With the interpolation inequality \eqref{eq:interpolH1} we then see that a constant that satisfies $K \geq C_{p} \sup_{t \in [0,T]} \| u^{(i)}(t) \|_{W^{2,2}(\domain)}$, $i=1,2,3$, is sufficient to ensure that 
 \[
 \sup_{t \in [0,T]} e^{-Mt} \| (u^{(i)}_{x} - (\interpol u^{(i)})_{x})(t) \|^{2}_{L^{2}(\domain)} \leq K^{2} h^{2}, \quad i=1,2,3. 
 \]

 \item 
 $\mathcal{B}_{h}$ is bounded: \\
 Using Assumption~\ref{ass_cont_sol} again, for any $\Gamma_{h}=(v_{h}^{(1)},v_{h}^{(2)},v_{h}^{(3)}) \in \mathcal{B}_{h}$ we have that 
 \begin{multline*}
 |v^{(i)}_{h}(t,x)| \leq |P_{i}| + \int_{0}^{1}|v^{(i)}_{hx}(t,x)| dx \\
 \leq |P_{i}| + \| v^{(i)}_{hx}(t, \cdot) - u^{(i)}_{x}(t) \|_{L^{2}(\domain)} + \|u^{(i)}_{x}(t)\|_{L^{2}(\domain)} \leq C + e^{MT} K^2 h^2. 
 \end{multline*}
 Taking the supremum over time and the $L^2$ norm in space we see that $\| \Gamma_{h} \|_{\mathcal{X}_{h}} \leq C$ with a constant $C>0$ independent of $\Gamma_{h} \in \mathcal{B}_{h}$. 
 
 \item 
 $\mathcal{B}_{h}$ is closed: \\
 Assume that $\|\Gamma_{h}^{(j)} - \Gamma_{h} \|_{\mathcal{X}_{h}} \to 0$ as $j \to \infty$ with $\Gamma_{h}^{(j)} = (u^{(1,j)}_{h},u^{(2,j)}_{h},u^{(3,j)}_{h}) \in \mathcal{B}_{h}$ and $\Gamma_{h} = (u^{(1)}_{h},u^{(2)}_{h},u^{(3)}_{h}) \in \mathcal{Z}_{h}^{3}$. 
 By the finite dimensionality of $S_{h}$ all norms are equivalent on that space and \eqref{eq:inverse1} holds, so also 
 \[
 \max_{i=1,2,3} \sup_{t \in [0,T]} e^{-Mt} \| u^{(i,j)}_{hx}(t) - u^{(i)}_{hx}(t) \|_{L^2(\domain)}^2 \leq \max_{i=1,2,3} \sup_{t \in [0,T]}
 \frac{C_{p}^{2}}{h^{2}} \| u^{(i,j)}_{h}(t) - u^{(i)}_{h}(t) \|_{L^2(\domain)}^2
 \to 0 
 \]
 as $j \to \infty$, whence the $h-$estimate is satisfied in the limit. Similarly, using \eqref{eq:inverse2} all pointwise conditions (boundaries, triple junction, and initial conditions) remain satisfied in the limit.
 
 \item 
 $\mathcal{B}_{h} \subset \mathcal{Z}_{h}^3$ is convex: \\
 Any convex combination clearly also satisfies the pointwise conditions, and the $h$-estimate is easy to show using the convexity of norms, too. 
\end{enumerate}

Given any $\Gamma_{h} = (u_{h}^{(1)}, u_{h}^{(2)}, u_{h}^{(3)}) \in \mathcal{B}_{h}$, using interpolation and inverse inequalities we can write
\begin{align*}
\| (u^{(i)}_{hx} &-u^{(i)}_{x})(t) \|_{L^{\infty}(\domain)} \leq \| (u^{(i)}_{x} -(\interpol u^{(i)})_{x})(t) \|_{L^{\infty}(\domain)} + \| (\interpol u^{(i)})_{x} -u^{(i)}_{hx})(t) \|_{L^{\infty}(\domain)} \\
&\leq C \sqrt{h} \| u^{(i)}_{xx}(t) \|_{L^{2}(\domain)} + \frac{C}{\sqrt{h}} \| (\interpol u^{(i)})_{x} - u^{(i)}_{hx})(t) \|_{L^{2}(\domain)} \\
&\leq C \sqrt{h} \| u^{(i)}_{xx}(t) \|_{L^{2}(\domain)} + \frac{C}{\sqrt{h}} ( \| (u^{(i)}_{x} - (\interpol u^{(i)})_{x})(t) \|_{L^{2}(\domain)} + \| (u^{(i)}_{x} - u^{(i)}_{hx})(t) \|_{L^{2}(\domain)} ) \\
&\leq C \sqrt{h} \| u^{(i)}(t) \|_{W^{2,2}(\domain)} + C \sqrt{h} K e^{\frac{MT}{2}},
\end{align*} 
where $C=C(C_{p})$. Hence, using Assumption~\ref{ass_cont_sol}, there is a (sufficiently small) $h_{0}=h_{0}(C_{p}, c_{0},K,M,T, \Gamma) > 0$ so that for all $h \leq h_{0}$ and $i=1,2,3$ 
\begin{align}\label{boundsLE}
 |u_{hx}^{(i)}(t,x)| \geq \frac{c_{0}}{2} \text{ and } |u_{hx}^{(i)}(t,x)| \leq \frac{2}{c_{0}} \quad \text{ for all } (t,x) \in [0,T] \times \domain.
\end{align}
Analogously to \eqref{eqnorms} one can now show that 
\begin{align} \label{eqnormsh}
 \epsilon \frac{c_{0}^{2}}{4} \| v^{(i)} \|_{L^{2}(\domain)}^{2} \leq \langle v^{(i)}, v^{(i)} \rangle_{u^{(i)}_{h}(t)} \leq \frac{4}{c_{0}^{2}} \| v^{(i)} \|_{L^{2}(\domain)}^{2}
\end{align}
for $i=1,2,3$ and $t \in [0,T]$.

Consider now the following problem:

\begin{prob} \label{prob_fixpoint}
 Given any $\Gamma_{h} = (u_{h}^{(1)}, u_{h}^{(2)}, u_{h}^{(3)}) \in \mathcal{B}_{h}$, find differentiable functions $Y_{h}^{(i)} \in \mathcal{Z}_{h}$ such that $(Y_{h}^{(1)}(t),Y_{h}^{(2)}(t),Y_{h}^{(3)}(t)) \in \triod_{P,h}$ for all $t \in [0,T]$, such that $Y_{h}^{(i)}(0) = \interpol u^{(i)}_{0}$, $i=1,2,3$, and such that for all $t \in (0,T)$ and all $(\varphi^{(1)}_{h}, \varphi^{(2)}_{h}, \varphi^{(3)}_{h}) \in \triod_{0,h}$
 \begin{multline}
 \sum_{i=1}^{3} \Bigg{(} \int_{\domain} (Y_{ht}^{(i)} \cdot \frac{(u_{hx}^{(i)})^{\perp}}{|u_{hx}^{(i)}|}) (\varphi_{h}^{(i)} \cdot \frac{(u_{hx}^{(i)})^{\perp}}{|u_{hx}^{(i)}|}) |u_{hx}^{(i)}| dx + \epsilon \int_{\domain} (Y_{ht}^{(i)} \cdot \frac{u_{hx}^{(i)}}{|u_{hx}^{(i)}|}) (\varphi_{h}^{(i)} \cdot \frac{u_{hx}^{(i)}}{|u_{hx}^{(i)}|}) |u_{hx}^{(i)}|^{2} dx \Bigg{)} \\
 = -\sum_{i=1}^{3} \Bigg{(} \epsilon \int_{\domain} Y_{hx}^{(i)} \cdot \varphi_{hx}^{(i)} dx + \int_{\domain} \frac{Y_{hx}^{(i)}}{|Y_{hx}^{(i)}|} \cdot \varphi_{hx}^{(i)} dx \Bigg{)}. \label{eq:Ynew}
 \end{multline}
\end{prob}

\begin{prop} \label{prop_fixpoint}
 Let $h \leq h_{0}=h_{0}(C_{p},c_{0},K,M,T, \Gamma, \epsilon)$.
 Problem \ref{prob_fixpoint} has a unique solution $(Y_{h}^{(1)},Y_{h}^{(2)},Y_{h}^{(3)})$ that depends continuously on $\Gamma_{h}$ and that satisfies the estimates
 \begin{align} 
 \sup_{t \in [0,T]} e^{-Mt} \| u_{x}^{(i)}(t) - Y_{hx}^{(i)}(t) \|_{L^{2}(\domain)}^{2} & \leq \Big{(} 1 + \frac{K^{2}}{M}\Big{)} C h^{2}, \label{eq:estim_prop} \\
 \int_{0}^{T} \| u_{t}^{(i)}(t') - Y_{ht}^{(i)}(t') \|_{L^{2}(\domain)}^{2} dt' & \leq \tilde{C} h^2, \label{eq:estim2_prop} 
 \end{align}
 for $i=1,2,3$, with a constant $C>0$ depending on $c_{0}$, $T$, $\epsilon$, $C_{p}$, and norms of the $u^{(i)}$ with respect to the spaces in Assumption~\ref{ass_cont_sol}, and a constant $\tilde{C}>0$ depending on the same parameters and $M$ and $K$. 
\end{prop}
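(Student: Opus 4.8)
The statement asserts three things: existence and uniqueness of $(Y_h^{(1)}, Y_h^{(2)}, Y_h^{(3)})$ solving Problem~\ref{prob_fixpoint}, continuous dependence on $\Gamma_h$, and the two error estimates \eqref{eq:estim_prop}, \eqref{eq:estim2_prop}. The plan is to treat existence/uniqueness first via linear ODE theory, then derive the error estimates by a Gronwall argument testing with the interpolation error, and finally deduce continuous dependence.

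For existence and uniqueness, observe that equation \eqref{eq:Ynew} is, for fixed $\Gamma_h \in \mathcal{B}_h$, a \emph{linear} system of ODEs for the nodal values of $Y_h^{(i)}(t)$: the left-hand side is a bilinear form in $Y_{ht}^{(i)}$ and $\varphi_h^{(i)}$ with coefficients depending only on $u_{hx}^{(i)}$ (hence on $t$ through $\Gamma_h$, continuously), and the right-hand side is an \emph{affine} function of the nodal values of $Y_h^{(i)}$ — here one must note carefully that the troublesome nonlinear term $Y_{hx}^{(i)}/|Y_{hx}^{(i)}|$ is \emph{not} present in the way it first appears: rather, since the right-hand side involves $Y_h$ itself (not $u_h$), the equation is a genuine nonlinear ODE. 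So the honest route is: write the ODE system as $\mathsf{M}(t)\dot{Y}(t) = F(t, Y(t))$, where $\mathsf{M}(t)$ is the mass-type matrix built from the inner product \eqref{innerprodh}, which by \eqref{eqnormsh} is uniformly positive definite (with constant depending on $\epsilon c_0^2$) for $h \le h_0$, hence invertible; and $F$ is locally Lipschitz in $Y$ as long as $|Y_{hx}^{(i)}|$ stays bounded away from zero. Picard–Lindelöf then gives a unique local solution, and the a priori bound from the error estimate (below) combined with the fact that $|Y_{hx}^{(i)}|$ remains near $c_0$ (via an inverse estimate, exactly as in \eqref{boundsLE}) lets us continue the solution to all of $[0,T]$. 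One should also subtract off the boundary data $P_i$ to work in the linear space $\triod_{0,h}$, and use that the initial condition $\interpol u_0^{(i)}$ has the correct regularity by Assumption~\ref{ass_cont_sol}.

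For the estimate \eqref{eq:estim_prop}, I would compare $Y_h^{(i)}$ with the Ritz-type or simply the Lagrange interpolant $\interpol u^{(i)}$ of the exact solution. Write the error as $e^{(i)} := Y_h^{(i)} - \interpol u^{(i)} \in \triod_{0,h}$ (it vanishes at $t=0$ and satisfies the boundary/junction conditions), and test \eqref{eq:Ynew} with $\varphi_h^{(i)} = e^{(i)}$. Subtract the weak formulation \eqref{eq:prob_weak} satisfied by the exact solution (tested against $e^{(i)}$, which is legitimate since $e^{(i)} \in \triod_{0,h} \subset \triod_0$), and also subtract the interpolation error terms. The left-hand side produces $\sum_i \langle Y_{ht}^{(i)} - u_t^{(i)}, e^{(i)} \rangle$-type quantities plus cross terms coming from the difference between the inner product weighted by $u_{hx}^{(i)}$ and by $u_x^{(i)}$; the latter are controlled by \eqref{boundsLE}, the $L^\infty$ bound on $u_{hx}^{(i)} - u_x^{(i)}$ derived just before \eqref{boundsLE}, and the $\mathcal{B}_h$ defining inequality. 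The key manipulation is to write $\frac{d}{dt}\|e^{(i)}_x\|_{L^2}^2 = 2\int e_x^{(i)} \cdot e_{xt}^{(i)}$ and recognize the diffusion term $\epsilon \int Y_{hx}^{(i)} \cdot e_{hx}^{(i)}$ together with the principal part of the $\tau$-term as providing this derivative plus a coercive contribution; the nonlinear term $\int (\tau_h^{(i)} - \tau^{(i)})\cdot e_x^{(i)}$ is handled by the elementary estimate $|\tau_h - \tau| \le C|u_{hx} - u_x|$ (valid because both length elements are bounded below), which gives a term $\le C(\|e_x\|_{L^2} + h)\|e_x\|_{L^2}$ after splitting $u_{hx} - u_x = (Y_{hx} - \interpol u_x) + (\interpol u_x - u_x) = e_x + $ interpolation error. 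Absorbing half of the coercive term, using \eqref{eq:interpolH1}, \eqref{eq:interpolL2} for the interpolation remainders and $\|u_t\|_{W^{1,2}}$, $\|u\|_{W^{2,2}}$ bounds from Assumption~\ref{ass_cont_sol}, one arrives at a differential inequality of the form $\frac{d}{dt}\|e^{(i)}_x\|_{L^2}^2 + c\,\epsilon\|e^{(i)}_t\|_{L^2}^2 \le C(\|e^{(i)}_x\|_{L^2}^2 + h^2 + K^2 h^2)$; the factor $K^2$ on the right is exactly what is needed, after multiplying by $e^{-Mt}$ and choosing $M$ large enough to absorb the $C\|e_x\|^2$ term, to close the Gronwall estimate and reproduce the form $(1 + K^2/M)Ch^2$ on the right of \eqref{eq:estim_prop}. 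Then the triangle inequality with \eqref{eq:interpolH1} converts the bound on $\|e^{(i)}_x\|_{L^2}$ into the claimed bound on $\|u_x^{(i)} - Y_{hx}^{(i)}\|_{L^2}$, and integrating the $\epsilon\|e^{(i)}_t\|^2$ term in time (again using interpolation estimates for $\|u_t - \interpol u_t\|$ and the $L^2(W^{2,2})$ regularity of $u_t$) gives \eqref{eq:estim2_prop}, with $\tilde C$ now carrying the dependence on $M$ and $K$. Continuous dependence of $Y_h$ on $\Gamma_h$ follows by a similar energy estimate for the difference of two solutions corresponding to two data $\Gamma_h, \tilde\Gamma_h$, exploiting again that all coefficient maps are Lipschitz on the set where length elements are controlled.

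The main obstacle, I expect, is the bookkeeping around the \emph{nonlinearity of the diffusion term} $Y_{hx}^{(i)}/|Y_{hx}^{(i)}|$ in \eqref{eq:Ynew} combined with the fact that the inner product on the left is weighted by the \emph{fixed} $u_{hx}^{(i)}$ rather than by $Y_{hx}^{(i)}$: this asymmetry is precisely the device that keeps the ODE system well-posed, but it means the energy identity is not clean, and one must carefully track the mismatch terms $\int ((\text{weight in }Y) - (\text{weight in }u_h))(\cdots)$ and show they are higher order. A secondary difficulty is ensuring the constant on the right-hand side of \eqref{eq:estim_prop} genuinely has the structure $1 + K^2/M$ (so that, later, $M$ can be chosen to make the fixed-point map a self-map of $\mathcal{B}_h$), which forces one to be scrupulous about which error contributions get multiplied by $K^2 e^{MT}$ — only those coming from $\|u_x^{(i)} - u_{hx}^{(i)}\|$ with $u_h \in \mathcal{B}_h$ — and to keep the Gronwall constant $C$ independent of $K$. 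The $\epsilon^{-1}$ dependence enters unavoidably through the lower bound in \eqref{eqnormsh}, which is the only coercivity available in the tangential direction, and must be carried through rather than fought.
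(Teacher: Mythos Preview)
Your overall architecture (ODE existence via invertibility of the mass matrix, error estimate via Gronwall, continuous dependence from ODE theory) matches the paper. However, there is a genuine gap in the error estimate: you test with the error $e^{(i)} = Y_h^{(i)} - \interpol u^{(i)}$ itself, whereas the estimates \eqref{eq:estim_prop} and \eqref{eq:estim2_prop} require testing with its \emph{time derivative} $\varphi_h^{(i)} = \interpol u_t^{(i)} - Y_{ht}^{(i)}$. Testing with $e^{(i)}$ produces, from the parabolic term, a contribution of the form $\tfrac{d}{dt}\|e^{(i)}\|_{L^2}^2$ and, from the diffusion term, a coercive $\epsilon\|e^{(i)}_x\|_{L^2}^2$; Gronwall then yields only $e^{(i)} \in L^\infty(L^2) \cap L^2(H^1)$. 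That is not enough: \eqref{eq:estim_prop} demands control of $\|e^{(i)}_x\|_{L^2}$ uniformly in time, and \eqref{eq:estim2_prop} needs $\|e^{(i)}_t\|_{L^2(L^2)}$. Your claimed differential inequality $\tfrac{d}{dt}\|e^{(i)}_x\|^2 + c\epsilon\|e^{(i)}_t\|^2 \le \ldots$ is the right target, but it cannot arise from testing with $e^{(i)}$; the diffusion term $\epsilon\int Y_{hx}^{(i)}\cdot e^{(i)}_x$ contains no $e^{(i)}_{xt}$ and hence no $\tfrac{d}{dt}\|e^{(i)}_x\|^2$.

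Testing instead with $\interpol u_t^{(i)} - Y_{ht}^{(i)}$, the parabolic term gives directly $\langle u_t^{(i)}-Y_{ht}^{(i)}, u_t^{(i)}-Y_{ht}^{(i)}\rangle_{u_h^{(i)}}$ (coercive via \eqref{eqnormsh}), and the linear diffusion term $\epsilon\int(u_x^{(i)}-Y_{hx}^{(i)})\cdot(u_t^{(i)}-Y_{ht}^{(i)})_x$ becomes $\tfrac{d}{dt}\tfrac{\epsilon}{2}\|u_x^{(i)}-Y_{hx}^{(i)}\|^2$. The nonlinear diffusion term $\int(\tau^{(i)}-\tau_Y^{(i)})\cdot(u_t^{(i)}-Y_{ht}^{(i)})_x$ is the real obstacle you allude to; the paper handles it not by the Lipschitz bound $|\tau-\tau_Y|\le C|u_x-Y_{hx}|$ (which would leave an uncontrolled $\|(u_t-Y_{ht})_x\|$), but via the pointwise identity
\[
\frac{d}{dt}\Big(\tfrac12\big|\tau^{(i)}-\tau_Y^{(i)}\big|^2\,|Y_{hx}^{(i)}|\Big)
= (u_{xt}^{(i)}-Y_{hxt}^{(i)})\cdot(\tau^{(i)}-\tau_Y^{(i)}) + (\text{lower order in } u_{xt}^{(i)}),
\]
so that this term too becomes a time derivative of a nonnegative quantity, up to remainders controllable by Gronwall. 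Without this identity and the correct test function your argument does not close.
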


\begin{proof}
Recalling \eqref{eq:defdim}, we may write 
\[
 (Y_{h}^{(1)}(t,x), Y_{h}^{(2)}(t,x),Y_{h}^{(3)}(t,x)) = \sum_{\alpha=1}^{d_{0,h}} y_{\alpha}(t) b_{\alpha}(x) + \sum_{i=1}^{3} P_{i} \hat{b}_{i}(x), 
\]
where $\{ b_{\alpha} \in S_{h}\, | \, \alpha = 1, \ldots, d_{0,h} \}$ are basis functions for $\triod_{0,h}$, the $y_{\alpha}(t) \in \R$ are coefficient functions, and the $\hat{b}_{i} \in S_{h}$ are such that $\triod_{P,h} = \triod_{0,h} + \sum_{i=1}^{3} P_{i} \hat{b}_{i}(x)$. Testing with the basis functions we transform the above system \eqref{eq:Ynew} into a system of ODEs the form 
\begin{align*}
 \boldsymbol{A}(t,\Gamma_{h}(t)) \dot{\boldsymbol{y}}(t) = \boldsymbol{f}(t,\Gamma_{h}(t),\boldsymbol{y}(t)).
\end{align*}
Here, $\boldsymbol{y}=(y_{1}, \ldots, y_{d_{0,h}})^{\top}$, the matrix $\boldsymbol{A}(t) \in \R^{d_{0,h} \times d_{0,h}}$ is positive definite on $[0,T]$ for $h \leq h_{0}$ thanks to \eqref{boundsLE} and \eqref{eqnormsh}, and $\boldsymbol{f}$ is locally Lipschitz in points $\boldsymbol{y}$ that are such that $Y^{(i)}_{hx} > 0$ for all $i$. As the latter is satisfied by the assumption on the initial data, short time existence and uniqueness thus follow by standard ODE theory. 

After eventually decreasing $h_{0}$, let $h \leq h_{0}$ be sufficiently small so that \eqref{boundsLE} and 
$$ |(\interpol u_{0}^{(i)})_{x}| \geq \frac{3c_{0}}{4}, \qquad |(\interpol u_{0}^{(i)})_{x}| \leq \frac{4}{3c_{0}} \qquad \text{ holds in } \domain \text{ for } i=1,2,3.$$
Without loss of generality let $0<T_{h}\leq T$ denote the maximal time for which
\begin{align}
 \label{boundsLEY}
|Y_{hx}^{(i)}| \geq \frac{c_{0}}{2} \text{ and } |Y_{hx}^{(i)}| \leq \frac{2}{c_{0}} \quad \text{holds in } [0,T_{h}] \times \domain \text{ for } i=1,2,3. 
\end{align}

From the weak formulations \eqref{eq:Ynew} and \eqref{eq:prob_weak} we infer that
\begin{align}
\langle u_{t}^{(i)}-Y_{ht}^{(i)}, \varphi_{h}^{(i)}\rangle_{u_{h}^{(i)}(t)} + \epsilon \int_{\domain} (u_{x}^{(i)} - Y_{hx}^{(i)}) \cdot \varphi_{hx}^{(i)} dx
 &+ \int_{\domain} ( \frac{u_{x}^{(i)}}{|u_{x}^{(i)}|} - \frac{Y_{hx}^{(i)}}{|Y_{hx}^{(i)}|} ) \cdot \varphi_{hx}^{(i)} dx \nonumber \\
 & = \langle u_{t}^{(i)}, \varphi_{h}^{(i)}\rangle_{u_{h}^{(i)}(t)} -\langle u_{t}^{(i)}, \varphi_{h}^{(i)}\rangle_{u^{(i)}(t)} \label{eq:weak_difference}
\end{align}
Let now $\varphi_{h}^{(i)} = \interpol (u_{t}^{(i)}) - Y_{ht}^{(i)}$, $i=1,2,3$. The interpolation ensures that $\interpol (u^{(i)})(t,1) = P_{i}$ for all $t$ so that $\interpol (u_{t}^{(i)})(t,1) = 0$, and also $Y_{ht}^{(i)}(t,1) = \pd_{t} (P_{i}) = 0$. Therefore $\varphi_{h}^{(i)} (t,1) = 0$ for all $t$ and $i$. Similarly, $\interpol (u^{(1)})(t,0) = \interpol (u^{(2)})(t,0) = \interpol (u^{(3)})(t,0)$, which also holds true for the $Y_{h}^{(i)}(t,0)$ by definition. Therefore $\varphi_{h}^{(1)}(t,0) = \varphi_{h}^{2}(t,0) = \varphi_{h}^{(3)}(t,0)$, and altogether $(\varphi_{h}^{(1)},\varphi_{h}^{(1)},\varphi_{h}^{(1)}) \in \triod_{0,h}$ is permitted as a test function in \eqref{eq:weak_difference}. Adding the left-hand-side of \eqref{eq:weak_difference} tested with $\varphi^{(i)} = u^{(i)}_{t}$ to both sides and putting the terms involving $\interpol (u_{t}^{(i)})$ to the right-hand-side we obtain that
\begin{align}
\langle u_{t}^{(i)}-Y_{ht}^{(i)}, & u_{t}^{(i)}-Y_{ht}^{(i)} \rangle_{u_{h}^{(i)}(t)} + \frac{d}{dt} \left( \frac{\epsilon}{2}\int_{\domain}| u_{x}^{(i)} - Y_{hx}^{(i)}|^{2} dx \right ) \nonumber \\
& \quad + \int_{\domain} (\frac{u^{(i)}_{x}}{|u^{(i)}_{x}|} -\frac{Y^{(i)}_{hx}}{|Y^{(i)}_{hx}|})\cdot (u^{(i)}_{t} -Y^{(i)}_{ht})_{x} dx \nonumber \displaybreak[0] \\
&= \langle u_{t}^{(i)}-Y_{ht}^{(i)}, u_{t}^{(i)} - \interpol u_{t}^{(i)} \rangle_{u_{h}^{(i)}(t)} \nonumber \\
& \quad + \epsilon \int_{\domain} (u_{x}^{(i)} - Y_{hx}^{(i)}) \cdot ( u_{t}^{(i)} - \interpol u_{t}^{(i)})_{x} dx \nonumber \\
& \quad + \left(\langle u_{t}^{(i)}, ( \interpol u_{t}^{(i)}-Y_{ht}^{(i)})\rangle_{u_{h}^{(i)}(t)} -\langle u_{t}^{(i)},(\interpol u_{t}^{(i)}-Y_{ht}^{(i)})\rangle_{u^{(i)}(t)} \right) \nonumber \\
& \quad + \int_{\domain} (\frac{u^{(i)}_{x}}{|u^{(i)}_{x}|} -\frac{Y^{(i)}_{hx}}{|Y^{(i)}_{hx}|}) \cdot (u^{(i)}_{t} -\interpol u^{(i)}_{t})_{x}\, dx \label{eq:start} \\
 & =: J_{1} + J_{2} + J_{3} + J_{4}. \nonumber 
\end{align}

Using \eqref{eqnormsh} shows that 
\begin{equation} \label{eq:estim_below}
 \langle u_{t}^{(i)}-Y_{ht}^{(i)}, u_{t}^{(i)}-Y_{ht}^{(i)} \rangle_{u_{h}^{(i)}(t)} \geq \epsilon \frac{c_{0}^{2}}{4} \| u_{t}^{(i)}-Y_{ht}^{(i)} \|_{L^{2}(\domain)}^{2}. 
\end{equation}
Another calculation shows that 
\begin{align*}
 \frac{d}{dt} \Bigg{(} & \frac{1}{2} \Big{|} \frac{u^{(i)}_{x}}{|u^{(i)}_{x}|} - \frac{Y^{(i)}_{hx}}{|Y^{(i)}_{hx}|} \Big{|}^{2} | Y^{(i)}_{hx}| \Bigg{)} \\
 &= \big{(} u^{(i)}_{xt} - Y^{(i)}_{hxt} \big{)} \cdot \Big{(} \frac{u^{(i)}_{x}}{|u^{(i)}_{x}|} - \frac{Y^{(i)}_{hx}}{|Y^{(i)}_{hx}|} \Big{)} \\
 & \quad - u^{(i)}_{xt} \cdot \Bigg{[} \Big{(} \frac{u^{(i)}_{x}}{|u^{(i)}_{x}|} - \frac{Y^{(i)}_{hx}}{|Y^{(i)}_{hx}|} \Big{)} \frac{|u^{(i)}_{x}| - |Y^{(i)}_{hx}|}{|u^{(i)}_{x}|} + \frac{u^{(i)}_{x}}{|u^{(i)}_{x}|} \, \frac{1}{2} \Big{|} \frac{u^{(i)}_{x}}{|u^{(i)}_{x}|} - \frac{Y^{(i)}_{hx}}{|Y^{(i)}_{hx}|} \Big{|}^{2} \frac{|Y^{(i)}_{hx}|}{|u^{(i)}_{x}|} \Bigg{]}. 
\end{align*}
Using this for the third term on the left-hand-side of \eqref{eq:start} we thus can write 
\begin{align} \label{eq:to_estimate}
\langle u_{t}^{(i)} - Y_{ht}^{(i)}, u_{t}^{(i)} - Y_{ht}^{(i)} \rangle_{u_{h}^{(i)}(t)} & + \frac{d}{dt} \left ( \frac{\epsilon}{2}\int_{\domain}| u_{x}^{(i)} - Y_{hx}^{(i)}|^{2} dx + \int_{\domain} \frac{1}{2} \Big{|} \frac{u^{(i)}_{x}}{|u^{(i)}_{x}|} -\frac{Y^{(i)}_{hx}}{|Y^{(i)}_{hx}|} \Big{|}^{2} | Y^{(i)}_{hx}| dx \right ) \nonumber \\
 & = J_{1} + J_{2} + J_{3} + J_{4} + J_{5},
\end{align}
where 
\[
 J_{5} = - \int_{\domain} u^{(i)}_{xt} \cdot \Bigg{[} \Big{(} \frac{u^{(i)}_{x}}{|u^{(i)}_{x}|} - \frac{Y^{(i)}_{hx}}{|Y^{(i)}_{hx}|} \Big{)} \frac{|u^{(i)}_{x}| - |Y^{(i)}_{hx}|}{|u^{(i)}_{x}|} + \frac{u^{(i)}_{x}}{|u^{(i)}_{x}|} \, \frac{1}{2} \Big{|} \frac{u^{(i)}_{x}}{|u^{(i)}_{x}|} - \frac{Y^{(i)}_{hx}}{|Y^{(i)}_{hx}|} \Big{|}^{2} \frac{|Y^{(i)}_{hx}|}{|u^{(i)}_{x}|} \Bigg{]} dx. 
\]

Let us now estimate the terms on the right-hand-side of \eqref{eq:to_estimate}. Using \eqref{eqnormsh} and interpolation estimate \eqref{eq:interpolL2} we infer that 
\begin{align*}
J_{1} &\leq \sqrt{ \langle u_{t}^{(i)}-Y_{ht}^{(i)}, u_{t}^{(i)}-Y_{ht}^{(i)} \rangle_{u_{h}^{(i)}(t)}} \sqrt{ \langle u_{t}^{(i)}-\interpol u_{t}^{(i)}, u_{t}^{(i)}-\interpol u_{t}^{(i)}\rangle_{u_{h}^{(i)}(t)}}
\\
& \leq \sqrt{ \langle u_{t}^{(i)}-Y_{ht}^{(i)}, u_{t}^{(i)}-Y_{ht}^{(i)} \rangle_{u_{h}^{(i)}(t)}} \, \, \frac{2}{c_{0}} \| u_{t}^{(i)}-\interpol u_{t}^{(i)} \|_{L^{2}(\domain)} \\
& \leq \tilde{\delta} \langle u_{t}^{(i)}-Y_{ht}^{(i)}, u_{t}^{(i)}-Y_{ht}^{(i)} \rangle_{u_{h}^{(i)}(t)} + \frac{4}{4 \tilde{\delta} c_{0}^{2}} \| u_{t}^{(i)}-\interpol u_{t}^{(i)} \|_{L^{2}(\domain)}^{2} \\
& \leq \tilde{\delta} \langle u_{t}^{(i)}-Y_{ht}^{(i)}, u_{t}^{(i)}-Y_{ht}^{(i)} \rangle_{u_{h}^{(i)}(t)} + \frac{C_{p}^{2}}{\tilde{\delta} c_{0}^{2}} h^{2} \|u_{t}^{(i)}\|^{2}_{W^{1,2}(\domain)}
\end{align*}
for some $\tilde{\delta} >0$ that will be chosen later on. 
Using \eqref{eq:interpolH1} we obtain that
\begin{align*}
J_{2} & \leq \epsilon \| u_{x}^{(i)} - Y_{hx}^{(i)} \|_{L^{2}(\domain)} \| (u_{t}^{(i)}-\interpol u_{t}^{(i)})_{x} \|_{L^{2}(\domain)} \\
& \leq \frac{\epsilon}{2} \| u_{x}^{(i)} - Y_{hx}^{(i)} \|_{L^{2}(\domain)}^{2} + \frac{\epsilon C_{p}^{2}}{2} h^{2} \|u_{t}^{(i)}\|_{W^{2,2}(\domain)}^{2}.
\end{align*}
Recalling \eqref{innerprod} and \eqref{innerprodh}, we can write 
\begin{align*}
J_{3} &= \int_{\domain} ( u_{t}^{(i)} \cdot (\nu_{h}^{(i)}-\nu^{(i)})( ( \interpol u_{t}^{(i)}-Y_{ht}^{(i)}) \cdot \nu_{h}^{(i)} ) |u_{hx}^{(i)}| dx \\ 
& \quad + \int_{\domain} ( u_{t}^{(i)} \cdot \nu^{(i)})( ( \interpol u_{t}^{(i)}-Y_{ht}^{(i)}) \cdot (\nu_{h}^{(i)} -\nu^{(i)}) ) |u_{hx}^{(i)}| dx \\
& \quad + \int_{\domain} ( u_{t}^{(i)} \cdot \nu^{(i)})( ( \interpol u_{t}^{(i)}-Y_{ht}^{(i)}) \cdot \nu^{(i)}) (|u_{hx}^{(i)}|-|u_{x}^{(i)}|) dx \\
& \quad + \epsilon \int_{\domain} (u_{t}^{(i)} \cdot (\tau_{h}^{(i)}-\tau^{(i)})( ( \interpol u_{t}^{(i)}-Y_{ht}^{(i)}) \cdot \tau_{h}^{(i)})|u_{hx}^{(i)}|^{2} dx \\
& \quad + \epsilon \int_{\domain} (u_{t}^{(i)} \cdot \tau^{(i)})( ( \interpol u_{t}^{(i)}-Y_{ht}^{(i)}) \cdot (\tau_{h}^{(i)}-\tau^{(i)} ))|u_{hx}^{(i)}|^{2} dx \\
& \quad + \epsilon \int_{\domain} (u_{t}^{(i)} \cdot \tau^{(i)})( ( \interpol u_{t}^{(i)}-Y_{ht}^{(i)}) \cdot \tau^{(i)})(|u_{hx}^{(i)}|^{2} - |u_{x}^{(i)}|^{2}) dx.
\end{align*}
Therefore, using $|u_{hx}^{(i)}|^{2} - |u_{x}^{(i)}|^{2} \leq (|u_{hx}^{(i)}| + |u_{x}^{(i)}|)(|u_{hx}^{(i)} - u_{x}^{(i)}|)$ in the last term and \eqref{bc0} and \eqref{boundsLE} we infer that 
\begin{align*}
J_{3} &\leq \frac{4}{c_{0}} \|u_{t}^{(i)}\|_{ L^{\infty}(\domain)} \| \nu^{(i)}-\nu_{h}^{(i)} \|_{L^{2}(\domain)} \| \interpol u_{t}^{(i)}-Y_{ht}^{(i)} \|_{L^{2}(\domain)} \\
& \quad + \|u_{t}^{(i)} \|_{ L^{\infty}(\domain)}\| u_{x}^{(i)}-u_{hx}^{(i)}\|_{L^{2}(\domain)} \| \interpol u_{t}^{(i)}-Y_{ht}^{(i)}\|_{L^{2}(\domain)} \\
& \quad + \epsilon \frac{8}{c_{0}^2} \|u_{t}^{(i)}\|_{ L^{\infty}(\domain)} \| \tau^{(i)}-\tau_{h}^{(i)} \|_{L^{2}(\domain)} \| \interpol u_{t}^{(i)}-Y_{ht}^{(i)} \|_{L^{2}(\domain)} \\
& \quad + \epsilon \frac{3}{c_{0}} \|u_{t}^{(i)} \|_{ L^{\infty}(\domain)} \| u_{x}^{(i)}-u_{hx}^{(i)}\|_{L^{2}(\domain)} \| \interpol u_{t}^{(i)}-Y_{ht}^{(i)}\|_{L^{2}(\domain)}.
\end{align*}
Again using \eqref{bc0}, a short calculation show that 
\[
 |\tau^{(i)}-\tau_{h}^{(i)}| \leq \frac{2}{c_{0}} |u^{(i)}_{x} - u^{(i)}_{hx}|, \quad |\nu^{(i)}-\nu_{h}^{(i)}| \leq \frac{2}{c_{0}} |u^{(i)}_{x} - u^{(i)}_{hx}|.
\]
Using furthermore that $\Gamma_{h} \in \mathcal{B}_{h}$, \eqref{eq:interpolL2}, $\epsilon \leq 1$, and the embedding $ W^{1,2}(\domain) \hookrightarrow L^{\infty}(\domain) $ we can deduce that 
\begin{align*}
J_{3} &\leq (\frac{8}{c_{0}^2} + 1) \|u_{t}^{(i)} \|_{ L^{\infty}(\domain)}\| u_{x}^{(i)}-u_{hx}^{(i)}\|_{L^{2}(\domain)} \| \interpol u_{t}^{(i)}-Y_{ht}^{(i)}\|_{L^{2}(\domain)} \\
& \quad + \epsilon (\frac{16}{c_{0}^3} + \frac{3}{c_{0}}) \|u_{t}^{(i)} \|_{ L^{\infty}(\domain)} \| u_{x}^{(i)}-u_{hx}^{(i)}\|_{L^{2}(\domain)} \| \interpol u_{t}^{(i)}-Y_{ht}^{(i)}\|_{L^{2}(\domain)} \displaybreak[0] \\
& \leq C(c_{0}) \|u_{t}^{(i)} \|_{ L^{\infty}(\domain)} K h e^{\frac{Mt}{2}} \big{(} C_{p} h \|u_{t}^{(i)}\|_{W^{1,2}(\domain)} + \| u_{t}^{(i)}-Y_{ht}^{(i)}\|_{L^{2}(\domain)} \big{)}\\
& \leq C h^{2} K e^{\frac{Mt}{2}} \|u_{t}^{(i)} \|_{W^{1,2}(\domain)}^{2} + \frac{C}{\epsilon \tilde{\delta}} K^{2} h^{2} e^{Mt} \|u_{t}^{(i)} \|_{ L^{\infty}(\domain)}^{2} + \tilde{\delta} \epsilon \| u_{t}^{(i)}-Y_{ht}^{(i)}\|_{L^{2}(\domain)}^{2} 
\end{align*}
with some $\tilde{\delta}>0$ to be chosen appropriately later on and a constant $C = C(c_{0},C_{p})$. 
Next, we have using \eqref{boundsLEY} and an interpolation inequality that
\begin{align*}
J_{4} \leq \frac{2}{c_{0}} \int_{\domain} \frac{1}{2} \Big{|} \frac{u^{(i)}_{x}}{|u^{(i)}_{x}|} - \frac{Y^{(i)}_{hx}}{|Y^{(i)}_{hx}|} \Big{|}^{2} |Y^{(i)}_{hx}| dx + C h^{2} \|u_{t}^{(i)}\|_{W^{2,2}(\domain)}^{2}
\end{align*}
with $C = C(C_{p})$. Finally, using \eqref{bc0} and \eqref{boundsLEY} we infer that 
\begin{align*}
J_{5} &\leq \frac{\|u_{xt}^{(i)}\|_{L^{\infty}(\domain)}}{c_{0}} \Bigg{[} \left \| \frac{u^{(i)}_{x}}{|u^{(i)}_{x}|} - \frac{Y^{(i)}_{hx}}{|Y^{(i)}_{hx}|} \right \|_{L^2(\domain)} \| |u^{(i)}_{x}| - |Y^{(i)}_{hx}| \|_{L^2(\domain)} + \int_{\domain} \frac{1}{2} \Big{|} \frac{u^{(i)}_{x}}{|u^{(i)}_{x}|} - \frac{Y^{(i)}_{hx}}{|Y^{(i)}_{hx}|} \Big{|}^{2} |Y^{(i)}_{hx}| dx \Bigg{]} \\
&\leq \|u_{xt}^{(i)}\|_{L^{\infty}(\domain)} \left(\frac{2}{c_{0}^{2}} + \frac{1}{c_{0}} \right) \int_{\domain} \frac{1}{2} \Big{|} \frac{u^{(i)}_{x}}{|u^{(i)}_{x}|} - \frac{Y^{(i)}_{hx}}{|Y^{(i)}_{hx}|} \Big{|}^{2} |Y^{(i)}_{hx}| dx + \frac{\|u_{xt}^{(i)}\|_{L^{\infty}(\domain)}}{2 c_{0}} \,\| u_{x}^{(i)} -Y_{hx}^{(i)}\|_{L^{2}(\domain)}^{2}\\
& \leq C(1 + \|u_{t}^{(i)}\|^{2}_{W^{2,2}(\domain)}) \int_{\domain} \frac{1}{2} \Big{|} \frac{u^{(i)}_{x}}{|u^{(i)}_{x}|} - \frac{Y^{(i)}_{hx}}{|Y^{(i)}_{hx}|} \Big{|}^{2} |Y^{(i)}_{hx}| dx 
+ C(1 + \|u_{t}^{(i)}\|^{2}_{W^{2,2}(\domain)})\| u_{x}^{(i)} -Y_{hx}^{(i)}\|_{L^{2}(\domain)}^{2}
\end{align*}
where $C=C(c_{0})$.
All in all, from \eqref{eq:to_estimate}, \eqref{eq:estim_below}, and the above estimates of the $J_{i}$ we obtain that 
\begin{align*}
 \frac{1}{2} \langle u_{t}^{(i)} & - Y_{ht}^{(i)}, u_{t}^{(i)} - Y_{ht}^{(i)} \rangle_{u_{h}^{(i)}} + \epsilon \frac{c_{0}^{2}}{8} \| u_{t}^{(i)}-Y_{ht}^{(i)} \|_{L^{2}(\domain)}^{2} \\ 
& \quad + \frac{d}{dt} \Bigg{(} \frac{\epsilon}{2} \| u_{x}^{(i)} - Y_{hx}^{(i)} \|_{L^{2}(\domain)}^{2} + \int_{\domain} \frac{1}{2} \Big{|} \frac{u^{(i)}_{x}}{|u^{(i)}_{x}|} - \frac{Y^{(i)}_{hx}}{|Y^{(i)}_{hx}|} \Big{|}^{2} |Y^{(i)}_{hx}| dx \Bigg{)} \\
 &\leq \tilde{\delta} \langle u_{t}^{(i)} - Y_{ht}^{(i)}, u_{t}^{(i)} - Y_{ht}^{(i)} \rangle_{u_{h}^{(i)}(t)} + \frac{1}{\tilde{\delta}} C h^{2} \\
 & \quad + \frac{\epsilon}{2} \| u_{x}^{(i)} - Y_{hx}^{(i)} \|_{L^{2}(\domain)}^{2} + C h^{2}\|u_{t}^{(i)}\|^{2}_{W^{2,2}(\domain)} \\
 & \quad + \tilde{\delta} \epsilon \| u_{t}^{(i)}-Y_{ht}^{(i)}\|_{L^{2}(\domain)}^{2} + C K e^{\frac{Mt}{2}} h^{2} + \frac{1}{\tilde{\delta}} C K^{2} e^{Mt} h^{2} \\
 & \quad + C (1 + \|u_{t}^{(i)}\|^{2}_{W^{2,2}(\domain)})\int_{\domain} \frac{1}{2} \Big{|} \frac{u^{(i)}_{x}}{|u^{(i)}_{x}|} - \frac{Y^{(i)}_{hx}}{|Y^{(i)}_{hx}|} \Big{|}^{2} |Y^{(i)}_{hx}| dx + C h^{2} \|u_{t}^{(i)}\|^{2}_{W^{2,2}(\domain)}\\
 & \quad + C (1 + \|u_{t}^{(i)}\|^{2}_{W^{2,2}(\domain)})\| u_{x}^{(i)} -Y_{hx}^{(i)}\|_{L^{2}(\domain)}^{2},
\end{align*}
where $C>0$ depends on $\epsilon$, $c_{0}$, $C_{p}$, and $\Gamma$ in terms of norms of the $u^{(i)}$ with respect to the spaces specified in Assumption~\ref{ass_cont_sol}. 
Note that $u^{(i)}_{t} \in L^{2}((0,T), W^{2,2}(\domain))$ only, whence we have to keep the term $\|u_{t}^{(i)}\|^{2}_{W^{2,2}(\domain)}$ until we later integrate with respect to time. 
Choosing now $\tilde{\delta} = c_{0}^{2} / 16 < 1/4$ (thanks to $c_{0} \leq 1$) we see that
\begin{align}
 \frac{1}{4} \langle u_{t}^{(i)} & - Y_{ht}^{(i)}, u_{t}^{(i)} - Y_{ht}^{(i)} \rangle_{u_{h}^{(i)}} + \epsilon \frac{c_{0}^{2}}{16} \| u_{t}^{(i)}-Y_{ht}^{(i)} \|_{L^{2}(\domain)}^{2} \nonumber \\ 
& \quad + \frac{d}{dt} \Bigg{(} \frac{\epsilon}{2} \| u_{x}^{(i)} - Y_{hx}^{(i)} \|_{L^{2}(\domain)}^{2} + \int_{\domain} \frac{1}{2} \Big{|} \frac{u^{(i)}_{x}}{|u^{(i)}_{x}|} - \frac{Y^{(i)}_{hx}}{|Y^{(i)}_{hx}|} \Big{|}^{2} |Y^{(i)}_{hx}| dx \Bigg{)} \nonumber \\
 & \leq C \big{(} 1 + K^{2} e^{Mt} \big{)} h^{2} + C h^{2}\|u_{t}^{(i)}\|^{2}_{W^{2,2}(\domain)} \label{eq:estim1} \\
 & \quad + C (1 + \|u_{t}^{(i)}\|^{2}_{W^{2,2}(\domain)})\Bigg{(} \frac{\epsilon}{2}\| u_{x}^{(i)} - Y_{hx}^{(i)} \|_{L^{2}(\domain)}^{2} + \int_{\domain} \frac{1}{2} \Big{|} \frac{u^{(i)}_{x}}{|u^{(i)}_{x}|} - \frac{Y^{(i)}_{hx}}{|Y^{(i)}_{hx}|} \Big{|}^{2} |Y^{(i)}_{hx}| dx \Bigg{)}. \nonumber
\end{align}
By Assumption~\ref{ass_cont_sol}, at time $t=0$ we have that 
\[
 \| u_{x}^{(i)}(0) - Y_{hx}^{(i)}(0) \|_{L^{2}(\domain)}^{2} = \| u_{0x}^{(i)} - (\interpol u_{0}^{(i)})_{x} \|_{L^{2}(\domain)}^{2} \leq C h^2 \| u_{0}^{(i)} \|_{W^{2,2}(\domain)}^{2}, \\
\]
and, using \eqref{boundsLE}, that 
\begin{align*}
 \int_{\domain} \frac{1}{2} \Big{|}\frac{u^{(i)}_{x}(0)}{|u^{(i)}_{x}(0)|} - \frac{Y^{(i)}_{hx}(0)}{|Y^{(i)}_{hx}(0)|} \Big{|}^{2} | Y^{(i)}_{hx}(0)| dx 
 & = \int_{\domain} \frac{1}{2} \Big{|}\frac{u^{(i)}_{0x}}{|u^{(i)}_{0x}|} - \frac{(\interpol u_{0}^{(i)})_{x}}{|(\interpol u_{0}^{(i)})_{x}|} \Big{|}^{2} |(\interpol u_{0}^{(i)})_{x}| dx \\
 & \leq C(c_{0}) \int_{\domain} |u^{(i)}_{0x} - (\interpol u_{0}^{(i)})_{x}|^{2} dx \leq C(c_{0}) h^2 \| u_{0}^{(i)} \|_{W^{2,2}(\domain)}^{2}. 
\end{align*}
Integrating \eqref{eq:estim1} on the time interval $(0,t)$ with $t \leq T_{h}$ we thus obtain that 
\begin{align}
\int_{0}^{t} & \frac{1}{4} \langle (u_{t}^{(i)}-Y_{ht}^{(i)})(t'), (u_{t}^{(i)}-Y_{ht}^{(i)})(t') \rangle_{u_{h}^{(i)}(t')} + \epsilon \frac{c_{0}^{2}}{16} \| u_{t}^{(i)}-Y_{ht}^{(i)} \|_{L^{2}(\domain)}^{2}(t') dt' \nonumber \\
& \quad + \frac{\epsilon}{2}\| u_{x}^{(i)}(t) - Y_{hx}^{(i)}(t) \|_{L^{2}(\domain)}^{2} + \int_{\domain} \frac{1}{2} \Big{|}\frac{u^{(i)}_{x}(t)}{|u^{(i)}_{x}(t)|} -\frac{Y^{(i)}_{hx}(t)}{|Y^{(i)}_{hx}(t)|} \Big{|}^{2} |Y^{(i)}_{hx}(t)| dx \nonumber \\
 & \leq C (1 + \frac{K^{2}}{M} e^{Mt}) h^{2} \nonumber \\
& \quad + C \int_{0}^{t} (1 + \|u_{t}^{(i)} (t')\|^{2}_{W^{2,2}(\domain)})\Bigg{(} \frac{\epsilon}{2}\| u_{x}^{(i)}(t') - Y_{hx}^{(i)}(t') \|_{L^{2}(\domain)}^{2} \nonumber \\
& \qquad \qquad + \int_{\domain} \frac{1}{2} \Big{|} \frac{u^{(i)}_{x} (t')}{|u^{(i)}_{x}(t')|} -\frac{Y^{(i)}_{hx} (t')}{|Y^{(i)}_{hx}(t')|} \Big{|}^{2} |Y^{(i)}_{hx}(t')| dx \Bigg{)} dt' \label{eq:estim2}
\end{align}
where $C>0$ depends on $\epsilon$, $c_{0}$, $C_{p}$, $T$, and $\Gamma$. 
A Gronwall argument now yields that
\begin{multline} \label{eq:stima}
 \frac{\epsilon}{2} \| u_{x}^{(i)}(t) - Y_{hx}^{(i)}(t) \|_{L^{2}(\domain)}^{2} 
 + \int_{\domain} \frac{1}{2} \Big{|} \frac{u^{(i)}_{x}(t)}{|u^{(i)}_{x}(t)|} - \frac{Y^{(i)}_{hx}(t)}{|Y^{(i)}_{hx}(t)|} \Big{|}^{2} |Y^{(i)}_{hx}(t)| dx \\
\leq C (1 + \frac{K^{2}}{M} e^{Mt}) h^{2}.
\end{multline}

Using the same ideas employed to show \eqref{boundsLE}, we can choose $h_{0} = h_{0}(C_{p},T,K,M, \epsilon, \Gamma)$ even smaller to ensure that \eqref{boundsLEY} is satisfied with strict inequality signs. This gives a contradiction to the maximality of $T_{h}$. Hence $T_{h} = T$ as claimed. 

Moreover, all estimates obtained so far hold on the whole time interval $[0,T]$. We can deduce \eqref{eq:estim_prop} from \eqref{eq:stima}. The other estimate \eqref{eq:estim2_prop} is then obtained from incorporating \eqref{eq:stima} into \eqref{eq:estim2} and absorbing all constants into $\tilde{C}$. 

Continuous dependence of the solution $(Y_{h}^{(1)}(t),Y_{h}^{(2)}(t),Y_{h}^{(3)}(t))$ on the data (in particular, on $\Gamma_{h}$) follows from standard ODE theory. For instance, see \cite{Teschl2012ODEs}, Theorem 2.8, where we note that convergence $\| u_{h,j} - u_{h} \|_{\mathcal{Z}_{h}} \to 0$ as $j \to \infty$ for functions $u_{h,j}, u_{h} \in \mathcal{Z}_{h}$ also implies that 
\[
 \sup_{t \in [0,T]} \| u_{h,j} - u_{h} \|_{W^{1,\infty}(\domain)} \to 0 \quad \mbox{as } j \to \infty
\]
because $S_{h}$ is finite dimensional and, thus, norms are equivalent on this space. 
\end{proof}

This result including the stability estimates is key for the fixed point argument that we use to establish the following convergence result:

\begin{teo} \label{th_conv}
Let $h \leq h_{0}=h_{0}(\epsilon, T, \Gamma, c_{0}, C_{p})$. Problem~\ref{prob_semidis} admits a unique solution $\Gamma_{h}$ with $\Gamma_{h}(t) = (u^{(1)}_{h}(t), u^{(2)}_{h}(t), u^{(3)}_{h}(t)) \in \triod_{P,h}$, $t \in [0,T]$ that satisfies the estimates 
\begin{align}\label{error-est}
\int_{0}^{T}\| u_{t}^{(i)}-u_{ht}^{(i)} \|_{L^{2}(\domain)}^{2}(t') dt' + \max_{t \in [0,T]} \| u_{x}^{(i)}(t) - u_{hx}^{(i)}(t) \|_{L^{2}(\domain)}^{2} \leq C h^{2},
\end{align}
for $i=1,2,3$, and a constant $C>0$ depending on $c_{0}$, $T$, $\epsilon$, $C_{p}$, and norms of the $u^{(i)}$ as in Assumption~\ref{ass_cont_sol}. 
\end{teo}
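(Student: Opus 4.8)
The plan is to apply the Schauder fixed point theorem to the map $\mathcal{F}: \mathcal{B}_h \to \mathcal{X}_h$ that sends a given discrete triod $\Gamma_h \in \mathcal{B}_h$ to the solution $(Y_h^{(1)}, Y_h^{(2)}, Y_h^{(3)})$ of Problem~\ref{prob_fixpoint}, which is well-defined by Proposition~\ref{prop_fixpoint}. A fixed point $\Gamma_h = \mathcal{F}(\Gamma_h)$ satisfies $Y_{hx}^{(i)} = u_{hx}^{(i)}$, so the right-hand side of \eqref{eq:Ynew} coincides with the right-hand side of the semi-discrete problem, and hence the fixed point solves Problem~\ref{prob_semidis}. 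The estimate \eqref{error-est} then follows from \eqref{eq:estim_prop} and \eqref{eq:estim2_prop}, absorbing the factors $(1 + K^2/M)e^{MT}$ and the remaining constants into $C$ (after $M$ is fixed, say $M = K^2$, so that $1 + K^2/M = 2$).

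First I would verify that $\mathcal{F}$ maps $\mathcal{B}_h$ into itself. This is the crucial use of the quantitative estimate \eqref{eq:estim_prop}: it gives
\[
 \sup_{t \in [0,T]} e^{-Mt} \| u_x^{(i)}(t) - Y_{hx}^{(i)}(t) \|_{L^2(\domain)}^2 \leq \Big(1 + \frac{K^2}{M}\Big) C h^2,
\]
and I must choose the constants $K$ and $M$ so that $(1 + K^2/M) C \leq K^2$, which is possible by first taking $M$ large relative to $C$ (note $C$ in \eqref{eq:estim_prop} does not depend on $K$ or $M$) and then $K$ large enough to also satisfy the non-emptiness requirement $K \geq C_p \sup_t \| u^{(i)}(t) \|_{W^{2,2}(\domain)}$. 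The pointwise conditions (boundary values $P_i$, the triple junction identification at $x=0$, and the initial condition $Y_h^{(i)}(0) = \interpol u_0^{(i)}$) are built into Problem~\ref{prob_fixpoint}, so $\mathcal{F}(\Gamma_h) \in \triod_{P,h}$ for all $t$ and the initial data are correct; hence $\mathcal{F}(\Gamma_h) \in \mathcal{B}_h$. Together with the facts already collected in the excerpt — $\mathcal{B}_h$ is non-empty, bounded, closed, and convex in $\mathcal{X}_h$ — this sets up the hypotheses of Schauder's theorem, modulo continuity and compactness.

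Next I would establish continuity and compactness of $\mathcal{F}$. Continuity of $\mathcal{F}$ on $\mathcal{B}_h$ is exactly the continuous dependence statement in Proposition~\ref{prop_fixpoint}: if $\Gamma_h^{(j)} \to \Gamma_h$ in $\mathcal{X}_h$ then the coefficients of the ODE system depend continuously on $\Gamma_h$ (using that $S_h$ is finite dimensional, so $\mathcal{Z}_h$-convergence implies $W^{1,\infty}$-convergence and the length elements stay bounded away from $0$), whence $Y_h^{(i,j)} \to Y_h^{(i)}$ in $\mathcal{Z}_h$ by standard ODE continuous-dependence results. For compactness I would argue that the image $\mathcal{F}(\mathcal{B}_h)$ is relatively compact: the $Y_h^{(i)}(t)$ lie in the fixed finite-dimensional space $S_h^2$ and are uniformly bounded (by boundedness of $\mathcal{B}_h$ and the bounds in the proposition), while \eqref{eq:estim2_prop} together with finite-dimensionality gives a uniform $L^2$-in-time bound on $Y_{ht}^{(i)}$, hence a uniform modulus of continuity in $t$ via $\| Y_h^{(i)}(t) - Y_h^{(i)}(s) \|_{L^2} \leq \int_s^t \| Y_{ht}^{(i)} \| dt' \leq |t-s|^{1/2} \big(\int_0^T \| Y_{ht}^{(i)} \|_{L^2}^2\big)^{1/2}$; by Arzelà--Ascoli in $C^0([0,T], S_h^2)$ the image is precompact. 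Schauder then yields a fixed point.

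Finally, I would address uniqueness of the solution to Problem~\ref{prob_semidis} separately, since Schauder gives only existence. Here I expect the main obstacle: one takes two solutions $\Gamma_h$, $\hat{\Gamma}_h$, writes the weak formulation for the difference, tests with the discrete time derivative of the difference, and runs a Gronwall argument — but the nonlinearity of the diffusion term $u_{hx}/|u_{hx}|$ and the dependence of the metric $\langle \cdot,\cdot\rangle_{u_h^{(i)}}$ and of $\nu_h^{(i)}, \tau_h^{(i)}$ on the solution itself make the energy estimate more delicate than the fixed-point estimate, which froze those quantities at the given $\Gamma_h$. One exploits \eqref{boundsLE}, \eqref{eqnormsh}, the Lipschitz bounds $|\tau^{(i)} - \tau_h^{(i)}| \leq (2/c_0)|u_x^{(i)} - u_{hx}^{(i)}|$ and the analogous one for the normals (now applied to the difference of two discrete solutions), and inverse estimates to control the $L^\infty$ norms of discrete functions by their $L^2$ norms at the price of powers of $h$; one then absorbs the problematic terms for $h$ small enough and concludes $\Gamma_h = \hat{\Gamma}_h$ by Gronwall with zero initial difference. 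Uniqueness of $\Gamma_h$ (not merely of the fixed point) is what makes \eqref{error-est} unambiguous. The remaining bookkeeping — fixing $M$, combining \eqref{eq:estim_prop} and \eqref{eq:estim2_prop}, and passing from the weighted $e^{-Mt}$ estimate to the plain $\max_{t}$ estimate in \eqref{error-est} — is routine.
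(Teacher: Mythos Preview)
Your proposal is correct and follows essentially the same route as the paper: define the map on $\mathcal{B}_h$ via Proposition~\ref{prop_fixpoint}, choose $K$ and $M$ so that \eqref{eq:estim_prop} yields the self-map property, use \eqref{eq:estim2_prop} for compactness, apply Schauder, and read off \eqref{error-est} from the proposition. The one substantive difference is uniqueness: you outline a PDE-style energy/Gronwall argument for the difference of two discrete solutions, whereas the paper simply observes that Problem~\ref{prob_semidis} is a finite-dimensional ODE with locally Lipschitz right-hand side (as in the proof of Proposition~\ref{prop_fixpoint}), so local uniqueness is immediate, and the error estimate keeps $|u^{(i)}_{hx}|$ bounded away from zero, allowing the local argument to be continued to all of $[0,T]$. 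Your approach would also work but is more laborious; the ODE viewpoint avoids the delicate handling of the nonlinear metric and the inverse-estimate bookkeeping altogether.
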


\begin{proof}
On the non-empty, convex, bounded, closed set $\mathcal{B}_{h}\subset \mathcal{X}_{h}$ consider the operator
\begin{align*} 
F: \, &\mathcal{B}_{h} \to C^{0}([0,T_{h}], S^{2}_{h})^{3}, \quad 
\Gamma_{h} \mapsto F(\Gamma_{h}) := (Y_{h}^{(1)}, Y_{h}^{(2)}, Y_{h}^{(3)})
\end{align*}
where the maps $Y_{h}^{(i)} \in \mathcal{Z}_{h}$, $i=1,2,3$, are the solution to Problem \ref{prob_fixpoint} from Proposition \ref{prop_fixpoint}. By that proposition $F$ is a continuous map. 

We choose $K$ and $M$ such that $K^{2} \geq 2C$ and $M \geq 2C$, with $C$ the constant appearing in \eqref{eq:estim_prop}. Then $(1 + \tfrac{K^{2}}{M} ) C \leq K^2$, and from \eqref{eq:estim_prop} we obtain that 
\begin{align*}
 \sup_{ t \in [0,T]}e^{-Mt}\| u_{x}^{(i)}(t) - Y_{hx}^{(i)}(t) \|_{L^{2}(\domain)}^{2} \leq K^{2} h^{2} \quad \forall t \in [0,T]. 
\end{align*}
This implies that $F(\mathcal{B}_{h}) \subset \mathcal{B}_{h}$. 

By \eqref{eq:estim2_prop} and the fact that $Y_{h}^{(i)}(0)=I_{h}u_{0}^{(i)}$, $i=1,2,3$, it follows that $F(\mathcal{B}_{h})$ is a bounded subset of $W^{1,2}((0,T),S_{h}^2)^3$. As $S_{h}$ is finite dimensional, the embedding $W^{1,2}((0,T),S_{h}^2)^3 \hookrightarrow C^{0}([0,T],S_{h}^2)^3$ is compact. Therefore, $F$ is a compact operator. 

The Schauder fixed point theorem thus yields the existence of a fixed point $F(\Gamma_{h})=\Gamma_{h}$. The error estimate \eqref{error-est} for this fixed point follows immediately from \eqref{eq:estim_prop} and \eqref{eq:estim2_prop}. 

Regarding uniqueness one can proceed as in the proof of Proposition \ref{prop_fixpoint} by formulating the problem as an ODE. The properties of the initial data ensure short time uniqueness, and thanks to the error estimates this argument can be extended to the whole time interval. 
\end{proof}

\section{Numerical tests}

\subsection{Time discretisation}

To validate the theoretical findings and further explore the properties of the finite element scheme we discretise in time with a simple first order IMEX-scheme so that a linear problem is obtained in each time step. 

Let $\delta = T/N > 0$ denote the time step size for some $N \in \mbN$. Let $t_{n} := n \delta$, $n=0, \dots, N$, and we write $u^{(i),n}$ for the approximation of $u^{(i)}(t_{n}, \cdot )$. 

\begin{prob} \label{prob_volldis}
Let $\Gamma_{h}^{0}=(U^{(1),0}, U^{(2),0}, U^{(3),0})= (\interpol u_{0}^{(1)}, \interpol u_{0}^{(2)}, \interpol u_{0}^{(3)})$. For $n =1,2 , \ldots,N$ compute $\Gamma_{h}^{n} = (U^{(1),n},U^{(2),n},U^{(3),n}) \in \triod_{P,h}$, such that for all $(\varphi^{(1)}_{h}, \varphi^{(2)}_{h}, \varphi^{(3)}_{h}) \in \triod_{0,h}$
 \begin{multline}\label{weakF}
 \sum_{i=1}^{3} \Bigg{(} \int_{\domain} (\frac{U^{(i),n} - U^{(i),n-1}}{\delta} \cdot \frac{(U_{x}^{(i),n-1})^{\perp}}{|U_{x}^{(i),n-1}|}) (\varphi_{h}^{(i)} \cdot \frac{(U_{x}^{(i), n-1})^{\perp}}{|U_{x}^{(i),n-1}|}) |U_{x}^{(i), n-1}| dx \\
 + \epsilon \int_{\domain} (\frac{U^{(i),n} - U^{(i),n-1}}{\delta} \cdot \frac{U_{x}^{(i), n-1}}{|U_{x}^{(i), n-1}|}) (\varphi_{h}^{(i)} \cdot \frac{U_{x}^{(i), n-1}}{|U_{x}^{(i), n-1}|}) |U_{x}^{(i), n-1}|^{2} dx \Bigg{)} \\
 +\sum_{i=1}^{3} \Bigg{(} \epsilon \int_{\domain} U_{x}^{(i), n} \cdot \varphi_{hx}^{(i)} dx + \int_{\domain} \frac{U_{x}^{(i),n}}{|U_{x}^{(i), n-1}|} \cdot \varphi_{hx}^{(i)} dx \Bigg{)}=0.
 \end{multline}
\end{prob}

Problem \eqref{weakF} can be written as a system of linear equations that incorporates the boundary and triple junction conditions. Let $e_{1} = (1,0),\, e_{2} = (0,1) \in \R^2$ and recall the notation $\phi_{j}$ for the standard basis functions of $S_{h}$. For $i=1,2,3$ and $m=n-1,n$ let us write 
\[
 U^{(i),m} = \sum_{k=0,\beta=1}^{J,2} U^{(i),m}_{k,\beta} e_{\beta} \phi_{k}, \quad \underline{U}^{(i),m} = \big{(} (U^{(i),m}_{k,1})_{k=0}^{J}, (U^{(i),m}_{k,2})_{k=0}^{J} \big{)} \in \R^{2(J+1)}. 
\]
Define now the symmetric tridiagonal matrices $M^{(i),n-1}, S^{(i),n-1} \in \R^{2(J+1) \times 2(J+1)}$ with the entries 
\begin{align}
 M^{(i),n-1}_{j,k,\alpha,\beta} &:= \int_{\domain} \frac{1}{\delta} (e_{\beta} \phi_{k} \cdot \frac{(U_{x}^{(i),n-1})^{\perp}}{|U_{x}^{(i),n-1}|}) (e_{\alpha} \phi_{j} \cdot \frac{(U_{x}^{(i), n-1})^{\perp}}{|U_{x}^{(i),n-1}|}) |U_{x}^{(i), n-1}| dx \nonumber \\
 & \quad + \epsilon \int_{\domain} \frac{1}{\delta} (e_{\beta} \phi_{k} \cdot \frac{U_{x}^{(i), n-1}}{|U_{x}^{(i), n-1}|}) (e_{\alpha} \phi_{j} \cdot \frac{U_{x}^{(i), n-1}}{|U_{x}^{(i), n-1}|}) |U_{x}^{(i), n-1}|^{2} dx, \label{eq:matmass} \\
 S^{(i),n-1}_{j,k,\alpha,\beta} &:= \int_{\domain} \epsilon e_{\beta} \partial_{x} \phi_{k} \cdot e_{\alpha} \partial_{x} \phi_{j} + \frac{1}{|U_{x}^{(i), n-1}|} e_{\beta} \partial_{x} \phi_{k} \cdot e_{\alpha} \partial_{x} \phi_{j} dx, \label{eq:matstiff}
\end{align}
for $j,k = 0, \dots, J$ and $\alpha,\beta = 1,2$. To incorporate the Dirichlet boundary conditions $U^{(i),n+1}(1) = P_{i}$ the rows corresponding to $j=J$ in \eqref{eq:matmass} and \eqref{eq:matstiff} and the right-hand-side of the system of linear equations are amended as usual. With regards to the other end of the curves consider the space
\[
 \tilde{\mathcal{T}}_{h} := \{ (w^{(1)}_{h},w^{(2)}_{h},w^{(3)}_{h}) \in (S_{h}^{2})^{3} \, | \, w^{(1)}_{h}(0) = w^{(2)}_{h}(0) = w^{(3)}_{h}(0) \}
\]
and the projection $\mathcal{P}_{h} : (S_{h}^2)^3 \to \tilde{\mathcal{T}}_{h}$ defined as follows: Choosing again the $e_{\beta} \phi_{k}$ as a basis of $S_{h}^{2}$, its corresponding matrix is denoted by $P \in \R^{6(J+1) \times 6(J+1)}$ and defines the linear map 
\[
 P : \big{(} (V^{(i)}_{k,1})_{k=0}^{J}, (V^{(i)}_{k,2})_{k=0}^{J} \big{)}_{i=1}^3 \mapsto \big{(} (W^{(i)}_{k,1})_{k=0}^{J}, (W^{(i)}_{k,2})_{k=0}^{J} \big{)}_{i=1}^3 
\]
where for $i=1,2,3$ and $\beta = 1,2$ 
\[ 
 W^{(i)}_{0,\beta} = \frac{1}{3} \sum_{\ell=1}^{3} V^{(\ell)}_{0,\beta}, \qquad W^{(i)}_{k,\beta} = V^{(i)}_{k,\beta}, \quad k=1, \dots, J. 
\] 
Note that then the functions $w^{(i)}_{h} \in S_{h}^{2}$, $i=1,2,3$, given by $w^{(i)}_{h} = \sum_{k=0,\beta=1}^{J,2} W^{(i),m}_{k,\beta} e_{\beta} \phi_{k}$ indeed satisfy $w^{(1),n}_{h}(0) = w^{(2),n}_{h}(0) = w^{(3),n}_{h}(0)$. Moreover, the matrix $P$ is symmetric. We also remark that the functions $(\mathcal{P}_{h}(e_{\alpha} \phi_{j}),0,0)$, $(0,\mathcal{P}_{h}(e_{\alpha} \phi_{j}),0)$, and $(0,0,\mathcal{P}_{h}(e_{\alpha} \phi_{j}))$ for $\alpha=1,2$ and $j=0, \dots, J-1$ span $\triod_{0,h}$. Furthermore, we note that $\mathcal{P}_{h} (U^{(1),m},U^{(2),m},U^{(3),m}) = (U^{(1),m},U^{(2),m},U^{(3),m})$, $m=n-1,n$, as both triples are elements of $\triod_{P,h}$. Altogether, the problem \eqref{weakF} can thus be written in the matrix-vector form 
\begin{multline} \label{eq:SLE}
 P 
 \begin{pmatrix} 
 (M+S)^{(1),n-1} & 0 & 0 \\ 
 0 & (M+S)^{(2),n-1} & 0 \\
 0 & 0 & (M+S)^{(3),n-1} 
 \end{pmatrix} 
 P 
 \begin{pmatrix}
 \underline{U}^{(1),n} - \underline{U}^{(1),n-1} \\
 \underline{U}^{(2),n} - \underline{U}^{(2),n-1} \\
 \underline{U}^{(3),n} - \underline{U}^{(3),n-1} 
 \end{pmatrix} 
 \\
 = - P 
 \begin{pmatrix} 
 S^{(1),n-1} & 0 & 0 \\ 
 0 & S^{(2),n-1} & 0 \\
 0 & 0 & S^{(3),n-1} 
 \end{pmatrix} 
 P 
 \begin{pmatrix}
 \underline{U}^{(1),n-1} \\
 \underline{U}^{(2),n-1} \\
 \underline{U}^{(3),n-1} 
 \end{pmatrix} 
\end{multline}
Whilst the projection matrix is symmetric, the system matrix is not after manipulating the matrix entries of $M^{(i),n-1}$ and $S^{(i),n-1}$ to incorporate the Dirichlet boundary conditions. Nevertheless, we could use a conjugate gradient iteration to solve the system thanks to choosing the previous solution as initial guess. The corresponding finite element functions satisfy the Dirichlet boundary conditions already, whence the residuals and, thus, all search directions and iterates are in a subspace restricted to which the system matrix is a symmetric operator. 

\begin{table}
{\footnotesize
\begin{center}
\begin{tabular}{|r|l||l|l|r|l|} \hline 
$l$ & $\epsilon_{l} = 0.3^{l-1}$ & $\lambda_{\max}(\epsilon_{l})$ & $\lambda_{\min}(\epsilon_{l})$ & $\cond_{2}(\epsilon_{l})$ & $\eoc_{l-1,l}$ \\ \hline \hline 
 1 & 1 & 2.0025 & 0.33758 & 5.9 & -- \\ \hline 
 2 & 0.3 & 2.5482 & 0.14957 & 17.0 & -0.8763 \\ \hline 
 3 & 0.09 & 2.8415 & 0.050742 & 56.0 & -0.9884 \\ \hline 
 4 & 0.027 & 2.9451 & 0.016172 & 182.1 & -0.9795 \\ \hline 
 5 & 0.0081 & 2.9787 & 0.0051151 & 582.3 & -0.9655 \\ \hline 
 6 & 0.00243 & 2.9894 & 0.0016401 & 1822.7 & -0.9478 \\ \hline 
 7 & 0.000729 & 2.9928 & 0.00054014 & 5540.8 & -0.9234 \\ \hline 
 8 & 0.0002187 & 2.9939 & 0.00018427 & 16247.0 & -0.8935 \\ \hline 
 9 & 6.561e-05 & 2.9952 & 6.4619e-05 & 46351.0 & -0.8707 \\ \hline 
10 & 1.9683e-05 & 2.9964 & 2.1764e-05 & 137680.0 & -0.9042 \\ \hline 
11 & 5.9049e-06 & 2.9968 & 6.8319e-06 & 438640.0 & -0.9624 \\ \hline 
\end{tabular}
\end{center}
}
\caption{Data on the diagonal block matrix $\diag(M^{(1),0},M^{(2),0},M^{(3),0})$ with $M^{(i),0}$ as defined in 
(\ref{eq:matmass}) after row equilibration (division of each row by the diagonal entry). 
For several decreasing values of $\epsilon$ we list the largest eigenvalue $\lambda_{\max}(\epsilon)$, 
the smallest eigenvalue $\lambda_{\min}(\epsilon)$, the condition number 
$\cond_{2} = \lambda_{\max}(\epsilon) / \lambda_{\min}(\epsilon)$ and its \emph{experimental order of convergence} 
$\eoc_{l,l-1} = (\log(cond_{2}(\epsilon_{l-1})) - \log(cond_{2}(\epsilon_{l}))) / (\log(\epsilon_{l-1}) - \log(\epsilon_{l})$. 
The functions $U^{(i),0}$ required for the assembly were from the example defined in Subsection \ref{subsec:conv}, and 
the discretisation parameters $J=20$, $h = 0.05$ and $\delta = h^2 = 0.0025$ were fixed.
} 
\label{tab:conditioning}
\end{table}

\begin{table}
{\footnotesize
\begin{center}
\begin{tabular}{|r|l|l||r|r|r|} \hline 
$J$ & $h$ & $\delta$ & $\cond_{2}(10^{-1})$ & $\cond_{2}(10^{-5})$ & ratio \\ \hline \hline 
 10 & 0.1 & 0.004 & 55.34 & 113.9 & 2.058 \\ \hline 
 16 & 0.0625 & 0.0015625 & 98.29 & 281.4 & 2.863 \\ \hline 
 24 & 0.041667 & 0.00069444 & 142.37 & 622.32 & 4.371 \\ \hline 
 36 & 0.027778 & 0.00030864 & 181.33 & 1380.7 & 7.614 \\ \hline 
 48 & 0.020833 & 0.00017361 & 202.07 & 2425.5 & 12.003 \\ \hline 
 64 & 0.015625 & 9.7656e-05 & 217.28 & 4239.1 & 19.510 \\ \hline 
\end{tabular}
\end{center}
}
\caption{Condition numbers (ratio of the largest to the smallest eigenvalue) for $\epsilon = 10^{-1}$ (column 4) and 
$\epsilon = 10^{-5}$ (column 5) of the system matrix in \eqref{eq:SLE} ($n=1$) for varying values of $J$ with $h=1/J$ and 
$\delta = 0.4h^2$. The last column contains the ratio $\cond_{2}(10^{-5}) / \cond_{2}(10^{-1})$. 
The functions $U^{(i),0}$ required for the assembly were chosen as in the example defined in Subsection \ref{subsec:conv}.} 
\label{tab:conditioning2}
\end{table}

\begin{rem}[Impact of $\epsilon$ on the conditioning] \label{rem:conditioning}
 As the convergence speed of the conjugate gradient method typically depends on the conditioning we looked at the impact of $\epsilon$ on the ratio of the largest to the smallest eigenvalue of the matrices in \eqref{eq:SLE}. \\
 Regarding the mass matrices $M^{(i),n-1}$ defined in \eqref{eq:matmass}, the tangential contributions scale linearly in $\epsilon$ in contrast to the normal contributions. Consequently, for fixed step sizes in space and time, the smallest eigenvalue scales with $\epsilon$ and the largest remains of order one. This is also what we observe in practice, see Table \ref{tab:conditioning} for typical data. The $\eocs$ for the condition numbers are close to $-1$ indicating a scaling with $\epsilon^{-1}$. \\
 In turn, the stiffness matrices $S^{(i),n-1}$ defined in \eqref{eq:matstiff} do not degenerate as $\epsilon \to 0$ as long as the length element $| U^{(i),n-1}_{x} |$ doesn't change significantly in dependence of $\epsilon$. In practice, the condition numbers of these matrices display the usual scaling with $h^{-2}$ rather independently of $\epsilon$ and therefore are not explicitely listed. \\
 The projection matrix $P$ does not depend on $\epsilon$ and barely has any impact on the overall conditioning. In conclusion, for relatively large time steps the conditioning is dominated by the stiffness contribution and, thus, by the spatial step size. But if the time steps are relatively small then the $\epsilon$-dependent conditioning of the mass matrix can become dominant. For a typical choice of $\delta = 0.4 h^2$ used in our simulations later on, values of the condition numbers of the full system matrix for differing values of $\epsilon$ are displayed in Table \ref{tab:conditioning2}. The increased conditioning for small $\epsilon$ was felt in terms of higher CG iteration numbers to obtain a given tolerance. But the overall computation times were still acceptable in our simulations, whence no preconditioning was considered. 
\end{rem}

\subsection{Validation of the convergence result}
\label{subsec:conv}

\begin{figure}
\begin{center}
 \includegraphics[width=5cm]{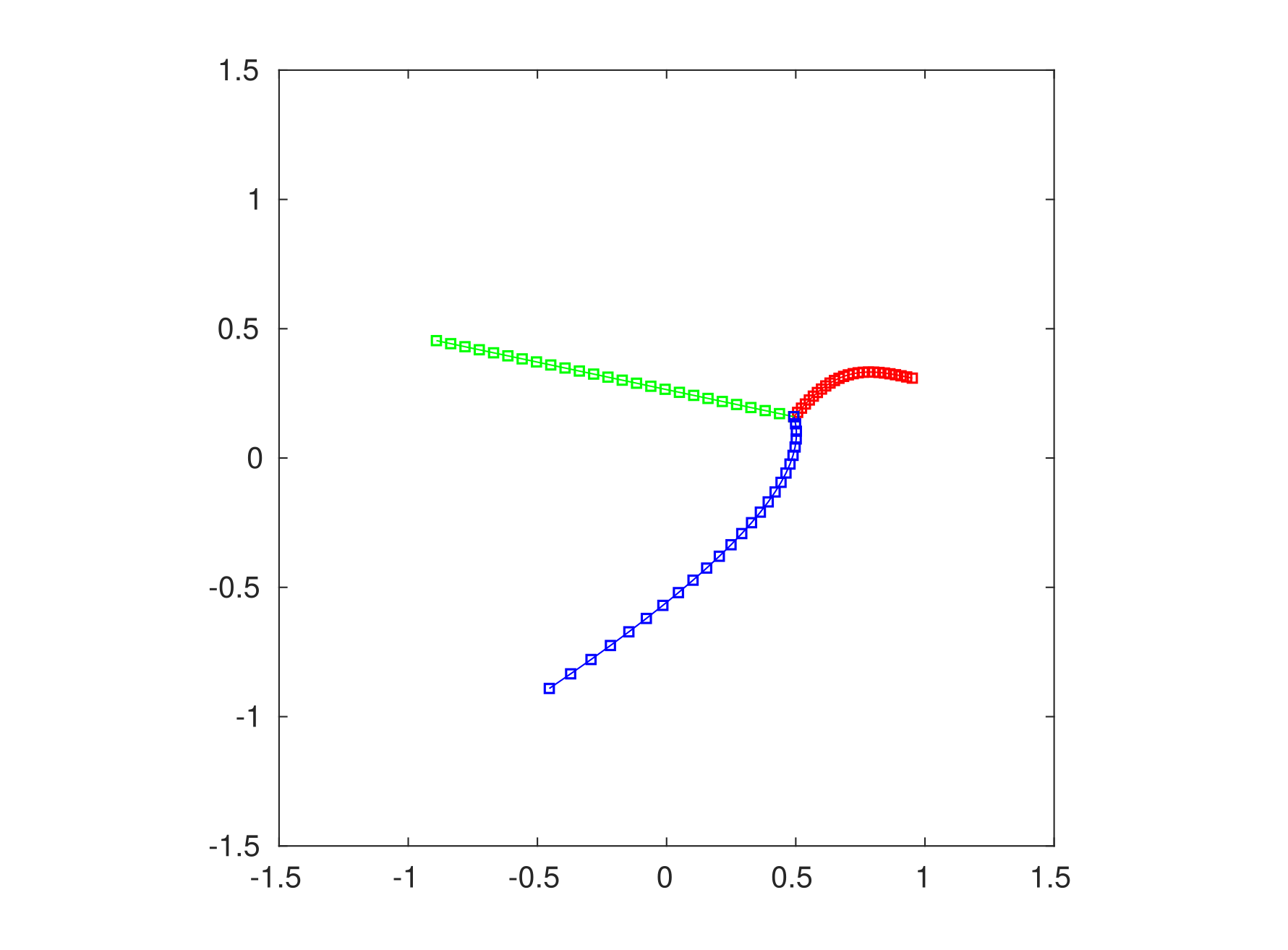} 
 \hfill 
 \includegraphics[width=5cm]{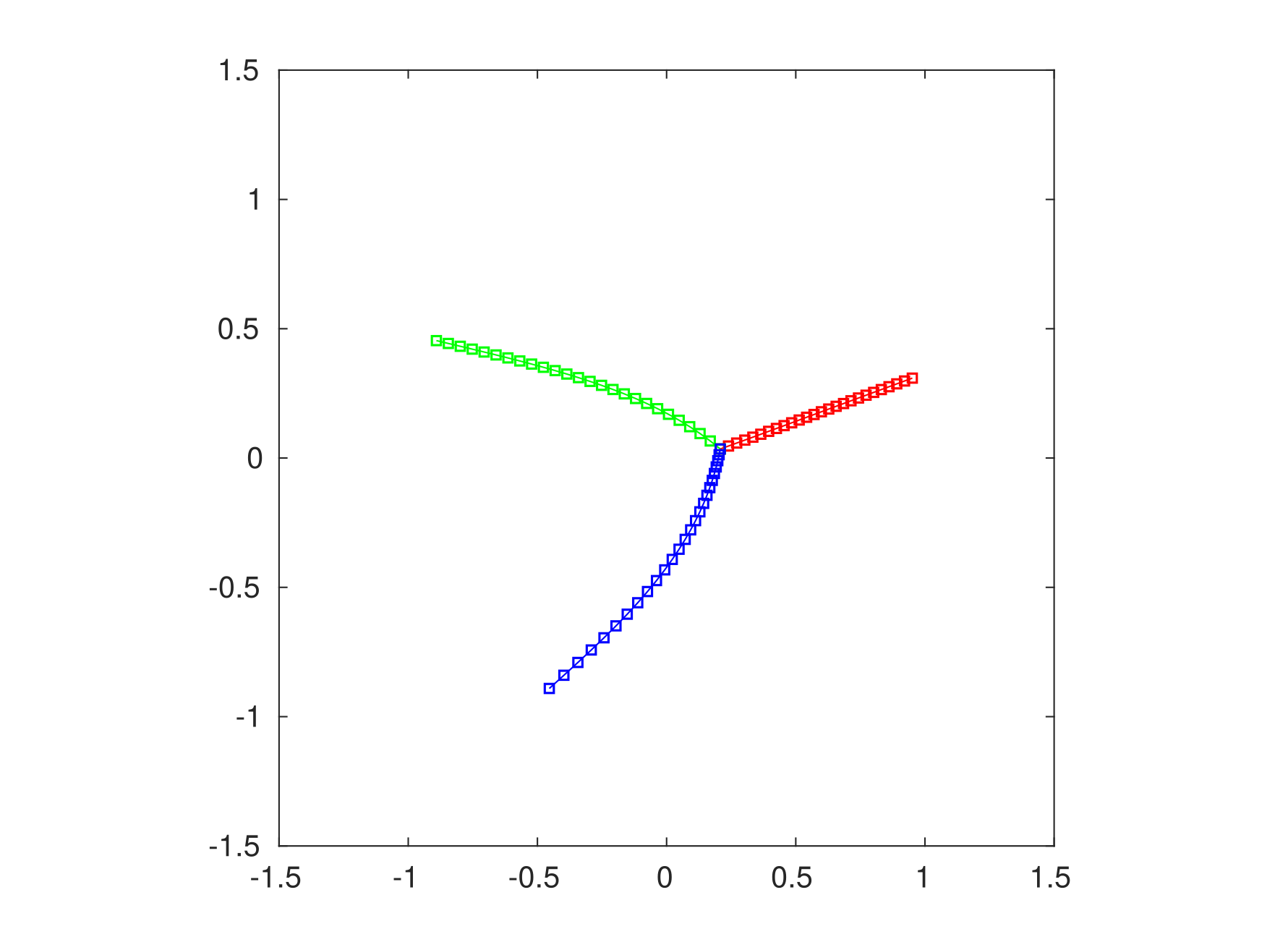} 
 \hfill 
 \includegraphics[width=5cm]{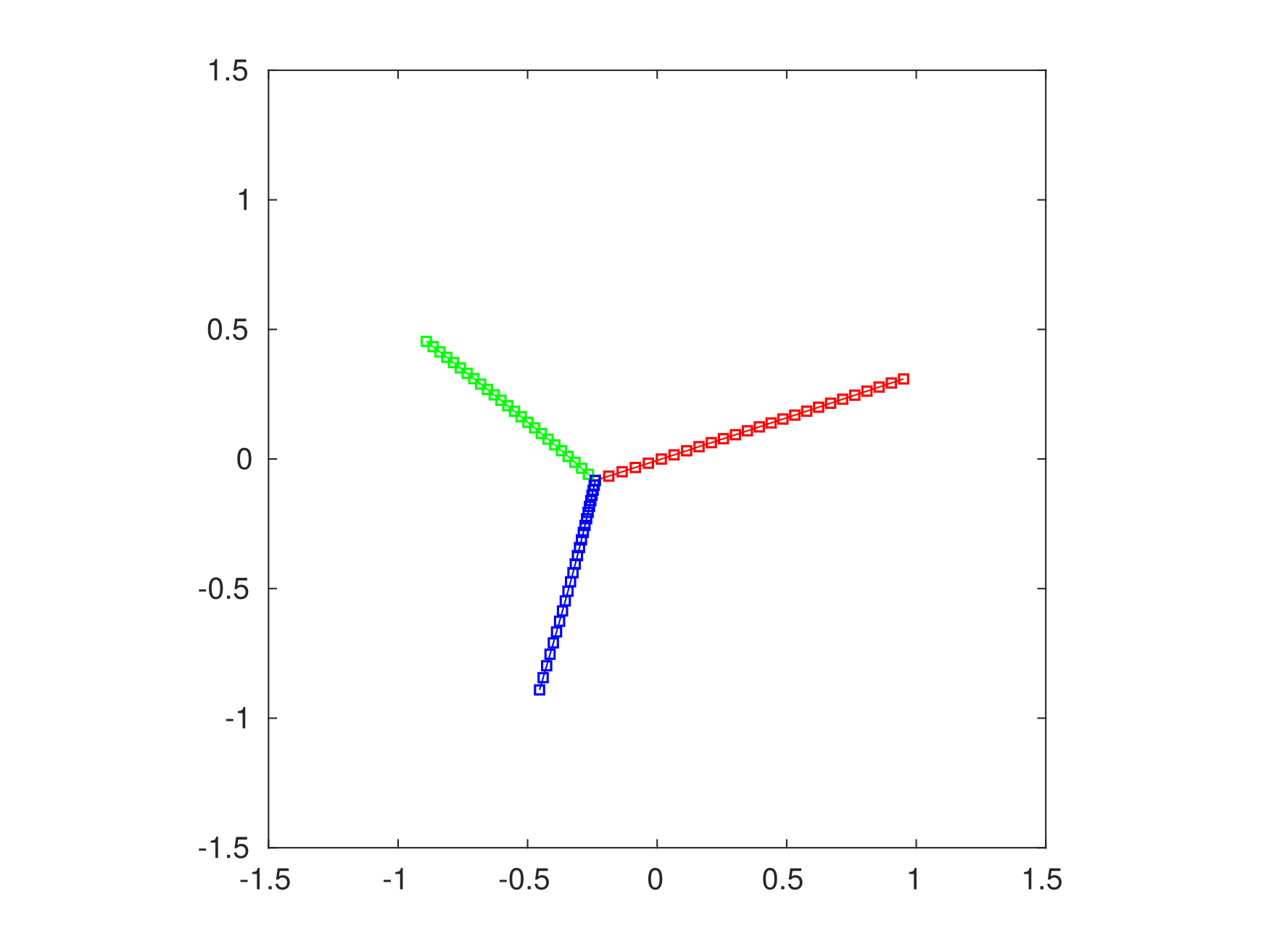} 
\end{center}
\caption{For the convergence test in Subsection \ref{subsec:conv}: Initial configuration (left), configuration at the final time $T=0.2$ for the error computations (middle), and result of a longer simulations at time $T=1.0$.}
\label{fig:conv_config}
\end{figure}

\begin{figure}
\begin{center}
 \includegraphics[width=10cm]{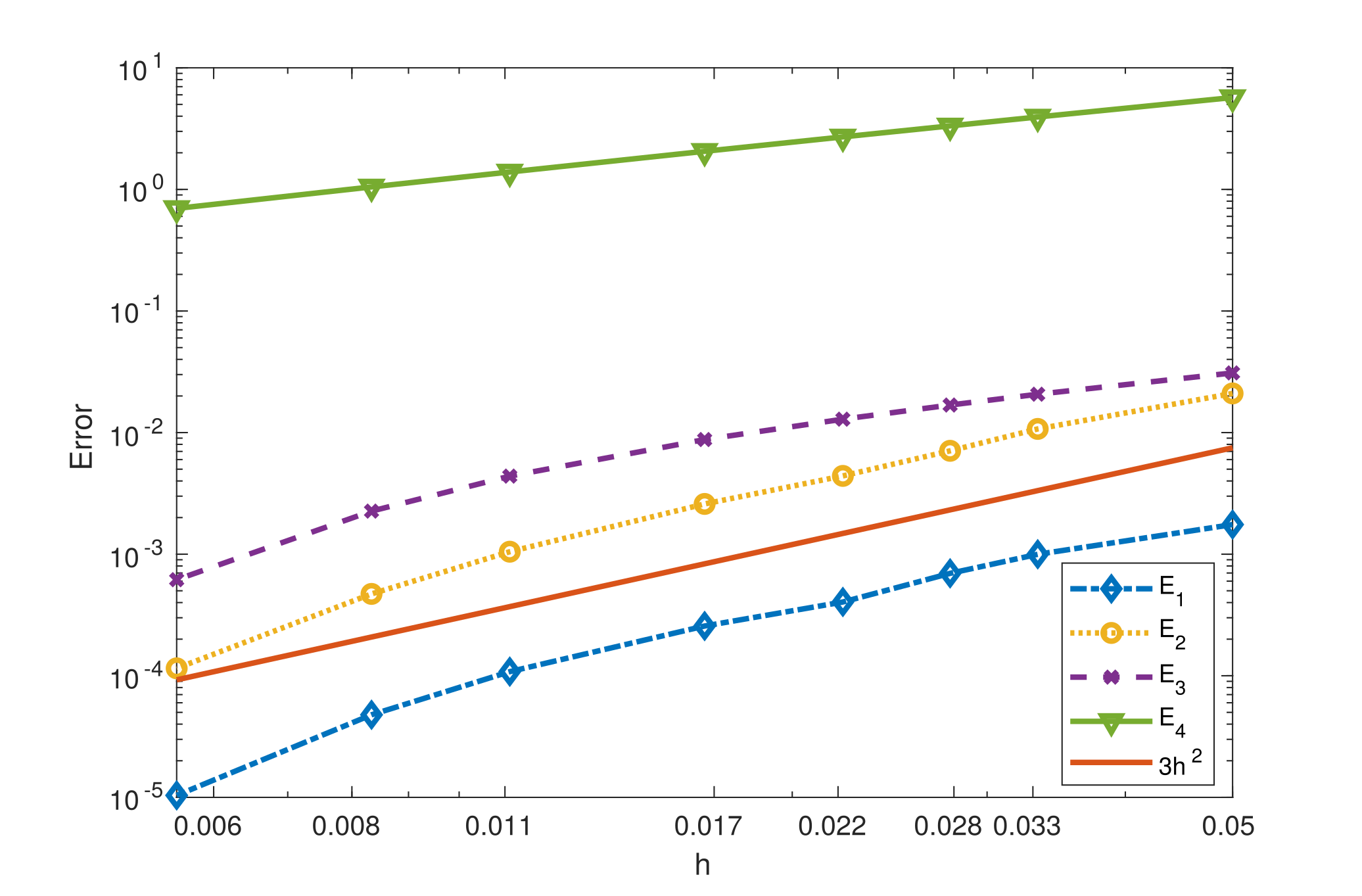} 
\end{center}
\caption{For the convergence test in Subsection \ref{subsec:conv}: $\log-\log$ graph of the errors \eqref{eq:err_defs1}, \eqref{eq:err_defs2} over the spatial step sice $h$ including the graph of $5 h^2$ for comparison.}
\label{fig:conv_err}
\end{figure}

\begin{table}
{\footnotesize
\begin{center}
\begin{tabular}{|r|r||l|l||l|l||l|l||l|l|} \hline 
$J$ & $N$ & $\mathcal{E}_1$ & $\eoc_1$ & $\mathcal{E}_2$ & $\eoc_2$ & $\mathcal{E}_3$ & $\eoc_3$ & $\mathcal{E}_4$ & $\eoc_4$\\ \hline \hline 
 20 & 400 & 0.0017525 & -1 & 0.020997 & -1 & 0.030963 & -1 & 5.6986 & -1 \\ \hline 
 30 & 900 & 0.000998 & 1.446 & 0.010719 & 1.726 & 0.020682 & 1.036 & 3.9424 & 0.946 \\ \hline 
 36 & 1296 & 0.0006957 & 2.039 & 0.0070811 & 2.343 & 0.016851 & 1.158 & 3.331 & 0.953 \\ \hline 
 45 & 2025 & 0.0004045 & 2.490 & 0.0044028 & 2.183 & 0.012891 & 1.231 & 2.7038 & 0.958 \\ \hline 
 60 & 3600 & 0.0002566 & 1.613 & 0.0025894 & 1.881 & 0.0087659 & 1.366 & 2.0585 & 0.966 \\ \hline 
 90 & 8100 & 0.0001081 & 2.160 & 0.0010478 & 2.262 & 0.0043857 & 1.731 & 1.3916 & 0.979 \\ \hline 
 120 & 14400 & 0.0000478 & 2.867 & 0.0004712 & 2.804 & 0.00226 & 2.327 & 1.0485 & 0.994 \\ \hline 
 180 & 32400 & 0.0000104 & 3.790 & 0.0001151 & 3.499 & 0.0006168 & 3.225 & 0.69774 & 1.011 \\ \hline 
\end{tabular}
\end{center}
}
\caption{For the test in Subsection \ref{subsec:conv} (with $\epsilon=10^{-3}$ and $\delta = 0.2 h^2$): Errors \eqref{eq:err_defs1}, \eqref{eq:err_defs2} and $\eocs$.}
\label{tab:conv}
\end{table}

We assess the convergence result of Theorem \ref{th_conv} with a specific example. Let $\tilde{z} := (\sqrt{3}-\sqrt{2})/2$ and 
\[
 \tilde{u}^{(1)}_{0}(x) := 
 \begin{pmatrix}
 \tilde{z} + x(1-\tilde{z}) \\
 (1-\tilde{z}) \tfrac{\sin(\pi x)}{2\pi}
 \end{pmatrix},
 \quad 
 \tilde{u}^{(2)}_{0}(x) := 
 \begin{pmatrix}
 \tilde{z} -\sqrt{3} \tfrac{x}{2} \\
 \sqrt{2} \tfrac{x}{2}
 \end{pmatrix},
 \quad 
 \tilde{u}^{(3)}_{0}(x) := 
 \begin{pmatrix}
 \tilde{z} - \sqrt{3} \tfrac{x^2}{2} \\
 -\sqrt{2} \tfrac{x}{2}
 \end{pmatrix}. 
\]
For the initial triod, these curves were rotated about the origin counter-clockwise by $18^\circ$ to avoid any effects due to alingment with the coordinate axes. Note that the curves meet forming $120^\circ$ angles, and that the end points are on the unit circle. Figure \ref{fig:conv_config} (left) gives an impression of the initial triod. 

We considered the evolution over the time interval $[0,T]$ with $T = 0.2$ and chose $\epsilon = 10^{-3}$. Figure \ref{fig:conv_config} displays a numerical solution at that final time in the middle. We remark that the final configuration is not in equilibrium but continues to evolve to a configuration displayed in Figure \ref{fig:conv_config} on the right, which resembles a Steiner configuration \cite{GP68} consisting of three straight segments.  

We are not aware of any analytical solution satisfying these data and thus numerically computed a reference solution for assessing the convergence. The reference solution is denoted by $\{ \Gamma_{ref,h}^{n_{ref}} \}_{n_{ref}=0}^{N_{ref}}$ where $\Gamma_{ref,h}^{n_{ref}} = (U_{ref}^{(1),n_{ref}},U_{ref}^{(2),n_{ref}},U_{ref}^{(3),n_{ref}})$, and we chose $J_{ref} = 360$ elements and $N_{ref} = 129600$ time steps with corresponding spatial and temporal step sizes denoted by $h_{ref}$ and $\delta_{ref}$, respectively. 

For a computation with discretisation parameters $J$ and $N$ the following errors were computed, where $\mathcal{E}_2(J,N)$ and $\mathcal{E}_3(J,N)$ serve as approximations to the errors in Theorem~\ref{th_conv}: 
\begin{equation} \label{eq:err_defs1} \begin{split} 
 \mathcal{E}_1(J,N) &:= \max_{0 \leq n \leq N} \max_{0 \leq j \leq J} \max_{1 \leq i \leq 3} | U_j^{(i),n} - U_{ref,j_{ref}(j)}^{(i),n_{ref}(n)} |^2, \\
 \mathcal{E}_2(J,N) &:= \max_{0 \leq n \leq N} \sum_{j_{ref}=0}^{J_{ref}-1} \sum_{i=1}^{3} h_{ref} \Big{|} \frac{U_{j(j_{ref})+1}^{(i),n} - U_{j(j_{ref})}^{(i),n}}{h} - \frac{U_{ref,j_{ref}+1}^{(i),n_{ref}(n)} - U_{ref,j_{ref}}^{(i),n_{ref}(n)}}{h_{ref}} \Big{|}^2, \\
 \mathcal{E}_3(J,N) &:= \sum_{n_{ref}=0}^{N_{ref}-1} \delta_{ref} \sum_{i=1}^{3} \int_{I} \Big{|} \frac{U^{(i),n(n_{ref})+1} - U^{(i),n(n_{ref})}}{\delta} - \frac{U_{ref}^{(i),n_{ref}+1} - U_{ref}^{(i),n_{ref}}}{\delta_{ref}} \Big{|}^2 dx. 
\end{split} \end{equation} 
Here, for $n \in \{ 0, \dots, N \}$ given, $n_{ref}(n) \in \{ 0, \dots, N_{ref} \}$ is the index such that $n_{ref} \delta_{ref} = n \delta$ yields the same point in time. Similarly for the spatial index map $j_{ref}(j)$. Inversely, for $n_{ref} \in \{ 0, \dots, N_{ref} \}$ given,  $n(n_{ref}) \in \{ 0, \dots, N \}$ is the index such that $n_{ref} \delta_{ref} \in [n \delta, (n+1) \delta)$, and similarly for the spatial index map $j(j_{ref})$. In the limit as $\epsilon \to 0$, the angles of the analytical solution approach $120^\circ$ (see the discussion in the next subsection around equation \eqref{eq:angle_cond} for more detail). We thus also computed the error of the angles formed at the triple junction: 
\begin{equation} \label{eq:err_defs2} 
 \mathcal{E}_4 := \max_{0 \leq n \leq N} \max_{1 \leq i \leq 3} \Big{|} \angle (\pd_{x} U^{(i {\rm mod } 3+1),n}(0), \pd_{x} U^{((i+1) {\rm mod } 3+1),n}(0)) - 120^\circ \Big{|}.
\end{equation}
Here, we recall that, given two calculations with discretisation parameters $(J_a,N_a)$ and $(J_b, N_b)$, {\it experimental order of convergence} ($\eocs$) for spatial convergence then were computed as 
\begin{equation} \label{eq:EOCs}
 \eoc_i = \frac{\log(\mathcal{E}_i(J_a,N_a)) - \log(\mathcal{E}_{i}(J_b,N_b))}{\log(J_b) - \log(J_a)},
\end{equation}
and analogously for convergence in time with $J$ replaced by $N$ in the denominator. 

In order to assess the convergence in the spatial step size we performed some simulations with differing values of $J$ whilst choosing the time step sizes $\delta = 0.2 h^2$. Table \ref{tab:conv} lists the errors and $\eocs$. Figure \ref{fig:conv_err} displays the errors over the step size $h$. 

The numbers clearly evidence convergence. In Theorem \ref{th_conv} we proved convergence rates of two for $\mathcal{E}_2$ and $\mathcal{E}_3$. This is also what we observe for $\mathcal{E}_{2}$. The results are a bit less conclusive for $\mathcal{E}_{3}$, but its $\eocs$ are well bigger than one, increasing, and finally beyond two. Let us remark that the last simulation with $J=180$ elements has just half the number of elements of the reference solution, which could explain the strong increase of the $\eocs$ for $\mathcal{E}_1$--$\mathcal{E}_3$. For the angles in the triple junction we observe linear convergence of $\mathcal{E}_4$. This seems optimal as (\ref{JPC}) is a condition on the first spatial derivatives and we are using piecewise linear approximations. 

\begin{table}
{\footnotesize
\begin{center}
\begin{tabular}{|r|r||l|l||l|l||l|l||l|l|} \hline 
$J$ & $N$ & $\mathcal{E}_1$ & $\eoc_1$ & $\mathcal{E}_2$ & $\eoc_2$ & $\mathcal{E}_3$ & $\eoc_3$ & $\mathcal{E}_4$ & $\eoc_4$\\ \hline \hline 
 60 & 3456 & 0.0001971 & -- & 0.0024645 & -- & 0.0068268 & -- & 2.0593 & -- \\ \hline 
 60 & 4320 & 0.0001506 & 1.206 & 0.0018466 & 1.294 & 0.005426 & 1.029 & 2.0547 & 0.0101 \\ \hline 
 60 & 5760 & 0.0001014 & 1.375 & 0.0012261 & 1.423 & 0.0038776 & 1.168 & 2.0491 & 0.0096 \\ \hline 
 60 & 6912 & 7.6337e-05 & 1.557 & 0.00092 & 1.576 & 0.0030415 & 1.332 & 2.0456 & 0.0092 \\ \hline 
 60 & 8640 & 5.175e-05 & 1.742 & 0.0006229 & 1.747 & 0.0021686 & 1.516 & 2.0416 & 0.0088 \\ \hline 
 60 & 11520 & 2.8703e-05 & 2.049 & 0.0003460 & 2.044 & 0.0012815 & 1.829 & 2.0368 & 0.0083 \\ \hline 
 60 & 17280 & 9.4174e-06 & 2.749 & 0.0001146 & 2.725 & 0.0004590 & 2.532 & 2.0305 & 0.0076 \\ \hline 
\end{tabular}
\end{center}
}
\caption{For the test in Subsection \ref{subsec:conv} (with $\epsilon=10^{-3}$): Errors \eqref{eq:err_defs1}, \eqref{eq:err_defs2} and $\eocs$ but for $J$ fixed and $N$ changing.}
\label{tab:conv_time}
\end{table}

For completeness, we have also briefly checked the time discretisation error. Fixing $J=60$ we computed a reference solution with $N_{ref} = 34560$ and then compared it with the solutions for several smaller values $N$. Table \ref{tab:conv_time} confirms convergence of $\mathcal{E}_1$--$\mathcal{E}_3$ with $\eocs$ closing in on two (as the errors are squares of norms $\eocs$ of two correspond to linear convergence, which is the expected rate of the first order time stepping scheme). In the last row the rates are well beyond two but this could be due to approaching the resolution of the reference solution, noting that $N=17280$ is half the number of time steps of the reference solution. The angles will only converge if the spatial resolution is improved, and this is visible in terms of stagnating values of $\mathcal{E}_4$.

\subsection{Impact of the regularisation parameter}
\label{subsec:eps}

\begin{figure}
\begin{center}
 \includegraphics[width=5cm]{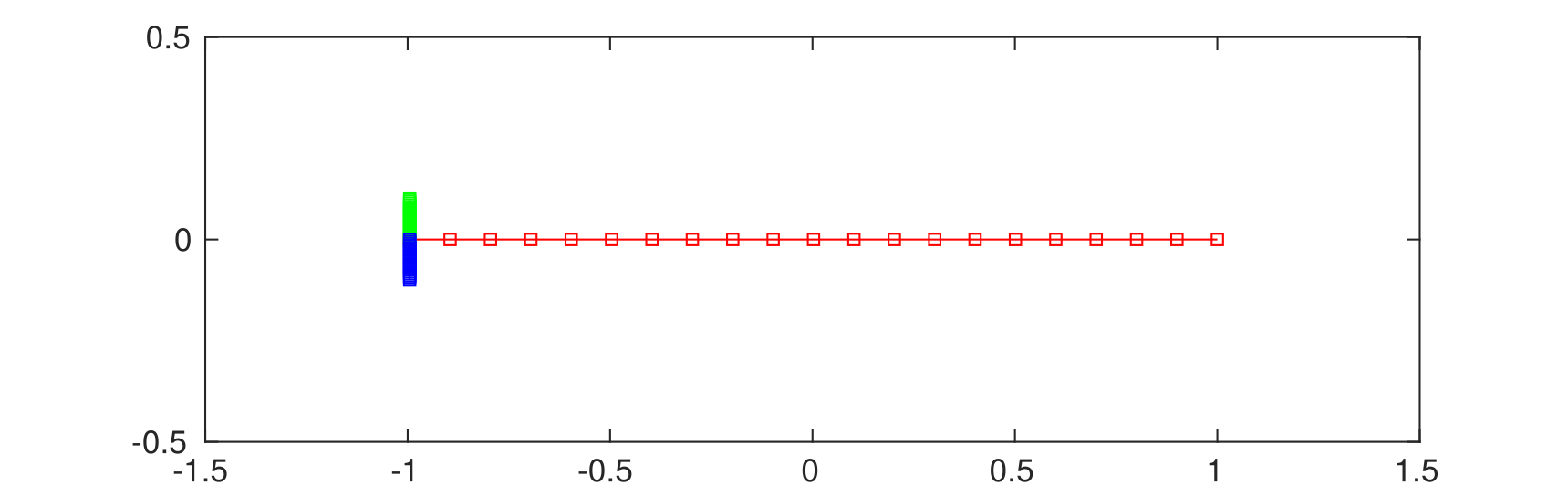} 
 \hfill 
 \includegraphics[width=5cm]{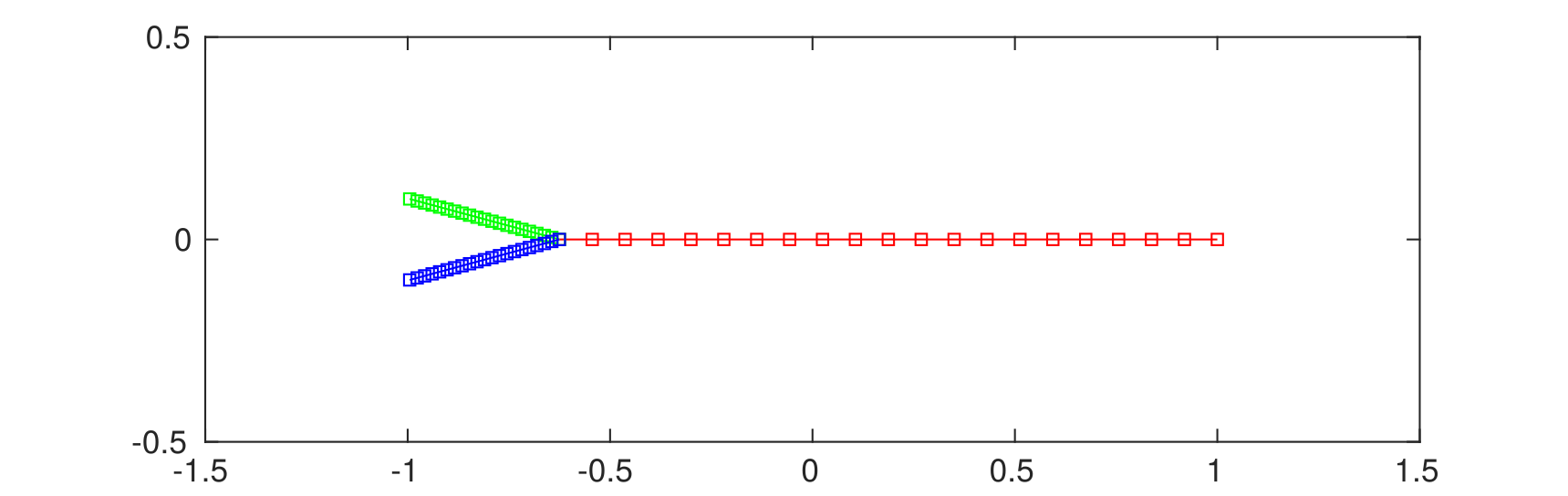} 
 \hfill 
 \includegraphics[width=5cm]{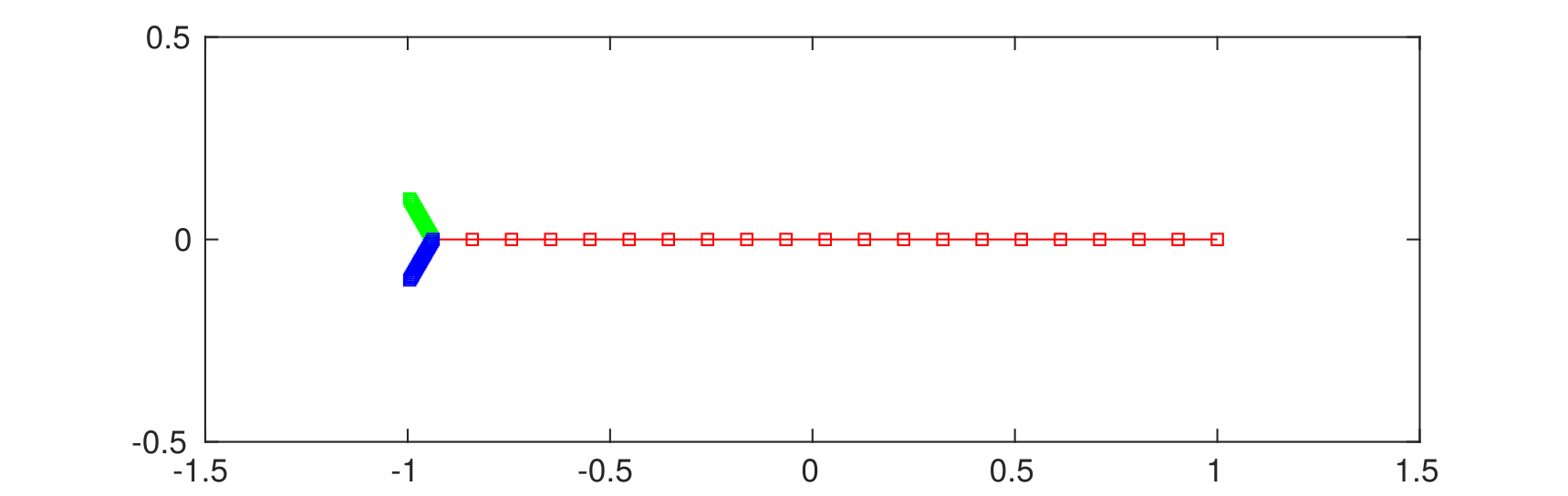} 
\end{center}
\caption{For the test in Subsection \ref{subsec:eps}: For $J=20$, initial configuration (left), and relaxed configurations for $\epsilon = 1$ (centre) and $\epsilon = 10^{-5}$ (right). The curves given by $u^{(1)}$, $u^{(2)}$, and $u^{(3)}$ are red, green, and blue, respectively. The time step size was set to $\delta = 0.01$, and the computation was finished when the stopping criterion \eqref{eq:eps_stopping} was satisfied.}
\label{fig:impact_eps}
\end{figure}

\begin{table}
{\footnotesize
\begin{center}
\begin{tabular}{|r|r|l||l|l||l|l|} \hline 
$J$ & $N_{tot}$ & $\epsilon$ & $\mathcal{E}_{ang}$ & $\eoc_{ang}$ & $\mathcal{E}_{pos}$ & $\eoc_{pos}$ \\ \hline \hline 
 20 & 669 & 1 & 89.719 & -- & 0.31184 & -- \\ \hline 
 20 & 552 & 0.1 & 12.759 & 0.8471 & 0.015937 & 1.2915 \\ \hline 
 20 & 3769 & 0.01 & 1.2665 & 1.0032 & 0.0014832 & 1.0312 \\ \hline 
 20 & 18912 & 0.001 & 0.12656 & 1.0003 & 0.00014736 & 1.0028 \\ \hline 
 20 & 8864 & 0.0001 & 0.012655 & 1.0000 & 1.4726e-05 & 1.0003 \\ \hline 
 20 & 21 & 1e-05 & 0.001264 & 1.0006 & 1.4684e-06 & 1.0012 \\ \hline 
\end{tabular}
\end{center}
}
\caption{For the test in Subsection \ref{subsec:eps}: We display $\mathcal{E}_{ang}$ and $\mathcal{E}_{pos}$ defined in \eqref{eq:eps_err_defs} and corresponding $\eocs$ when varying $\epsilon$ but with $J$ and $\delta$ fixed. The number $N_{tot}$ is the (final) time step when the stopping criterion \eqref{eq:eps_stopping} was satisfied.}
\label{tab:impact_eps}
\end{table}

Recall from \eqref{JPC} the condition
\begin{equation} \label{JPC2}
 0 = \sum_{i=1}^{3} \frac{u^{(i)}_{x}(t,0)}{|u^{(i)}_{x}(t,0)|} + \epsilon u^{(i)}_{x}(t,0) = \sum_{i=1}^{3} \big{(} 1 + \epsilon |u^{(i)}_{x}(t,0)| \big{)} \tau^{(i)}(t,0) =: \sum_{i=1}^{3} \tilde{\sigma}^{(i)} \tau^{(i)}(t,0)
\end{equation}
in the triple junction. Let us denote the angle opposite of the curve defined by $u^{(i)}$ with $\theta^{(i)}$ (see Figure \ref{fig:triod}). Equation \eqref{JPC2} implies that (for instance, see \cite{GarNesSto_SIAP_1999})
\begin{equation} \label{eq:angle_cond}
 \frac{\sin(\theta^{(1)})}{\tilde{\sigma}^{(1)}} = \frac{\sin(\theta^{(2)})}{\tilde{\sigma}^{(2)}} = \frac{\sin(\theta^{(3)})}{\tilde{\sigma}^{(3)}}. 
\end{equation}
In applications, the $\tilde{\sigma}^{(i)}$ can be interpreted as surface tension coefficients, and the higher $\tilde{\sigma}^{(i)}$ the stronger the corresponding curve pulls at the triple junction. If $\epsilon = 0$ then all the $\tilde{\sigma}^{(i)}$ are the same, and this implies $120$ degree angles. But if the length elements $|u^{(i)}_{x}(t,0)|$ differ and $\epsilon$ is positive then we expect to see deviations from these angles. 

We assessed the impact of $\epsilon$ by relaxing the initial curves
\[
 u^{(1)}_{0}(x) := 
 \begin{pmatrix}
 -\tilde{z} + x (1 + \tilde{z}) \\
 0
 \end{pmatrix},
 \quad 
 u^{(2)}_{0}(x) := 
 \begin{pmatrix}
 -\tilde{z} \\
 x z
 \end{pmatrix},
 \quad 
 u^{(3)}_{0}(x) := 
 \begin{pmatrix}
 -\tilde{z} \\
 -x z
 \end{pmatrix}
\]
for $z = 0.1$ and $\tilde{z} = \sqrt{1 - z^2}$ to an equilibrium triod for several values of $\epsilon$. We then compared the angles between the elements forming the triple junction with the $120$ degrees that we would get for $\epsilon = 0$. Note that the initial triod is an inconsistent initial condition in that it does not satisfy the angle condition, but we observed that approximately correct angles emerge very quickly. An equilibrium configuration consists of three straight segments connecting a triple junction on the first coordinate axis to the three (fixed) end points of the initial curve. For $\epsilon = 0$ the position of this final triple junction can be explicitly computed to be $p(0) := (-\tilde{z} + z / \sqrt{3}, 0),$ and we also investigate the impact of $\epsilon$ on the position of the triple junction. 

We performed computations for $J=20$ ($h=0.05$) with a time step size of $\delta = 0.01$. The computations were terminated at the first time step, denoted by $N_{tot}$, such that 
\begin{equation} \label{eq:eps_stopping}
 \max_{1 \leq i \leq 3} \max_{1 \leq j \leq J} \big{|} (U^{(i),N_{tot}}_{j} - U^{(i),N_{tot}-1}_{j}) / \delta \big{|} < 10^{-6}
\end{equation}
was satisfied. Figure \ref{fig:impact_eps} (left) displays the initial configuration and the relaxed configurations for $\epsilon = 1$ (centre) and $\epsilon = 10^{-5}$ (right). The vertices look well equi-distributed for each curve. We also observe that the first curve is much longer than the other two, whence $|u^{(1)}_{hx}| > \max \{ |u^{(2)}_{hx}|, |u^{(3)}_{hx}| \}$. Consequently, $\tilde{\sigma}^{(1)} > \max \{ \tilde{\sigma}^{(2)}, \tilde{\sigma}^{(3)} \}$, and this difference becomes the more pronounced the larger $\epsilon$. For $\epsilon = 1$, Figure \ref{fig:impact_eps}, centre, indeed reveals that the triple junction is positioned significantly further to the right of the position for the limiting problem, i.e., towards the other end point of the curve given by $u^{(1)}_{h}$. 

As mentioned above, we computed the errors defined by
\begin{equation} \label{eq:eps_err_defs}
 \mathcal{E}_{ang}(\epsilon) := \max_{1 \leq i \leq 3} | \theta^{(i)}_{h}(\epsilon) - 120 |, \quad \mathcal{E}_{pos}(\epsilon) := | p_{h}(\epsilon) - p(0) |,
\end{equation}
where $p_{h}(\epsilon) = U^{(1),N_{tot}}(0,\epsilon) = U^{(2),N_{tot}}(0,\epsilon) = U^{(3),N_{tot}}(0,\epsilon)$ is the computed triple junction position with associated angles
\[
 \theta^{(i)}_{h}(\epsilon) = \angle \big{(} \pd_{x} U^{(i {\rm mod } 3+1),N_{tot}}(0,\epsilon), \pd_{x} U^{((i+1) {\rm mod } 3+1),N_{tot}}(0,\epsilon) \big{)}.
\]
The notation is analogous to the continuous case illustrated in Figure \ref{fig:triod}. 
The $\eocs$ were computed analogously to \eqref{eq:EOCs} with $J$ replaced by $1/\epsilon$. 

Table \ref{tab:impact_eps} displays the results. We notice that both errors with respect to the angles and the position converge linearly in $\epsilon$. Further computations (not reported on in detail) showed that the convergence rates don't change significantly when varying the step sizes $h$ and $\delta$. The values for $N_{tot}$ first increase and then decrease again. To some extent this is explained by the fact that the higher $\epsilon$ the further the triple junction moves to the right along the first coordinate axis, see Figure \ref{fig:impact_eps}.

\subsection{Further examples}
\label{subsec:examples}

\begin{figure}
\begin{center}
 \includegraphics[width=5cm]{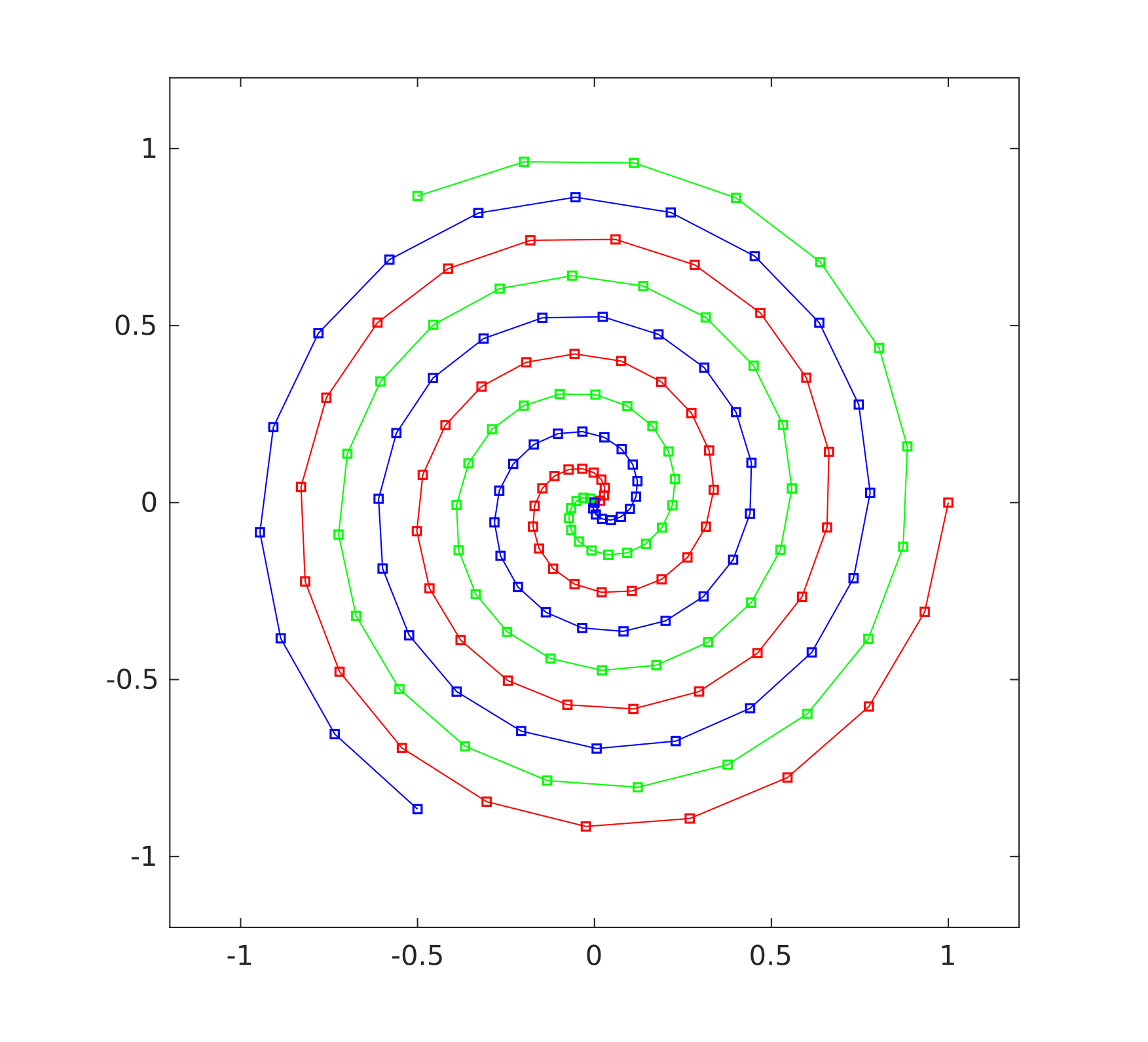} 
 \hfill 
 \includegraphics[width=5cm]{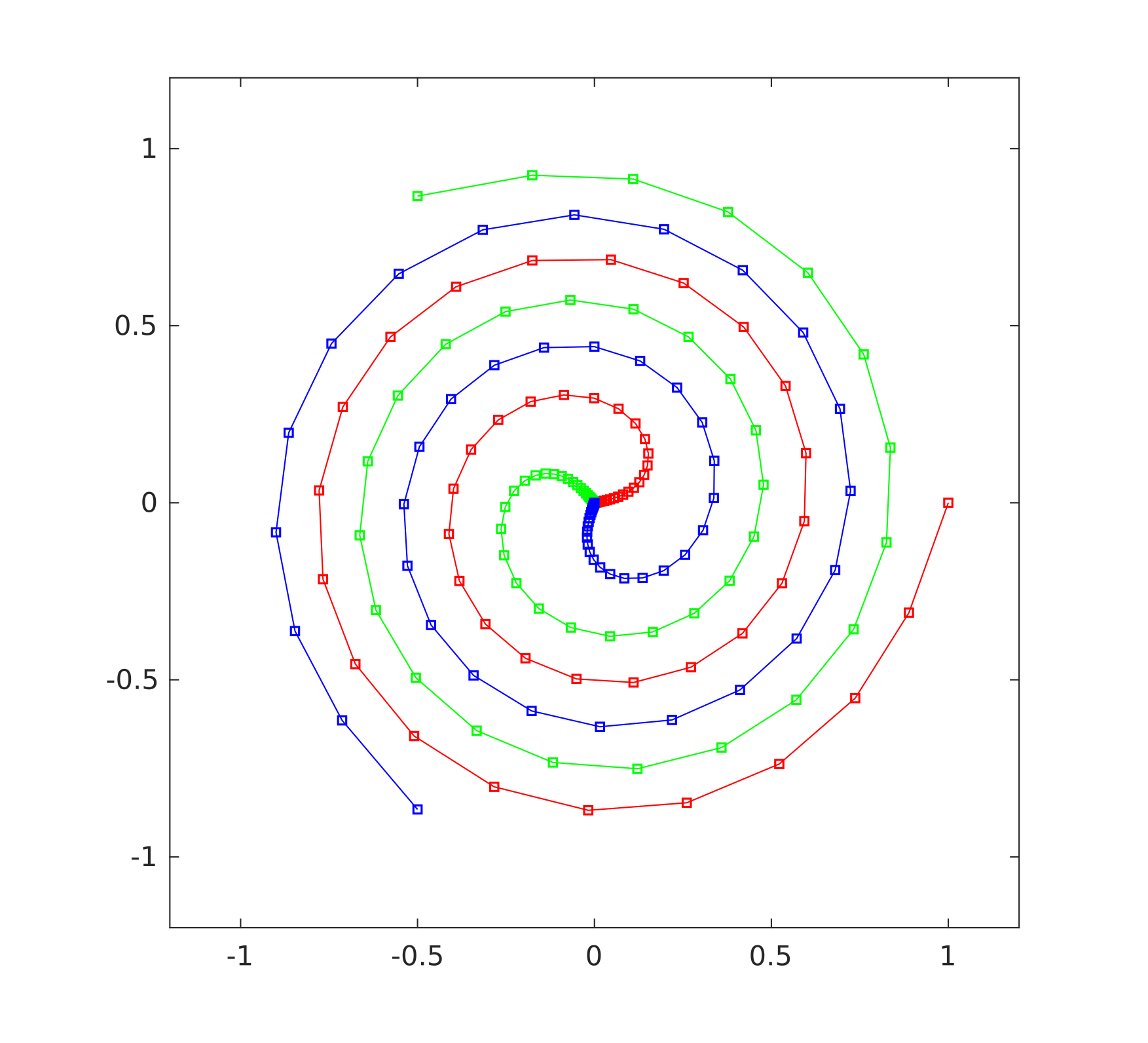} 
 \hfill 
 \includegraphics[width=5cm]{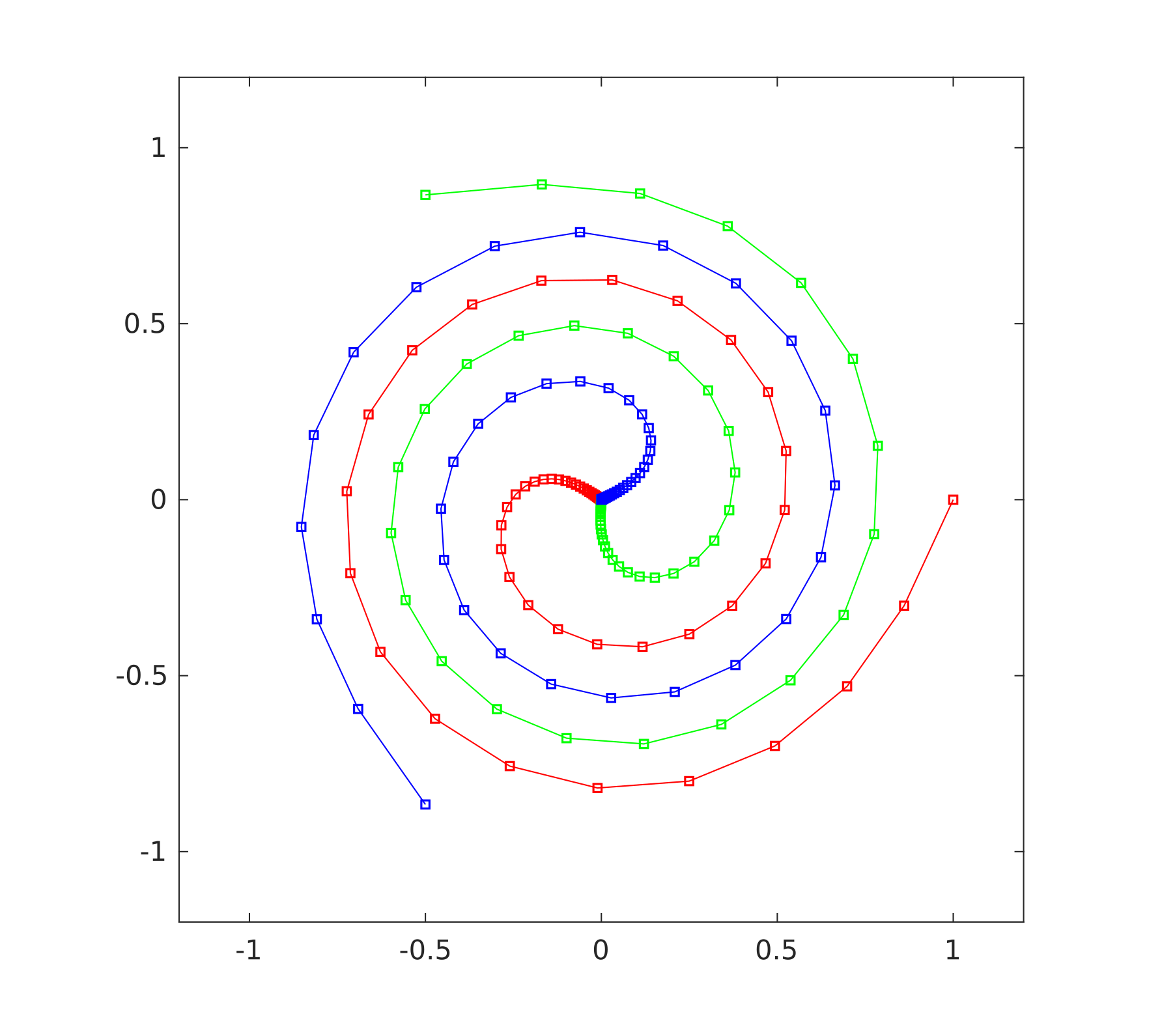} \\
 \includegraphics[width=5cm]{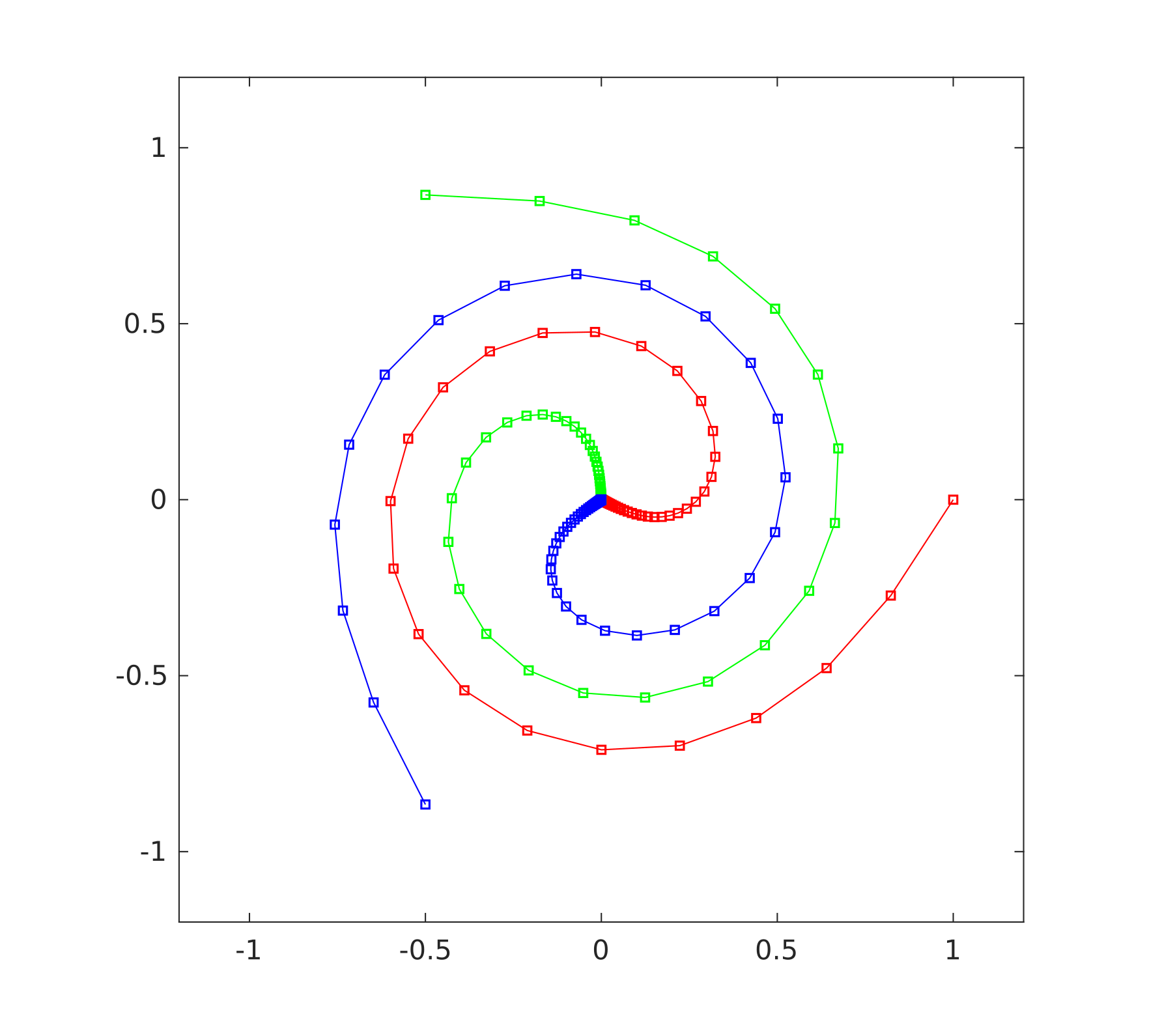} 
 \hfill 
 \includegraphics[width=5cm]{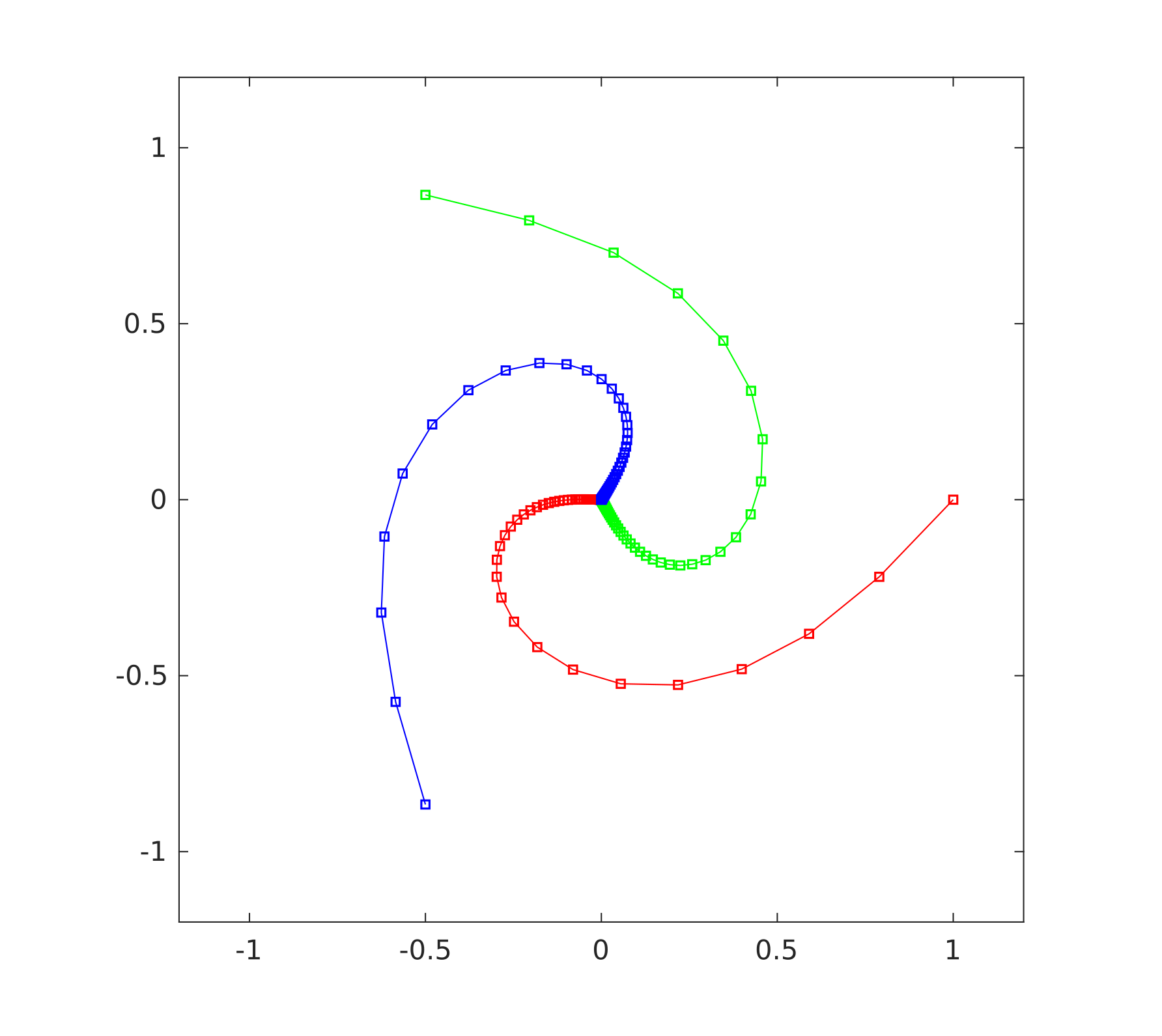} 
 \hfill 
 \includegraphics[width=5cm]{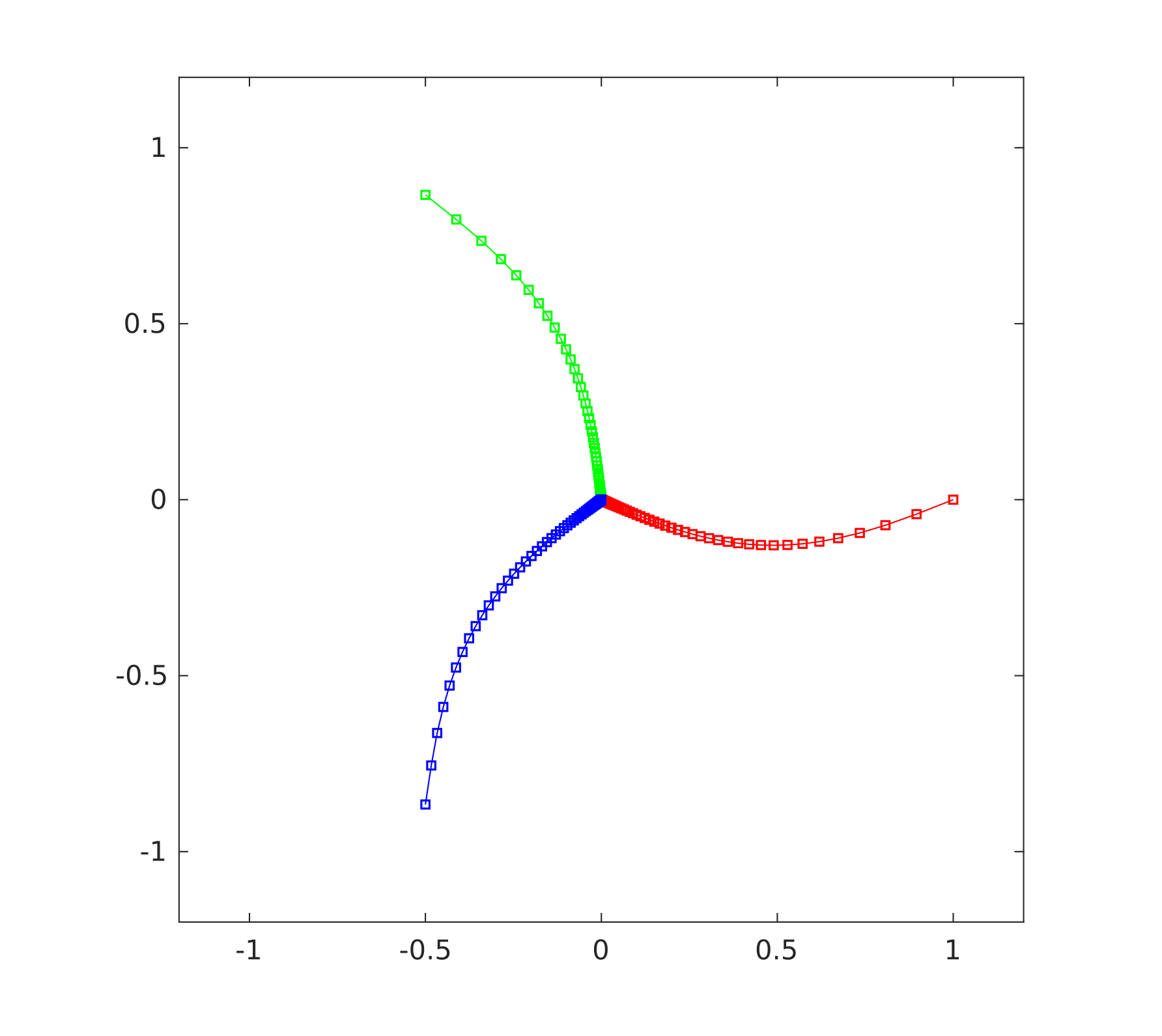} 
\end{center}
\caption{Numerical solution for the initial data given by \eqref{eq:ex_spiral_init} at times $t=0.0, 0.04, 0.08$ (top row, left to right), and $t=0.16, 0.28, 0.48$ (bottom row, left to right). The discretisation parameters were $J=60$ and $\delta = 0.0002$. See Subsection \ref{subsec:examples} for further details.} 
\label{fig:ex_spirals_sol}
\end{figure}

\begin{figure}
\begin{center}
 \includegraphics[width=10cm]{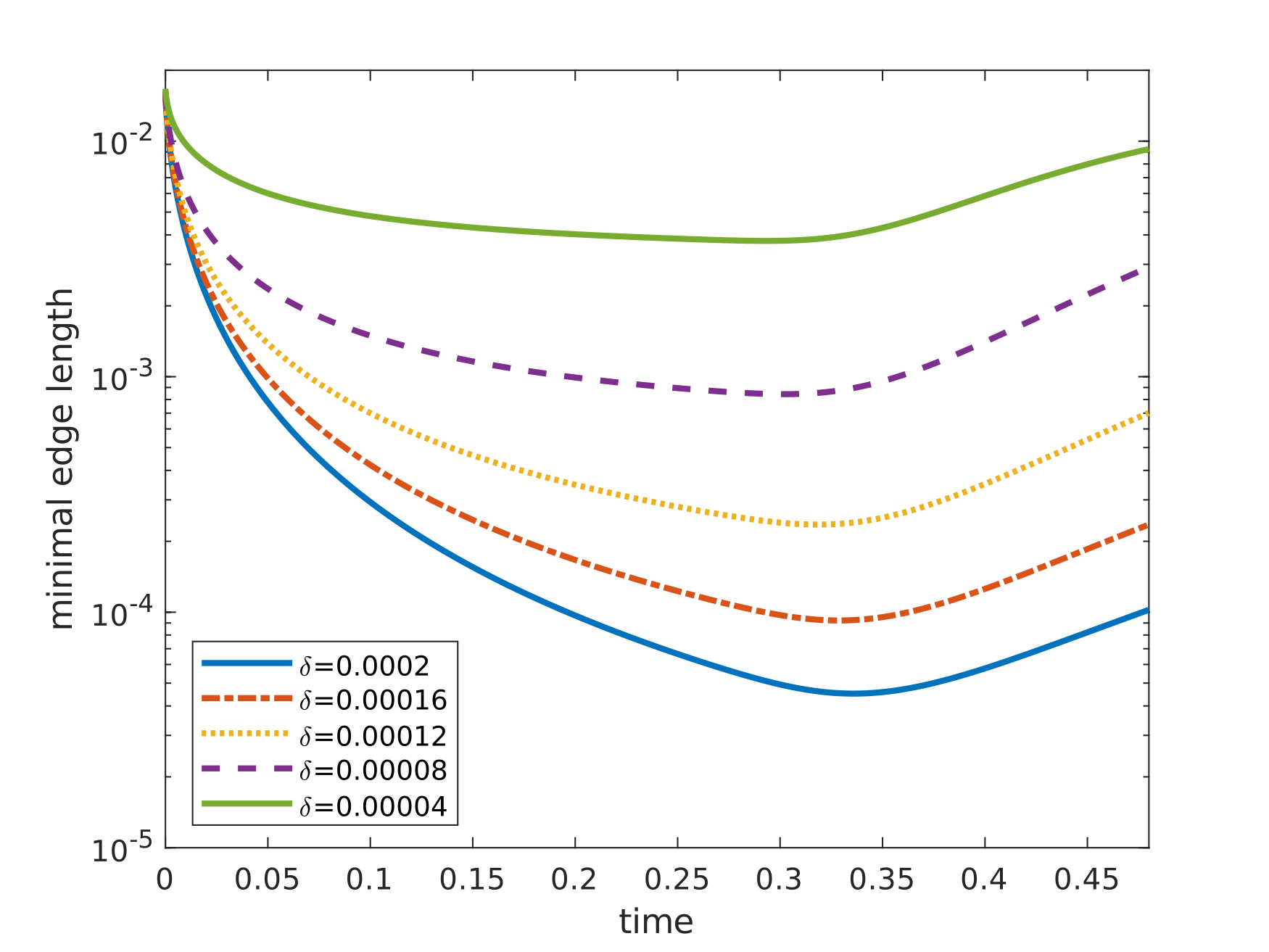} 
\end{center}
\caption{For an example described in Subsection \ref{subsec:examples} with simulations as in Figure \ref{fig:ex_spirals_sol}: Evolution of the minimal segment length for different time step sizes.}
\label{fig:ex_spirals_qvals}
\end{figure}

To assess the capability of the scheme to tangentially redistribute mesh points in the case of strong curvature and, thus, normal velocity we chose some spirals as initial curves, namely 
\begin{equation} \label{eq:ex_spiral_init}
 u^{(i)}_{0}(x) = x \begin{pmatrix} \cos(6 \pi x + \gamma^{(i)}) \\ \sin(6 \pi x + \gamma^{(i)}) \end{pmatrix}, \quad i=1,2,3, 
\end{equation}
with $\gamma^{(0)} = 0$, $\gamma^{(1)} = 2 \pi / 3$, and $\gamma^{(2)} = 4 \pi / 3$. We chose $\epsilon = 10^{-3}$ and set $J=60$. Simulations were run until time $T = 0.48$. 

Figure~\ref{fig:ex_spirals_sol} displays the initial configuration and gives an impression of the numerical solution for the time step size $\delta = 0.0002$. Accumulation of vertices is visible and, usually, becomes worse with increasing time step size. The segments forming the triple junction turned out to be the shortest, and the evolution of their minimum is shown in Figure~\ref{fig:ex_spirals_qvals} for varying time step sizes. We first see a drop, which is the more significant the larger the time step size. But when the triple junction gets closer to equilibrium and the normal velocity becomes smaller then the segment lengths pick up again. A better distribution of mesh points indeed then can be observed. 

\begin{figure}
\begin{center}
 \includegraphics[width=5cm]{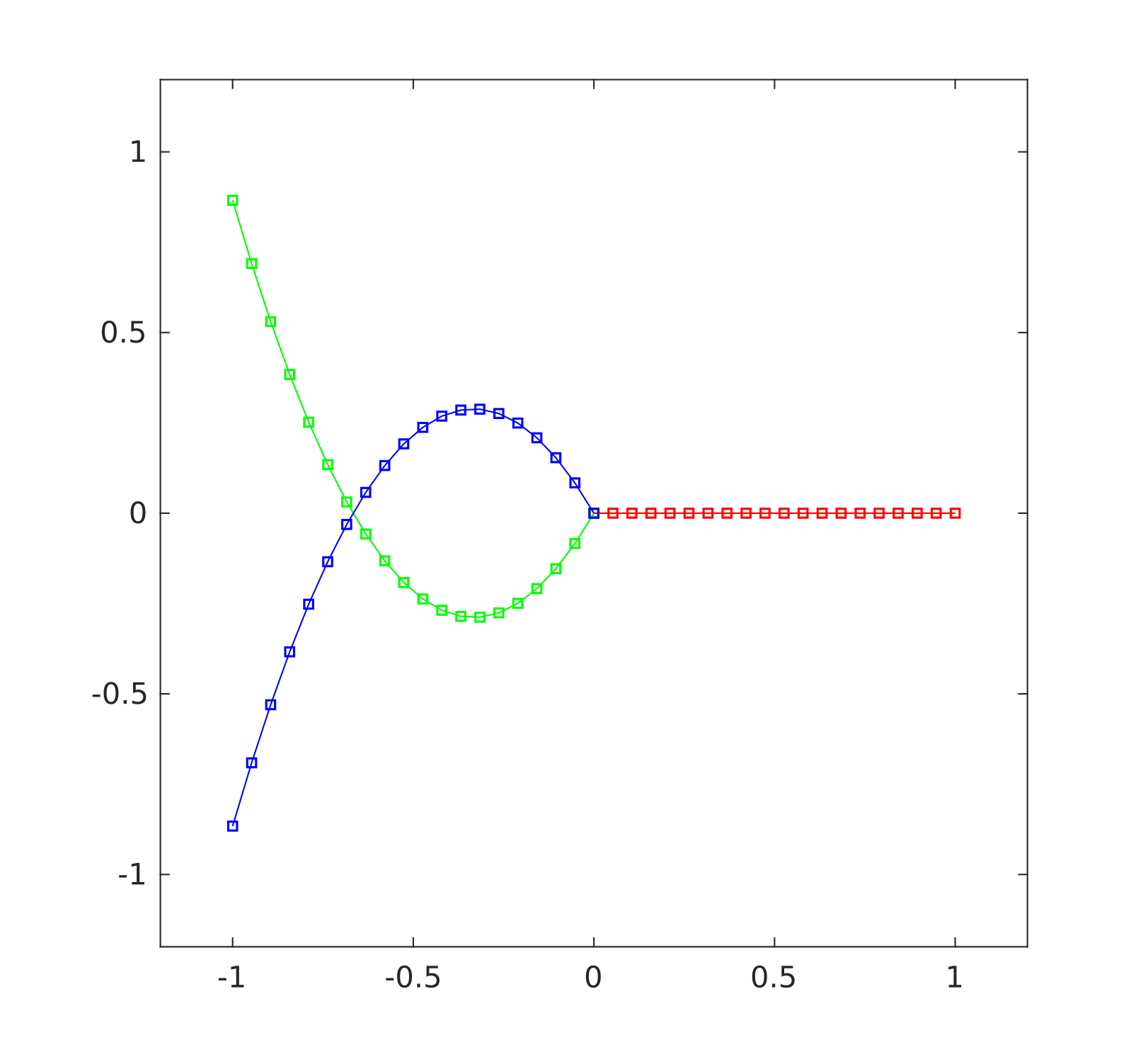} 
 \hfill 
 \includegraphics[width=5cm]{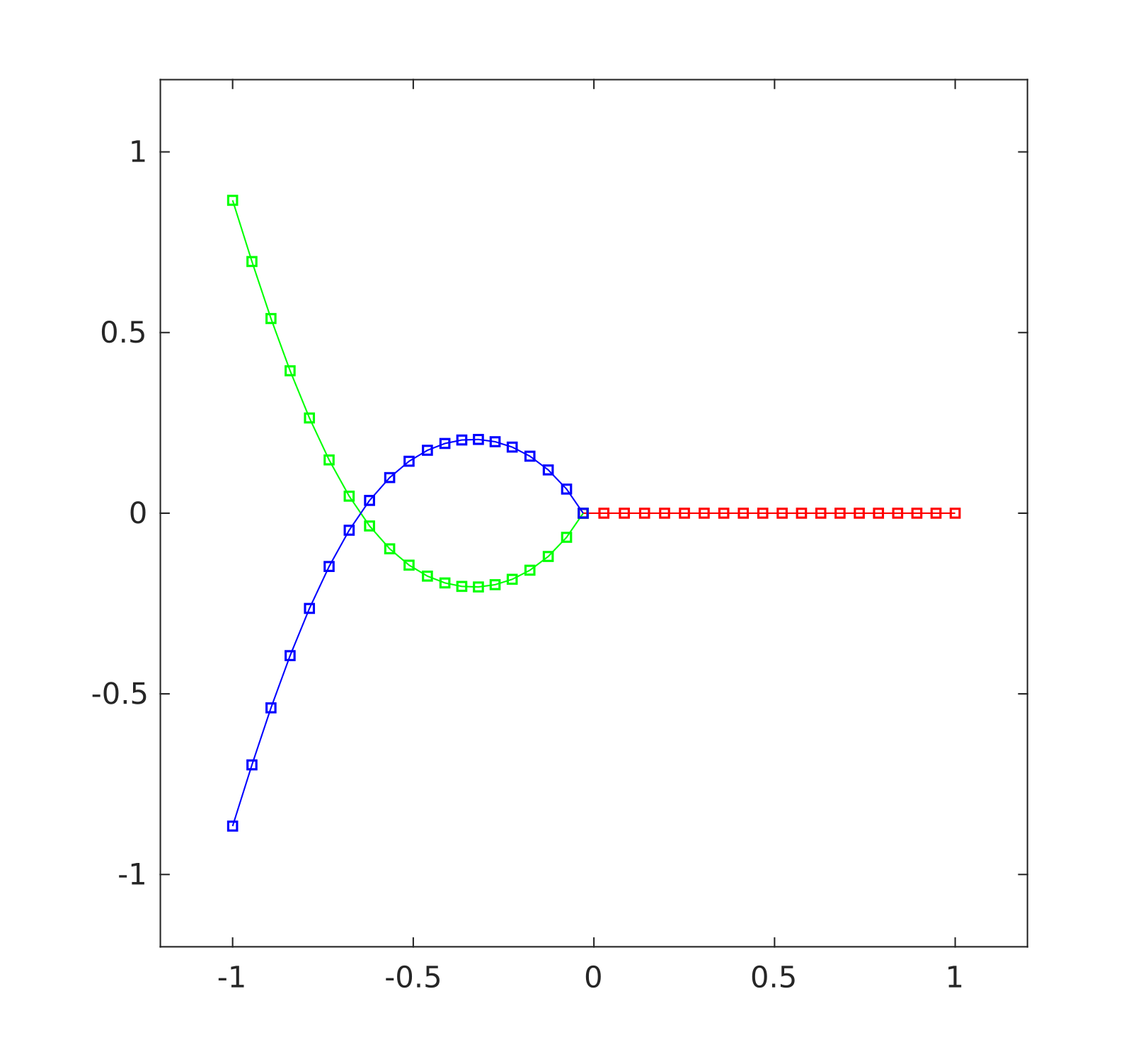} 
 \hfill 
 \includegraphics[width=5cm]{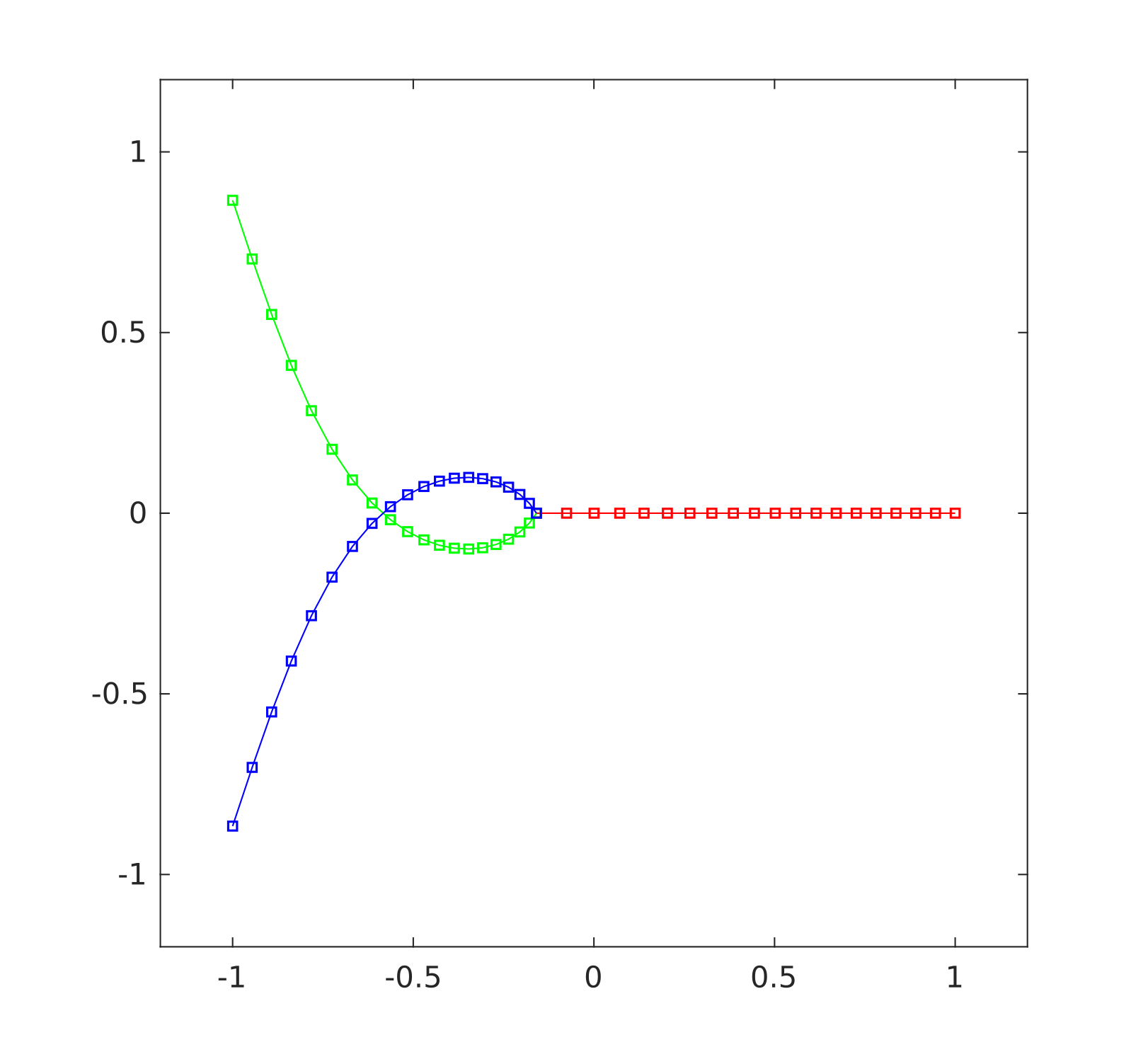} \\
 \includegraphics[width=5cm]{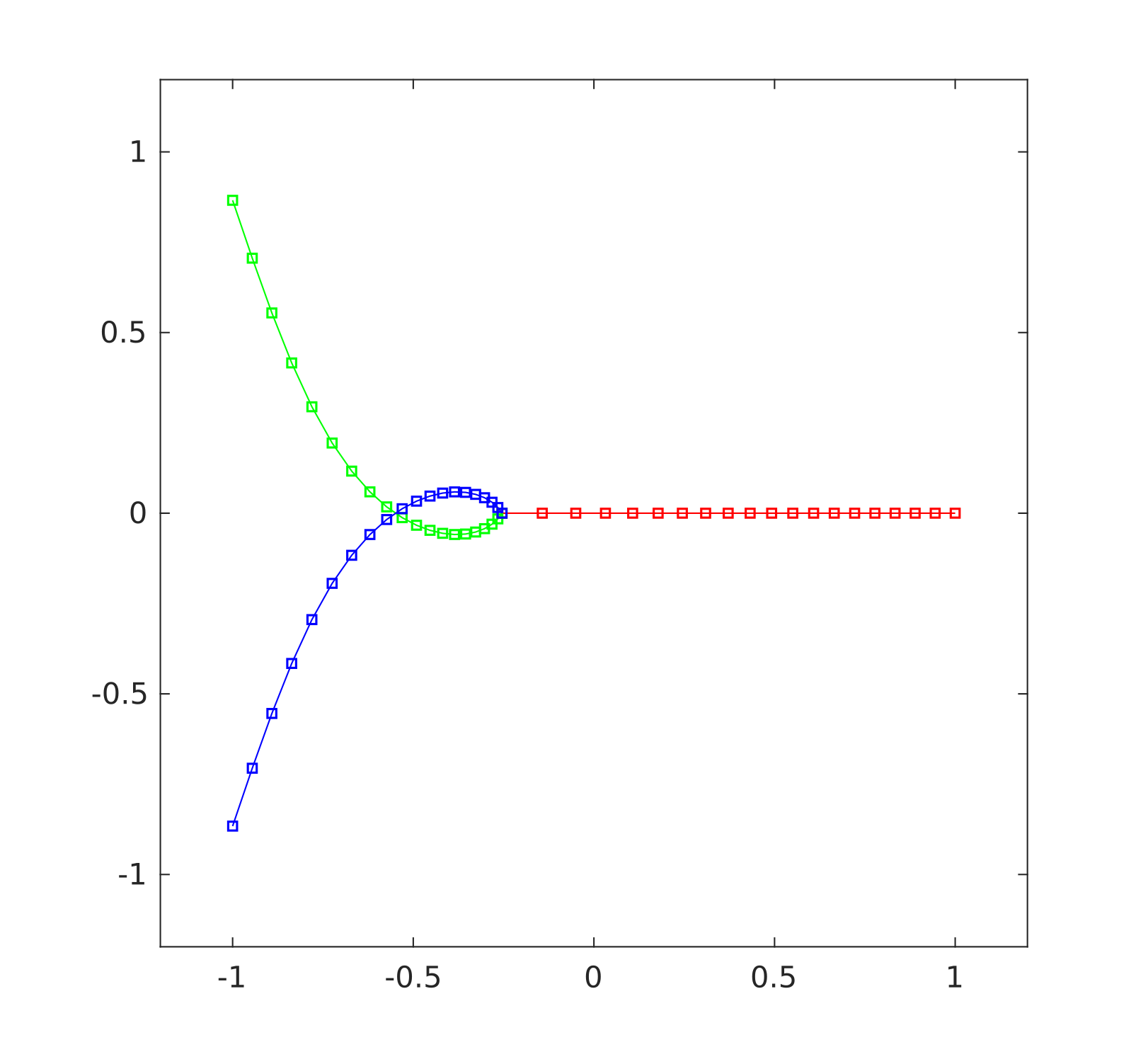} 
 \hfill 
 \includegraphics[width=5cm]{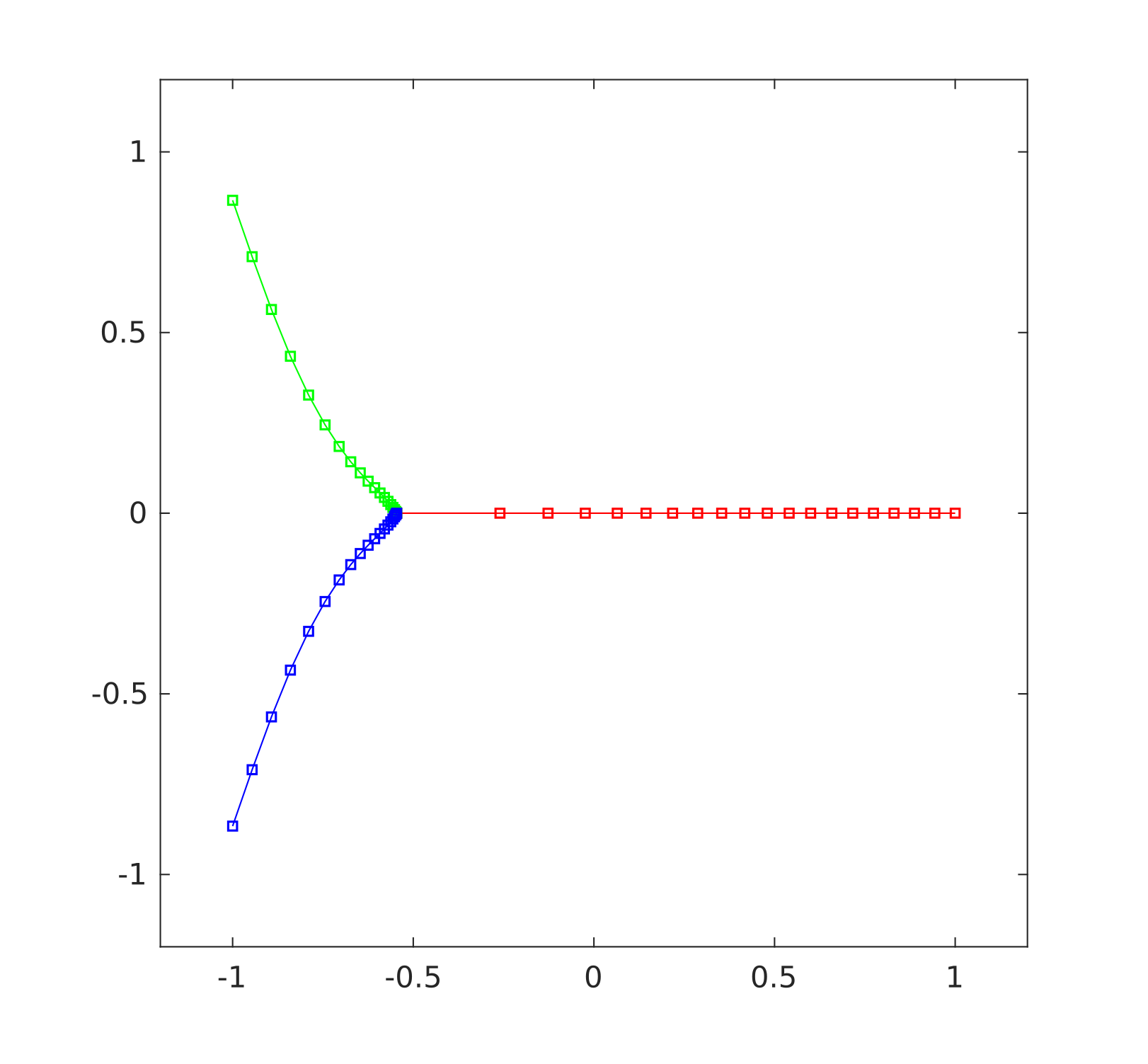} 
 \hfill 
 \includegraphics[width=5cm]{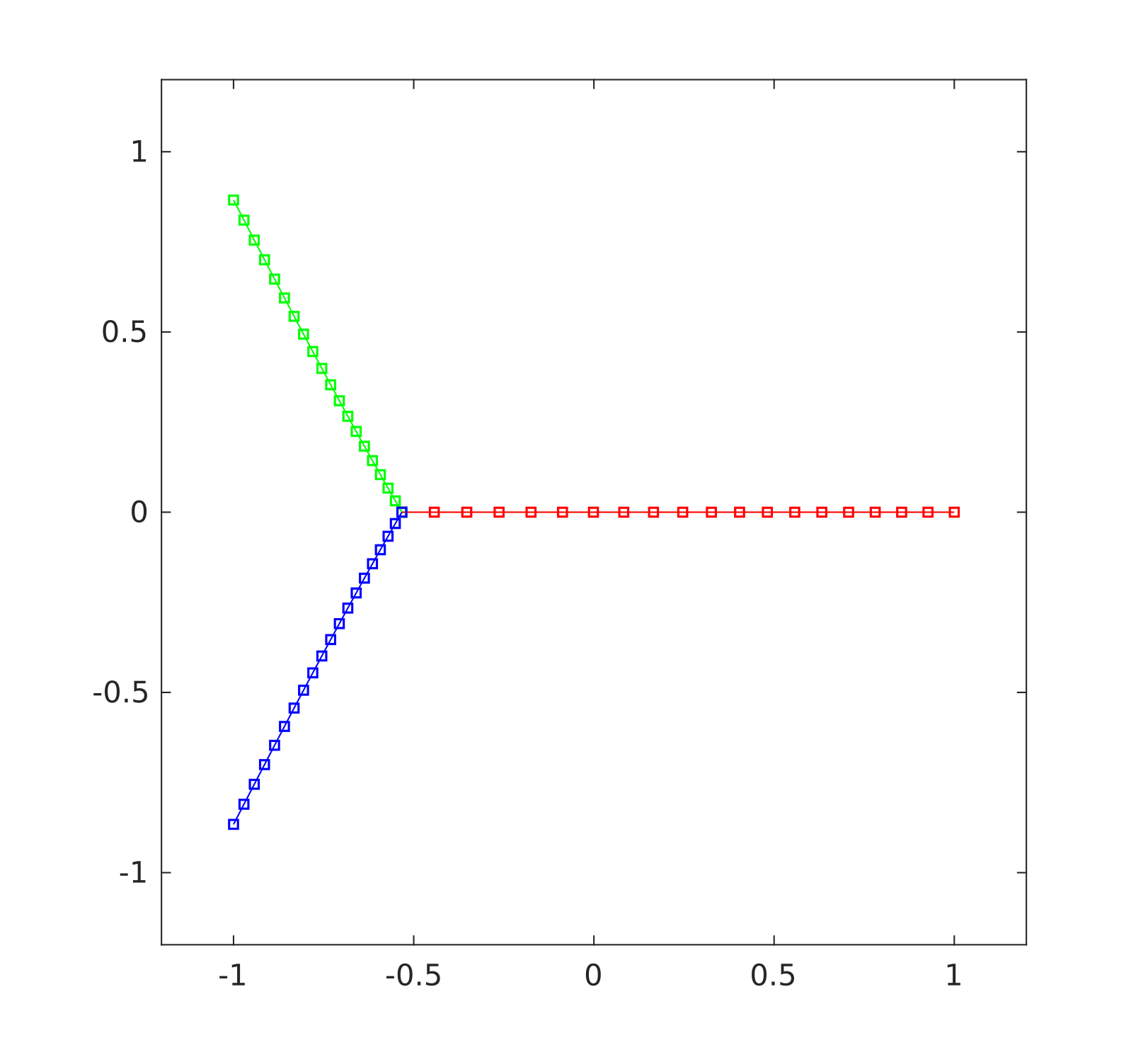} 
\end{center}
\caption{Numerical solution for the initial data given by \eqref{eq:ex_selfintersect_init} at times $t=0.0, 0.02, 0.05$ (top row, left to right), and $t=0.06, 0.07, 0.5$ (bottom row, left to right). The discretisation parameters were $J=20$ and $\delta = 0.0001$. See Subsection \ref{subsec:examples} for further details.} 
\label{fig:ex_selfintersect}
\end{figure}

Fully discrete semi-implicit schemes based on the ideas underpinning \eqref{eq:csf2} to re-distribute vertices are known to jump over singularities caused by self-intersecting curves, see \cite{DecDzi1994} (Figure 1), \cite{BarGarNue2007} (Figure 6), and \cite{EllFri2016} (Figure 8). We were wondering what happens if such self-intersecting curves are connected to a triple junction. We chose the initial data 
\begin{equation} \label{eq:ex_selfintersect_init}
 u^{(1)}_{0}(x) = \begin{pmatrix} x \\ 0 \end{pmatrix}, \quad u^{(2)}_{0}(x) = \begin{pmatrix} -x \\ b(x) \end{pmatrix}, \quad u^{(3)}_{0}(x) = \begin{pmatrix} x \\ -b(x) \end{pmatrix}, \qquad x \in \domain, 
\end{equation}
where $b(x) = \tfrac{3}{2} \sqrt{3} (x - \tfrac{1}{3})^2 - \sqrt{3}/2$. We chose $\epsilon = 10^{-3}$, $J=60$, and $\delta = 10^{-4}$. 

Figure \ref{fig:ex_selfintersect} gives an impression of the initial configuration and the evolution of the numerical solution. Between times $t=0.06$ and $t=0.07$ the topology changes and the self-intersection is lost. The scheme continues to relax the triod towards an equilibrium configuration. Note that the continuous problem develops a singularity so that Assumption~\ref{ass_cont_sol} is not satisfied and our theoretical result doesn't apply. The velocity becomes large around the topological change, which manifests by accumulation of vertices ($U^{(2),n}$, green, and $U^{(3),n}$, blue in Figure~\ref{fig:ex_selfintersect}) and streching of segments elsewhere ($U^{(1),n}$, red). Tangential re-distribution of vertices takes place at a slower pace after, which is visible comparing the last to images of Figure~\ref{fig:ex_selfintersect}. Whilst jumping over such singularities might be desired in some applications, detecting and accurately simulating them might be desired in others. This is likely to require adaptive time stepping and is left for future investigations.


\end{document}